\newtheorem{theorem}{\bf Theorem}[section]
\newtheorem{lemma}[theorem]{\bf Lemma}
\newtheorem{definition}[theorem]{\bf Definition}
\newtheorem{corollary}[theorem]{\bf Corollary}
\newtheorem{proposition}[theorem]{\bf Proposition}
\newtheorem{question}[theorem]{\bf Question}
\newtheorem{remark}[theorem]{\bf Remark}
\newcommand{\rme}{\mathrm{e}}
\newcommand{\rmi}{\mathrm{i}}
\newcommand{\rmd}{\mathrm{d}}
\newcommand{\defeq}{\mathrel{\mathop:}=}
\begin{document}

%%%% Article title to be placed here
\title{Real algebraic links in $S^3$ and braid group actions on the set of $n$-adic integers}

\author{%%%% Author details
Benjamin Bode}
\date{}

\address{Department of Mathematics, Osaka University, Toyonaka, Osaka 560-0043, Japan}
\email{ben.bode.2013@my.bristol.ac.uk}
%%%%%%%%% Insert author address here
%\address{Osaka
%ben.bode.2013@my.bristol.ac.uk
%}

%%%% Subject entries to be placed here %%%%
%\subject{Mathematical Physics, Topology}

%%%% Keyword entries to be placed here %%%%
%\keywords{braid group action, }
%%% good choices?

%%%% Insert corresponding author and its email address}
%\corres{Benjamin Bode, Mark Dennis\\
%\email{benjamin.bode@bristol.ac.uk}, \email{mark.dennis@bristol.ac.uk}}
%BB: wrote Benjamin instead of Ben
%%% mrd: final final version, from now on all edits require comment

%%%% Abstract text to be placed here %%%%%%%%%%%%
\maketitle
\begin{abstract}
We construct an infinite tower of covering spaces over the configuration space of $n-1$ distinct non-zero points in the complex plane. This results in an action of the braid group $\mathbb{B}_n$ on the set of $n$-adic integers $\mathbb{Z}_n$ for all natural numbers $n\geq 2$. We study some of the properties of these actions such as continuity and transitivity. The construction of the actions involves a new way of associating to any braid $B$ an infinite sequence of braids, whose braid types are invariants of $B$. We present computations for the cases of $n=2$ and $n=3$ and use these to show that an infinite family of braids close to real algebraic links, i.e., links of isolated singularities of real polynomials $\mathbb{R}^4\to\mathbb{R}^2$.  
\end{abstract}
%%%%%%%%%%%%%%%%%%%%%%%%%%%
%
%%%%%%%%%% Insert the texts which can accomdate on firstpage in the tag "fmtext" %%%%%

\section{Introduction}\label{sec:intro}
%%%% Insert A head here
Actions of a group on a set can offer insights both on properties of the group and symmetries of the set. In the case of the Artin braid group on $n$ strands $\mathbb{B}_n$ there is an additional motivation, since the elements of the group correspond to topological objects. One might hope that a braid group action contains information not only about the group structure, but also about the topological interpretation of its elements, say about the link type of their closures for example. By Markov's, Theorem elements of the braid group correspond to braids that close to equivalent links if and only if they are related through a sequence of  isotopies, conjugation and (de)stabilisation moves. Actions that contain information that does not change under conjugation and (de)stabilisation can therefore be used to construct link invariants. 

It should be noted that actions that are actually representations particularly lend themselves to this procedure as they offer several values that are invariant under conjugation, such as the determinant or the trace of the matrices.
This principle has been applied many times, such as in the case of the interpretation of the Alexander polynomial as a normalised determinant of the Burau representation \cite{burau} or the Jones polynomial as a normalised Markov trace of a representation of the braid group into a Temperley-Lieb Algebra \cite{jones}.

In this paper we are going to construct braid group actions on the $n$-adic integers $\mathbb{Z}_n$. While these do have an interesting algebraic structure, as well as being a prominent object of study in itself in number theory, we do not expect these actions to lead to representations of the braid group, since the actions do not respect the algebraic structure of $\mathbb{Z}_n$ and treat it as a set. However, they offer interesting connections between different aspects of the study of braid groups, linking group theoretic properties with the topology of certain configuration spaces and subsets of the space of complex polynomials.

Let $n$ be a natural number with $n\geq2$. Let $C_n$ be the space of monic complex polynomials $f\in \mathbb{C}[z]$ of degree $n$ with $n$ distinct roots, or equivalently, the configuration space of $n$ distinct unmarked points in the complex plane. Then $\pi_1(C_n)=\mathbb{B}_n$, where $\mathbb{B}_n$ denotes the braid group on $n$ strands. This article is to a large part about the consequences of a result by Beardon, Carne and Ng \cite{critical}, namely about the subset $Z_n\subset C_n$ of polynomials $f\in Z_n$ that have $n-1$ distinct non-zero critical values and a constant term equal to 0. Then the set of possible sets of critical values of such a polynomial is
\begin{equation}
V_n\defeq\{(v_1,v_2,\ldots,v_{n-1})\in(\mathbb{C}\backslash\{0\})^{n-1}: v_i\neq v_j \text{ if }i\neq j\}/S_{n-1},
\end{equation}
where $S_{n-1}$ is the symmetric group on $n-1$ elements.

\begin{theorem}
\label{thm:cover}
(Beardon-Carne-Ng \cite{critical}) The map $\theta_n:Z_n\to V_n$ that sends a polynomial $f\in Z_n$ to the set of its critical values $(v_1,v_2,\ldots,v_{n-1})$ is a covering map of degree $n^{n-1}$.
\end{theorem}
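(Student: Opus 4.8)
The plan is to recognize $\theta_n$ as the restriction of a single polynomial endomorphism of $\mathbb{C}^{n-1}$ and to extract both the covering property and the number $n^{n-1}$ from its weighted-homogeneous structure. First I would fix coordinates: writing $f(z)=z^n+a_{n-1}z^{n-1}+\dots+a_1z$ identifies the monic, constant-term-zero polynomials with $\mathbb{C}^{n-1}$, while the elementary symmetric functions $\sigma_1,\dots,\sigma_{n-1}$ of the critical values identify $V_n$ with the open set $\{(\sigma_1,\dots,\sigma_{n-1}):\operatorname{disc}\neq 0,\ \sigma_{n-1}\neq 0\}\subset\mathbb{C}^{n-1}$. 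Since $\prod_i(w-v_i)$ equals, up to a nonzero constant, the resultant $\operatorname{Res}_z(f(z)-w,f'(z))$, the $\sigma_j$ are genuine polynomials in the $a_i$. Hence $\theta_n$ is the restriction to $Z_n=\Theta^{-1}(V_n)$ of a polynomial map $\Theta\colon\mathbb{C}^{n-1}\to\mathbb{C}^{n-1}$.

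Next I would verify that $\theta_n$ is a local biholomorphism by computing its differential. For a tangent variation $\dot f(z)=\sum_{i=1}^{n-1}\dot a_i z^i$, the induced variation of a critical value $v_i=f(c_i)$ is $\dot v_i=\dot f(c_i)+f'(c_i)\dot c_i=\dot f(c_i)$, because $f'(c_i)=0$. Factoring $\dot f(z)=z\,h(z)$ with $\deg h\le n-2$, the differential is $h\mapsto\bigl(c_i\,h(c_i)\bigr)_{i=1}^{n-1}$. On $Z_n$ the $n-1$ critical points $c_i$ are distinct (since the critical values are distinct) and nonzero (since the critical values are nonzero), so this is a Lagrange-interpolation isomorphism followed by a nonzero diagonal scaling, hence invertible; thus $\theta_n$ is everywhere unramified.

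The analytic heart is properness, and here the key observation is a $\mathbb{C}^\ast$-symmetry: under $f(z)\mapsto t^{-n}f(tz)$ one has $a_i\mapsto t^{\,i-n}a_i$ and $v_i\mapsto t^{-n}v_i$, so that $\sigma_j\mapsto t^{-jn}\sigma_j$. Assigning weights $\operatorname{wt}(a_i)=n-i$ and $\operatorname{wt}(\sigma_j)=jn$ makes $\Theta$ weighted-homogeneous. I would then prove the zero-fiber lemma $\Theta^{-1}(0)=\{0\}$: if all critical values vanish, every root of $f'$ is a root of $f$, but counting the $k-1$ critical points strictly between the $k$ distinct roots (Gauss--Lucas/Rolle) shows these would be extra roots unless $k=1$, forcing $f=z^n$. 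A weighted-homogeneous polynomial map of affine spaces of equal dimension with $\Theta^{-1}(0)=\{0\}$ is finite and flat, so $\theta_n$ is proper; being a proper unramified map onto the connected, locally connected $V_n$, it is a covering. Its degree is the (constant) degree of $\Theta$, which by weighted Bézout is $\prod_{j=1}^{n-1}(jn)\big/\prod_{i=1}^{n-1}(n-i)=n^{n-1}(n-1)!/(n-1)!=n^{n-1}$.

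I expect the main obstacle to be exactly this properness step: spotting the weighted $\mathbb{C}^\ast$-action and establishing $\Theta^{-1}(0)=\{0\}$, after which the count $n^{n-1}$ falls out of the weighted degree formula. A secondary point requiring care is that restricting the finite flat map $\Theta$ over the ramification-free open set $V_n$ really does yield a covering of constant degree equal to the generic fiber cardinality, which is guaranteed by the everywhere-nonvanishing Jacobian established above.
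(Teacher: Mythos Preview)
The paper does not prove this theorem: it is stated in the introduction with attribution to Beardon--Carne--Ng, and Section~2 explicitly directs the reader to \cite{critical} for the proof. So there is no in-paper argument to compare against, and your proposal must be judged on its own merits.

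Your approach is sound and the degree count via weighted homogeneity is clean. The identification of $\theta_n$ as the restriction of a global polynomial map $\Theta:\mathbb{C}^{n-1}\to\mathbb{C}^{n-1}$, the differential computation (using $\dot v_i=\dot f(c_i)$ and Lagrange interpolation at the distinct nonzero $c_i$), and the weighted B\'ezout formula $\prod_{j=1}^{n-1}(jn)\big/\prod_{i=1}^{n-1}(n-i)=n^{n-1}$ are all correct. Two points need tightening. First, your justification of $\Theta^{-1}(0)=\{0\}$ invokes ``Gauss--Lucas/Rolle'' and critical points ``strictly between'' the roots; Rolle is a real-variable statement and does not apply here. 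The honest argument is algebraic: if $f$ has distinct roots $r_1,\dots,r_k$ with multiplicities $m_i$, then $f'/\prod_i(z-r_i)^{m_i-1}=\sum_i m_i\prod_{j\ne i}(z-r_j)$ has degree $k-1$ and is nonzero at each $r_\ell$, so its $k-1$ roots are critical points with nonzero critical values; if all critical values vanish this forces $k=1$, hence $f=z^n$. Second, you assert $Z_n=\Theta^{-1}(V_n)$ implicitly; this is true (a repeated root would be a critical point with critical value $0$, and a repeated critical point would make the critical-value set have fewer than $n-1$ elements), but since the paper's definition of $Z_n$ lists ``distinct roots'' as an independent condition, you should say so explicitly. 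With these two fixes the argument is complete.
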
 

We use this to prove the following result.

%links group theory of braids, topology of configuration spaces, complex polynomials, and $n$-adic integers
%braid group actions, hope to learn something about properties of group and set acted on, link invariants, known braid group actions.
\begin{theorem}
\label{thm:fibspace}
There is a tower of covering spaces
\begin{equation}
\label{eq:tower}
\ldots\to Z_n^{i+1}\to Z_n^{i}\to\ldots \to Z_n^{2}\to Z_n^{1}=Z_n\to V_n\simeq D_n\xrightarrow{p} C_n,
\end{equation} 
where $p$ is a covering map of degree $n$, all other arrows are covering maps of degree $n^{n-1}$ and $\simeq$ denotes homotopy equivalence. 
%There is a fiber space $Z$ over $C_n$ with fiber $\mathbb{Z}/n\mathbb{Z}\times\mathbb{Z}_{n^{n-1}}\cong\mathbb{Z}_n$.
\end{theorem}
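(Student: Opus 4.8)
The plan is to read the diagram from right to left: I would build the bottom two arrows by hand, invoke Theorem~\ref{thm:cover} for $Z_n\to V_n$, and then produce the infinite tower by an iterated pullback along a single naturally defined map.

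First I would construct $D_n$ and the covering $p$. Let $D_n$ be the configuration space of $n$ distinct points in $\mathbb{C}$ with one point marked, i.e. pairs $(\{w,u_1,\ldots,u_{n-1}\},w)$ consisting of an $n$-point subset of $\mathbb{C}$ together with a distinguished element. Forgetting the marking gives $p\colon D_n\to C_n$, and since each configuration in $C_n$ admits exactly $n$ choices of marked point, $p$ is an $n$-fold covering. To see $V_n\simeq D_n$ I would exhibit an explicit homeomorphism $D_n\cong\mathbb{C}\times V_n$ by translating the marked point to the origin: send $(\{w,u_1,\ldots,u_{n-1}\},w)$ to $(w,\{u_1-w,\ldots,u_{n-1}-w\})$. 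The translated points are distinct and nonzero, hence determine a point of $V_n$, and the inverse $(w,\{t_1,\ldots,t_{n-1}\})\mapsto(\{w,w+t_1,\ldots,w+t_{n-1}\},w)$ shows this is a homeomorphism. Composing with the projection onto the contractible factor $\mathbb{C}$ yields the homotopy equivalence $V_n\simeq D_n$.

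The key step is the tower above $Z_n$, for which I would use the map recording the nonzero roots. Every $f\in Z_n$ is monic of degree $n$ with $f(0)=0$ and $n$ distinct roots, so it factors as $f(z)=z\prod_{j=1}^{n-1}(z-r_j)$ with the $r_j$ distinct and nonzero; define $\rho\colon Z_n\to V_n$ by $\rho(f)\defeq\{r_1,\ldots,r_{n-1}\}$. This is continuous (indeed algebraic), since $f\mapsto f(z)/z$ is polynomial division by $z$ and passage to the unordered root set is continuous. I would then set $Z_n^1\defeq Z_n$ with its map $\theta_n\colon Z_n^1\to V_n$, and define recursively
\[
Z_n^{i+1}\defeq\{(f_1,\ldots,f_{i+1})\in (Z_n)^{i+1}: \theta_n(f_{j+1})=\rho(f_j)\text{ for }1\le j\le i\},
\]
with $Z_n^{i+1}\to Z_n^{i}$ the map forgetting the last coordinate $f_{i+1}$. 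Concretely, $Z_n^{i+1}$ is the pullback of the covering $\theta_n\colon Z_n\to V_n$ along the map $Z_n^{i}\to V_n$, $(f_1,\ldots,f_i)\mapsto\rho(f_i)$.

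Since a pullback of a covering map along an arbitrary continuous map is again a covering of the same degree, each $Z_n^{i+1}\to Z_n^{i}$ is a covering of degree $n^{n-1}$, the fibre over $(f_1,\ldots,f_i)$ being $\theta_n^{-1}(\rho(f_i))$, which has $n^{n-1}$ points by Theorem~\ref{thm:cover}. Together with $\theta_n$ of degree $n^{n-1}$ and $p$ of degree $n$ this assembles the whole diagram. I expect the main point to be conceptual rather than technical: the identification of $\rho$ as the correct map to iterate against $\theta_n$. The defining relation $\theta_n(f_{j+1})=\rho(f_j)$ matches the critical values of each polynomial with the nonzero roots of its predecessor, which is precisely the recursive structure that will later assign to a braid its infinite sequence of braids. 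The only routine verifications remaining are continuity of $\rho$, well-definedness of the fibre products, and the standard stability of coverings under pullback; if connected covers are desired one restricts to the relevant connected components, which does not affect the degrees.
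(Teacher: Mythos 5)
Your proposal is correct and follows essentially the same route as the paper: the same $D_n$ with the forgetful degree-$n$ covering, the same translation-to-the-origin argument for $V_n\simeq D_n$ (the paper writes out the explicit homotopy that your splitting $D_n\cong\mathbb{C}\times V_n$ encodes), and the same iteration of Theorem~\ref{thm:cover} against the nonzero-roots map $\rho$. The only cosmetic difference is that the paper defines $Z_n^{j+1}$ recursively as the subspace of $Z_n$ of polynomials whose critical-value polynomial lies in $Z_n^j$, with the covering given by restricting $\theta_n$, whereas you realise the same spaces as iterated fibre products; the tuple $(f_1,\ldots,f_{i+1})$ is determined by its last entry, so the two descriptions are canonically homeomorphic.
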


The fiber over a point $v\in V_n$ is the set of $n^{n-1}$-adic integers $\mathbb{Z}_{n^{n-1}}$.
Since $\pi_1(C_n)=\mathbb{B}_n$, we obtain a braid group action on the set $\mathbb{Z}/n\mathbb{Z}\times\mathbb{Z}_{n^{n-1}}$, which is homeomorphic to the $n$-adic integers $\mathbb{Z}_n$, via monodromy. We write this action as $\varphi_{n}(\cdot,B):\mathbb{Z}_n\to\mathbb{Z}_n$, $B\in \mathbb{B}_n$.

With very similar considerations we construct another action $\psi_n$ of the braid group on $\mathbb{Z}_{n^n}\cong\mathbb{Z}_n$ that shares some properties with $\phi_n$, but makes computations significantly simpler. 
The main properties of the constructed actions are summarised in the following theorem.

%by isometries $\phi_{n}(\cdot,B):\mathbb{Z}_n\to\mathbb{Z}_n$, $B\in \mathbb{B}_n$.

\begin{theorem}
\label{thm:main}
For both of the constructed actions $\phi_n$ and $\psi_n$ the following are true.
\begin{enumerate}[i)]
\item They preserve the metrics on $\mathbb{Z}/n\mathbb{Z}\times\mathbb{Z}_{n^{n-1}}$ and $\mathbb{Z}_{n^n}$, respectively. Therefore they yield continuous braid group actions on $\mathbb{Z}_n$.
\item They correspond to sequences of homomorphisms $\mathbb{B}_n\to S_{n\times(n^{n-1})^j}$ and $\mathbb{B}_n\to S_{(n^n)^j}$, respectively. For $\phi_n$ the resulting action on $n\times(n^{n-1})^j$ points is transitive for all $n$ and $j$.
\item The kernels $N_j$ and $H_j$ of the homomorphisms $\mathbb{B}_n\to S_{n\times(n^{n-1})^j}$ and $\mathbb{B}_n\to S_{(n^n)^j}$, respectively, form descending series of normal subgroups of the braid group that do not stabilise.
%\item The actions are faithful or the intersections $\bigcap_j N_j$, $\bigcap_j M_j$ and $\bigcap H_j$, respectively, are non-trivial.  
\end{enumerate}
\end{theorem}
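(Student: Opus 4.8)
The plan is to derive all three parts from the single structural fact that \eqref{eq:tower} is a tower of covering maps, so that the monodromy at each level is compatible with the projection to the level below. Throughout I regard $\mathbb{Z}_{n^{n-1}}=\varprojlim_j \mathbb{Z}/(n^{n-1})^j\mathbb{Z}$, the fiber over a basepoint being the inverse limit of the finite fibers of the composite covers $Z_n^{j}\to\cdots\to C_n$, and similarly for the $\psi_n$ tower with $\mathbb{Z}_{n^n}$.

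For part (i) I would first record the elementary but crucial compatibility: if $q\colon E'\to E$ is a map of covering spaces over a common base $B$, then for every loop $\gamma$ the monodromy permutations satisfy $q\circ\mu_{E'}(\gamma)=\mu_E(\gamma)\circ q$ on fibers, since path lifting is natural. Applied to each projection $Z_n^{j+1}\to Z_n^{j}$ (and to $p$), this shows that the monodromy of a braid $B$ commutes with every truncation map $\mathbb{Z}_{n^{n-1}}\to\mathbb{Z}/(n^{n-1})^j\mathbb{Z}$, and likewise intertwines the two factors of $\mathbb{Z}/n\mathbb{Z}\times\mathbb{Z}_{n^{n-1}}$ with their finite quotients. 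Hence if two points agree to depth $j$ they are sent to two points that again agree to depth $j$, so $\phi_n(\cdot,B)$ is non-expanding for the ultrametric; applying the same to $B^{-1}$ shows it is an isometry. Under the digit-concatenation homeomorphism $\mathbb{Z}/n\mathbb{Z}\times\mathbb{Z}_{n^{n-1}}\cong\mathbb{Z}_n$ (respectively $\mathbb{Z}_{n^n}\cong\mathbb{Z}_n$) an isometry is in particular continuous, which gives (i).

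For part (ii), the homomorphisms are just the monodromy representations of the finite covers: the total space over $C_n$ at level $j$ has fiber of cardinality $n\cdot(n^{n-1})^j$ (degrees multiply along a tower, independently of connectivity), giving $\mathbb{B}_n\to S_{n\times(n^{n-1})^j}$, and the tower compatibility from (i) makes these into a compatible sequence; the $\psi_n$ case is identical with $(n^n)^j$. Transitivity of the $\phi_n$ action on the level-$j$ set is equivalent to path-connectedness of that finite cover, so the real content is to prove each of these covers is connected. I would do this by induction on $j$: the base case is connectedness of the total level-$1$ cover $Z_n$ over $C_n$ (through $V_n\simeq D_n$ and $p$), and the inductive step shows that $\pi_1$ of the connected level-$j$ total space still surjects onto a transitive subgroup of the symmetric group on the new degree-$n^{n-1}$ fiber introduced by $Z_n^{j+1}\to Z_n^{j}$. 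This is the step I expect to be the main obstacle, since it requires an honest description of the lifting rule defining the tower rather than formal covering-space theory; the explicit monodromy computations for $n=2,3$ are presumably the model for the general generating-set argument.

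For part (iii), normality and the descending inclusions are formal: $N_j$ and $H_j$ are kernels of homomorphisms, hence normal, and since the level-$(j+1)$ representation refines the level-$j$ one by tower compatibility, any braid acting trivially at level $j+1$ acts trivially at level $j$, giving $N_{j+1}\subseteq N_j$ and $H_{j+1}\subseteq H_j$. Non-stabilisation for $\phi_n$ then follows for free from the transitivity in (ii): if $N_j=N_{j+1}=\cdots$ for $j\ge j_0$, then every level factors through the fixed finite group $\mathbb{B}_n/N_{j_0}$, yet this finite group acts transitively on the $n(n^{n-1})^j$-element level-$j$ set, forcing $|\mathbb{B}_n/N_{j_0}|\ge n(n^{n-1})^j$ for all $j$, a contradiction. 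Since transitivity is not available for $\psi_n$, for $H_j$ I would instead exhibit, for each $j$, an explicit braid lying in $H_j\setminus H_{j+1}$ using the recursive description of the action (this is where the computations of the later sections do genuine work), or equivalently show directly that each covering map in the $\psi_n$ tower contributes a nontrivial permutation of some fiber that is invisible at the previous level.
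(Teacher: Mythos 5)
Your part (i) is correct and is essentially the paper's own argument (Proposition \ref{prop:affine}): compatibility of the monodromy with the tower projections means points agreeing to depth $j$ have images agreeing to depth $j$, and applying the inverse braid upgrades ``non-expanding'' to ``isometry.'' The formal portions of (ii) and (iii) — the homomorphisms, normality, the descending inclusions, and the deduction of non-stabilisation of $N_j$ from transitivity via unbounded index — also match the paper (Lemma \ref{stabilize}). However, there are two genuine gaps, and they are exactly the two places where you defer the work. First, you reduce transitivity of $\phi_n$ at level $j$ to path-connectedness of $Z_n^j$ but never prove connectedness: you call the inductive step ``the main obstacle'' and gesture at a generating-set argument modeled on the $n=2,3$ computations. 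The paper's proof (Proposition \ref{prop:trans}) is not a monodromy computation at all; it is a direct geometric argument: the points of $V_n$ not corresponding to points of $Z_n$ form the zero locus of finitely many holomorphic functions (none identically zero), so on any complex affine line these bad points are isolated, and any path in $Z_n^{j-1}$ can be pushed off them segment by segment; this deformation then lifts through the covering maps to produce a path in $Z_n^j$. Without this (or a substitute) your part (ii) transitivity claim, and hence your non-stabilisation of $N_j$, is unproven.

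Second, for $H_j$ you propose to exhibit, for each $j$, a braid in $H_j\setminus H_{j+1}$ ``using the recursive description of the action'' and the later computations — but those computations exist only for $n=2$ and $n=3$ (and only for $n=2$ do they yield faithfulness, Proposition \ref{prop:free}), so this route cannot cover general $n$ as stated. The paper's argument (Proposition \ref{prop:hj}) is structural and rests on two lemmas you do not have: the $\mathrm{e}^{2\pi\mathrm{i}/n}$-rotation symmetry of fibres of $\theta_n$ produces a base point $v\in V_n$ with two distinct preimages $x,y\in Z_n$ both having $0$ in the $\lfloor\frac{n+1}{2}\rfloor^{\text{th}}$ position (Lemma \ref{apple}), and any braid whose permutation fixes that strand can be realised by a path in $Z_n$ with prescribed endpoints (Lemma \ref{pathequal}, itself a by-product of the connectedness argument above). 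Iterating these two facts yields $2^j$ braids inducing pairwise distinct permutations at level $j$, so $H_j$ has index at least $2^j$ and the sequence cannot stabilise. Both of your gaps trace to the same missing ingredient: concrete geometric input about $Z_n$ — its connectedness and the flexibility of paths within it — which the purely formal covering-space and tower-compatibility reasoning you rely on cannot supply.
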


Regarding the last point we would like to point out that a proof of the faithfulness of the actions would be desirable, which is equivalent to the intersections $\bigcap_j N_j$ and $\bigcap_j H_j$, respectively, being trivial. %But even if this could not be achieved, the existence and construction of a descending series of normal subgroups whose intersection is non-trivial would be interesting as well, since the author is not aware of any examples of this.

%\begin{corollary}
%\label{cor:main}
%Let $\ell=n^{n-1}$. The infinite tower of covering spaces in Eq. (\ref{eq:tower}) induces a continuous action of $\mathbb{B}_n$ on the set $\mathbb{Z}/n\mathbb{Z}\times\mathbb{Z}_{\ell}\cong\mathbb{Z}_{n}$ by isometries $\phi_{n,B}:\mathbb{Z}_n\to\mathbb{Z}_n$, $B\in \mathbb{B}_n$.
%\end{corollary}

The remainder of this paper is structured as follows. In Section \ref{sec:background} we provide the reader with the necessary background on the connection between polynomial maps and braids as well as the necessary definitions and basic results from the theory of $n$-adic integers. In Section \ref{sec:actions} we construct the actions $\phi_n$ and $\psi_n$ and prove Theorem \ref{thm:fibspace} and Theorem\ref{thm:main} \textit{i)}.

The definition of $\psi_n$ involves the construction of a sequence of braids 
\begin{equation}
(B,\{B_{1,0},B_{1,1},\ldots,B_{1,n^n-1}\},\{B_{2,0},B_{2,1},\ldots,B_{2,(n^n)^2-1}\},\ldots,\{B_{j,0},B_{j,1},\ldots,B_{j,(n^n)^j-1}\},\ldots),
\end{equation}
which is an invariant of the braid $B$. We outline in Section \ref{sec:seq} how sequences like this can be used to improve invariants of braids or conjugacy classes, i.e., make the invariants better at distinguishing different braids (or different conjugacy classes of braids).
In Section \ref{sec:comps} we compute the effect of $\psi_n(\cdot,\sigma_i)$ on the first coordinates of $\mathbb{Z}_{n^n}$ for the generators $\sigma_i$ of the braid groups on two and three strands. The results can be used to compute the effect of $\psi_n(\cdot,B)$ on any fixed number of coordinates for any braid $B$ on two or three strands in a computational complexity that is only linear in the length of $B$. 
In Section \ref{sec:dynamics} we prove Theorem \ref{thm:main} \textit{ii)} and take a brief look at the orbit of points under the repeated application of $\theta_n$ and $\theta_n^{-1}$ for $n=2,3$.
 Section \ref{sec:normal} studies the sequences of the normal subgroups that are given by the kernels of the in the statement of Theorem \ref{thm:main} \textit{ii)} and we show that the sequences of normal subgroups do not stabilise, which proves Theorem \ref{thm:main} \textit{iii)}.
In Section \ref{sec:realalg} we employ the computations from Section \ref{sec:comps} to show that an infinite family of braids close to real algebraic links, i.e., links of isolated singularities of real polynomials $\mathbb{R}^4\to\mathbb{R}^2$. We obtain the following result. 
%Since the actions on $\mathbb{Z}_{n^{n-1}}$, $\mathbb{Z}/n\mathbb{Z}\times\mathbb{Z}_{n^{n-1}}$ and $\mathbb{Z}_{n^n}$ are by isometries, they restrict to actions on $\mathbb{Z}/(n^{n-1})^j\mathbb{Z}$, $\mathbb{Z}/(n\times(n^{n-1})^j)\mathbb{Z}$ and $\mathbb{Z}/(n^n)^j\mathbb{Z}$ respectively for all $j$. In Section \ref{sec:dynamics} we show that while none of the constructed actions is transitive on all of $\mathbb{Z}_{(n^{n-1})^j}\cong\mathbb{Z}_n$ and $\mathbb{Z}/n\mathbb{Z}\times\mathbb{Z}_{n^{n-1}}\cong\mathbb{Z}_n$, $\varphi_n$ and $\phi_n$ are transitive when restricted to $\mathbb{Z}/(n^{n-1})^j\mathbb{Z}$ and $\mathbb{Z}/(n\times (n^{n-1})^j)\mathbb{Z}$ for all $j$. We also take a brief look at the orbit of points under the repeated application of $\theta_n$ and $\theta_n^{-1}$ for $n=2,3$. Note that the restrictions of the actions to $\mathbb{Z}/(n^{n-1})^j\mathbb{Z}$, $\mathbb{Z}/(n\times (n^{n-1})^j)\mathbb{Z}$ and $\mathbb{Z}/(n^n)^j\mathbb{Z}$ correspond to sequences of homomorphisms from the (affine) braid group to the symmetric group $S_{(n^{n-1})^j}$, $S_{n\times(n^{n-1})^j}$ and $S_{(n^n)^j}$ respectively. Section \ref{sec:normal} studies the sequences of the normal subgroups that are given by the kernels of these homomorphisms and we show that the sequences of normal subgroups do not stabilize. In Section \ref{sec:realalg} we employ the computations from Section \ref{sec:comps} to show that an infinite family of braids close to real algebraic links, i.e., links of isolated singularities of real polynomials $\mathbb{R}^4\to\mathbb{R}^2$.\\
%\ \\
\begin{theorem}
\label{thm:real}
%Let $B$ be a 3-strand braid of the form $\prod_{j=1}^k\sigma_1^{2k_j}\sigma_2^{-1}\sigma_1^{l_j}\sigma_2$, where all $k_j$, $l_j$ are non-zero integers with $sign(k_i)=sign(k_j)$ and $sign(l_i)=sign(l_j)$ for all $i,j=1,2,\ldots,k$. Then the closure of $B^2$ is real algebraic.
Let $\epsilon\in\{\pm 1\}$ and let $B=\prod_{j=1}^{\ell}w_{i_j}^{\epsilon}$ be a 3-strand braid with
\begin{align}
&{\makebox[3.5cm][l]{$w_1=\sigma_2$,}}{\makebox[3cm][l]{$w_2=\sigma_1^2$,}}{\makebox[2.5cm][l]{$w_3=(\sigma_1\sigma_2\sigma_1)^2$,}}\nonumber\\
&{\makebox[3.5cm][l]{$w_4=(\sigma_2\sigma_1\sigma_2^{-1}\sigma_1\sigma_2)^2$,}}{\makebox[2cm][l]{$w_5=\sigma_2^{-1}\sigma_1\sigma_2\sigma_2\sigma_1$,}}
%&w_1=\sigma_2, &w_2=\sigma_1^2, &w_3=(\sigma_1\sigma_2\sigma_1)^2,\nonumber\\
%&w_4= (\sigma_2\sigma_1\sigma_2^{-1}\sigma_1\sigma_2)^2, &w_5=\sigma_2^{-1}\sigma_1\sigma_2\sigma_2\sigma_1 &,
\end{align}
and such that there is a $j$ such that $i_j=3$ or such that there is only one residue class $k \text{ mod }3$ such that $i_j\neq k$ for all $j=1,2,\ldots,\ell$.
Then the closure of $B^2$ is real algebraic.
\end{theorem}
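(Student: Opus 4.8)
The plan is to exhibit, for each braid $B$ of the stated form, an explicit loop of polynomials $g_t\in Z_3$, $t\in[0,2\pi]$, parametrizing $B^2$ whose critical values move with strictly monotone argument, and then to invoke the criterion that such ``P-fibered'' braids close to real algebraic links. First I would record the criterion in the form most useful here. If a braid $\beta$ is realized by a loop $g_t$ in $Z_n$ whose $n-1$ critical values $v_i(t)$ satisfy $\frac{d}{dt}\arg v_i(t)\neq 0$ for all $t$ and all $i$, then the only candidate critical points of $\arg g_t(z)$ on the complement of the braid in the solid torus lie over the critical points of $g_t$, and there the extra condition $\operatorname{Im}(\dot v_i/v_i)=0$ is never met; hence $\arg g_t(z)$ has no critical points and defines a fibration. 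By the construction underlying the results of Section~\ref{sec:realalg}, this realizes the closure of $\beta$ as the link of an isolated singularity of a real polynomial map $\mathbb{R}^4\to\mathbb{R}^2$. This reduces the theorem to the braid-theoretic statement that $B^2$ admits such a monotone representative.

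Second, I would translate monotonicity of the critical-value arguments into the language of the action $\psi_3$. The covering $\theta_3\colon Z_3\to V_3$ of Theorem~\ref{thm:cover} is exactly the map recording critical values, so a loop representing $\beta$ in $C_3$ lifts, through $V_3\simeq D_3\to C_3$ and up the tower, to a path whose combinatorics are governed by $\psi_3$. The first coordinate computed in Section~\ref{sec:comps} encodes the cyclic ordering of the arguments $\arg v_i(t)$ together with the sheet of the covering, and a generator $\sigma_i^{\pm}$ is realizable by a monotone elementary motion precisely when its recorded effect on this coordinate is the corresponding transposition in the argument-increasing direction. I would then read off from the tables of Section~\ref{sec:comps} that each of the five words $w_1,\ldots,w_5$, raised to the power $\epsilon$, is realizable by such a monotone motion, so that each $w_k^{\epsilon}$ is individually P-fibered and can in principle be concatenated with the others without forcing any critical value to reverse its argument.

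Third, and this is the heart of the matter, I would show that the concatenation $\prod_j w_{i_j}^{\epsilon}$, traversed twice, stays monotone. Each elementary block advances the arguments of the critical values, and the only danger is that at a junction between two blocks a critical value is handed off with momentarily stationary argument, creating a critical point of $\arg g_t$. The two hypotheses are exactly the combinatorial conditions preventing this: the block $w_3=(\sigma_1\sigma_2\sigma_1)^2$ is the full twist on three strands, which rotates the whole configuration, and hence every critical value, by $2\pi$, thereby ``reseeding'' the arguments with uniformly positive velocity; and confining the indices to two residue classes mod $3$ keeps one critical value permanently on the advancing side, so that the bookkeeping of Section~\ref{sec:comps} never returns a stationary coordinate at a junction. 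Passing to $B^2$ rather than $B$ forces the net argument change of each critical value to be a positive multiple of $2\pi$, so the monotone arcs close up into a genuine loop over the solid torus.

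The main obstacle is precisely this third step: verifying global monotonicity of the composite critical-value motion. Individual P-fiberedness of the $w_k^{\epsilon}$ is a finite check against Section~\ref{sec:comps}, but closure under concatenation is not automatic, and the exact role of the two residue-class hypotheses must be extracted from how $\psi_3(\cdot,\sigma_i)$ permutes the first coordinate. I expect this to reduce to a case analysis on the residue classes of consecutive indices $i_j$, treating the presence of a $w_3$ factor and the two-residue-class situation separately, and showing in each case that no critical value is left with zero argument-velocity at a block junction.
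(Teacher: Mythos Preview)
Your overall framework—realize $B$ by a loop in $Z_3$ whose critical values have monotone argument and invoke Proposition~\ref{prop:summary}—is the paper's, but two points are off.

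First, the direction is inverted. The paper does not start from the $w_i$ and verify P-fiberedness from the tables. It starts downstairs in $V_3$ with five affine braids $\beta_1=\sigma_1^2$, $\beta_2=\sigma_2^{-1}\sigma_1^2\sigma_2$, $\beta_3=\sigma_1^2\sigma_2$, $\beta_4=\sigma_1^2\sigma_2^2$, $\beta_5=\sigma_2\sigma_1^2\sigma_2$, each of which visibly admits a parametrization with $\tfrac{\partial}{\partial t}\arg v_i(t)\geq 0$. It then uses the Section~\ref{sec:comps} data to find the smallest power of each $\beta_i$ whose lift starting at $z_1$ closes up, and reads that lift off as a root braid: these \emph{are} the $w_i$. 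So the $w_i$ are outputs of the computation, already equipped with monotone critical-value motion by construction; there is nothing to check from the tables in the direction you propose.

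Second, and this is the genuine gap, your reading of the index hypothesis is wrong. Concatenating weakly monotone parametrizations is already weakly monotone; there is no ``junction'' obstruction of the kind you describe. The actual issue is that some $\beta_i$ leave one of the two critical-value strands \emph{stationary for all time}: $\beta_1,\beta_4$ (indices $\equiv 1\bmod 3$) move only one strand, $\beta_2,\beta_5$ (indices $\equiv 2\bmod 3$) move only the other, while $\beta_3$ (index $\equiv 0\bmod 3$) moves both. If the concatenated word leaves a strand permanently fixed, no small perturbation can make $\tfrac{\partial}{\partial t}\arg v_i$ strictly nonzero everywhere. The hypothesis ``some $i_j=3$, or exactly one residue class mod $3$ is omitted'' says precisely that both strands move somewhere, after which a small deformation yields strict monotonicity. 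Your interpretation of $w_3$ as a full twist ``rotating every critical value by $2\pi$'' also conflates the root braid $w_3$ with its critical-value projection $\beta_3^6$; these live on different floors of the tower, and it is the latter whose geometry matters here.
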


We show that this family contains links that have not been known to be real algebraic previously.

\textbf{Acknowledgements:} The author is grateful to Andreas Bode, Tara Brendle, Mark Dennis, Daniel Peralta-Salas, Jonathan Robbins, Dale Rolfsen and Osamu Saeki for valuable discussions and encouragements. The author would also like to express his gratitude towards Seiichi Kamada for comments on earlier drafts of the paper and to the referees, whose comments have significantly improved the paper. The author was supported by the Leverhulme Trust Research Programme Grant RP2013-K-009, SPOCK: Scientific Properties Of Complex Knots and by JSPS KAKENHI Grant Number JP18F18751 and
a JSPS Postdoctoral Fellowship as JSPS International Research Fellow.
Some of the computations in Section 5 were performed during a research visit at the ICMAT in Madrid, which was made possible by the generosity of Daniel Peralta-Salas and the ERC grant 335079.

\section{Background}
\label{sec:background}
In this section we summarise the necessary background on $n$-adic integers, polynomials and their relation to braids, as well as Beardon, Carne and Ng's covering map from \cite{critical}. Proofs and more detailed descriptions can be found in \cite{critical}, \cite{bode:2016lemniscate}, \cite{bode:2016polynomial} and \cite{bode:thesis}, while the basics of $n$-adic integers are available in countless number theory texts such as \cite{adicbook} or \cite{neukirch}.

\subsection{The $n$-adic integers}
\label{sec:adic}
We only need the very basics of the theory of $n$-adic integers $\mathbb{Z}_{n}$. They are defined as the inverse limit $\underset{j}{\varprojlim}\mathbb{Z}/n^{j}\mathbb{Z}$ for $j\geq 1$. Therefore it is the set of infinite sequences
\begin{equation}
\label{eq:adic}
a=(a_1,a_2,a_3,\ldots)
\end{equation}
such that $a_j\in\mathbb{Z}/n^{j}\mathbb{Z}$ and  $a_{j+1}=a_j\text{ mod }n^{j}$ for all $j\geq 1$.

We can put an $n$-adic valuation (and with it a metric and a topology) on $\mathbb{Z}_{n}$ by defining
\begin{equation}
ord_{n}(a)=\min\{k\geq 1:a_j= 0\text{ for all }j< k\}.
\end{equation}
The order of the 0-sequence $ord_n(0)$ is defined as $\infty$. 
The $n$-adic value of $a=(a_1,a_2,a_3,\ldots)$ is
\begin{equation}
|a|_{n}=n^{-ord_{n}(a)}.
\end{equation} 
This means an $n$-adic integer is considered small if its sequence starts with a lot of zeros. Consequentially, two $n$-adic integers are considered close to each other if their sequences start with many identical terms. The topology that is induced by this metric is simply the profinite topology.

%The integers $\mathbb{Z}$ embed into $\mathbb{Z}_n$ by sending $m\in\mathbb{Z}$ to the sequence $(m\text{ mod }n, m\text{ mod }n^2,\ldots)$.

%The $n$-adic integers form an abelian group. They are in fact rings (possibly with zero divisors), but we are not really going to need these facts. There are several equivalent definitions of $\mathbb{Z}_n$. We could for example define it as the set of infinite sequences of elements in $\mathbb{Z}/n\mathbb{Z}$. This is because in Equation (\ref{eq:adic}) there are $n$ choices for $a_1$ and while in principle there are $n^i$ choices for $a_i$, only $n$ of them satisfy the condition of $a_{i+1}=a_i\text{ mod }n^{i}$. The way to move between these two definitions is to interpret a sequence of elements $b_i$ in $\mathbb{Z}/n\mathbb{Z}$ as the coefficients of an infinite formal expansion in powers of $n$,
%\begin{equation}
%b_1+b_2n+b_3n^2\ldots,
%\end{equation}
%and see the corresponding sequence of elements $a_i$ in $\mathbb{Z}/n^i\mathbb{Z}$ as the mod-$n^{i}$ reduction of that expansion
%\begin{equation}
%a_i=\sum_{j=1}^{\infty}b_jn^{j-1}\text{ mod }n^i.
%\end{equation}

If $n=\prod_{i=1}^{m}p_i^{k_i}$ with $p_i$ prime, $p_i\neq p_j$ for all $i\neq j$ and $k_i\geq 1$, then we have a homeomorphism
\begin{equation}
\label{eq:iso}
\mathbb{Z}_{n}\cong\mathbb{Z}_{\prod_{i=1}^{m}p_i}\cong \mathbb{Z}_{p_1}\times\mathbb{Z}_{p_2}\times\ldots\times\mathbb{Z}_{p_m}.
\end{equation}

The set of the $n$-adic integers is also homeomorphic to the Cantor set $\Omega$ for all $n$. Since the actions in Theorem \ref{thm:main} are actions on the set of $n$-adic integers and do not respect the algebraic structure of that set, we could regard them as actions on $\Omega$ instead. In the light of Theorem \ref{thm:main}i) it will be more appropriate to regard them as actions on the metric spaces $\mathbb{Z}/n\mathbb{Z}\times\mathbb{Z}_{n^{n-1}}$ and $\mathbb{Z}_{n^n}$, respectively.
%This (and the existence of zero-divisors) is one explanation why in most cases we are only interested in the case where $n$ is a prime. In this paper however, we are less concerned with the arithmetic and algebraic properties of $\mathbb{Z}_{n}$, but instead focus on the definition as the limit of an inverse system. Furthermore, in our case the number $n$ is going to be related to the degree of a polynomial or the number of strands of a braid, so from a geometric viewpoint it is not advisable to factor $n$ into primes.

% A possible basis of the open sets on $\mathbb{Z}_{n}$ is given by
%\begin{equation}
%U_{k}(m)=\{a\in\mathbb{Z}_{n}:|a-m|_{n}\leq n^{-k}\}, \quad k\in\mathbb{Z}_{\geq0}, m\in\mathbb{Z}\cap[1,n^k].
%\end{equation} 

%Note that with this topology the group isomorphisms in Equation (\ref{eq:iso}) are isomorphisms of topological groups.

\subsection{Polynomials, braids and the covering map}
We discuss different ways in which a loop in a space of polynomials can be interpreted as a braid.

\textbf{Example:} Let $C_n$ be the space of monic complex polynomials in one variable of degree $n$ and with $n$ distinct roots.
Consider a parametrised family of polynomials $f_t\in C_n$, $t\in[0,1]$. Alternatively, this can be written as a map $f:\mathbb{C}\times[0,1]\to\mathbb{C}$, $(z,t)\mapsto f_t(z)$, which is a polynomial in $z$. This is a Weierstrass polynomial as discussed in the context of braids in \cite{hansen}, \cite{hansenbook} and \cite{moller}. 
Since $f_t$ has $n=n_t=\deg f_t$ distinct roots for every $t\in[0,1]$, the nodal set $f^{-1}(0)$ forms a braid on $n$ strands in $\mathbb{C}\times[0,1]$. The underlying principle is the fundamental theorem of algebra that allows us to identify a monic polynomial with distinct roots with its (unordered) set of roots. The map that sends a polynomial in $C_n$ to its roots gives a homeomorphism $C_n\cong \{(z_1,z_2,\ldots,z_n)\in\mathbb{C}^n:z_i\neq z_j \text{ if }i\neq j\}/S_{n}$, where $S_{n}$ is the symmetric group on $n$ elements. This means that the fundamental group of the space of monic complex polynomials of a fixed degree $n$ with distinct roots is $\mathbb{B}_n$, with homotopies of loops in $C_n$ corresponding to braid isotopies. For a path in $C_n$ given by $f_t$, $t\in[0,1]$, we will frequently refer to the braid that is formed by the roots of $f_t$, $t\in[0,1]$ as the braid corresponding to the path $f_t$.\\

This is of course well-known, but what is less often considered is the possibility of replacing the concept of `roots' in the above construction by other sets such as the critical points or critical values of the polynomials.\\

\textbf{Example:} Instead of demanding that each $f_t$ has $n$ distinct roots, we can require that $f_t$ has $n-1$ distinct critical points $c_1,c_2,\ldots,c_{n-1}$ with $f_t'(c_i)=\tfrac{\partial f_t}{\partial z}(c_i)=0$ for all $i$. Just like above we get a homeomorphism between a space of polynomials (monic with fixed degree $n$, distinct critical points and with constant term equal to 0) and the configuration space of $n-1$ distinct complex numbers, which tells us that the fundamental group of that space of polynomials is the braid group $\mathbb{B}_{n-1}$. Restricting the space of polynomials to those with a constant term equal to zero was necessary to get a map that is 1-to-1. We might argue that this is not really different from the construction above. We merely used the fact that every polynomial has a unique antiderivative once the integration constant is fixed (i.e., the constant term is set equal to 0) and then employed the homeomorphism between $C_{n-1}$ and the corresponding configuration space.\\

It is therefore easy to construct parametrised families of polynomials $f_t\in\mathbb{C}[z]$, $t\in [0,1]$ whose nodal sets or whose sets of critical points form a given braid. In the first case, we only have to find a parametrisation 
\begin{equation}
\bigcup_{j=1}^s (z_j(t),t) \subset \mathbb{C}\times [0,1]
\end{equation} 
of the braid on $s$ strands and define $f_t(z)=\prod_{j=1}^s (z-z_j(t))$. Here the polynomial degree $n$ equals the number of strands $s$ and the roots trace out the desired braid as $t$ varies from 0 to 1.

In the latter case, we obtain $f_t$ via
\begin{equation}
f_t(z)=\int_0^z \prod_j^{s} (w-z_j(t)) \rmd w. 
\end{equation} 
Note that here each $f_t$ has degree $n=s+1$ and the critical points trace out the desired braid as $t$ varies from 0 to 1.\\

\textbf{Example:} Suppose now we are not interested in the topology of the nodal set or that of the critical set of a family of polynomials, but instead in the topology of the set of critical values, i.e., the values $f_t(c_i)$ of the polynomials at their critical points $c_i$. Similarly to before, if we demand that the critical values $(v_1,v_2,\ldots,v_{n-1})=(f(c_1),f(c_2),\ldots,f(c_{n-1}))$ of a monic polynomial (of degree $n$ and with constant term equal to zero) are disjoint, the map that sends such a polynomial to its set of critical values allows us to interpret loops in the space of such polynomials as braids. However, this map is not a homeomorphism and the problem of constructing polynomials for a given braid parametrisation becomes more challenging. 

Given a parametrisation
\begin{equation}
\bigcup_{j=1}^s (v_j(t),t) \subset \mathbb{C}\times [0,1]
\end{equation}
of a braid $B$ we want to construct a $f(z,t)=f_t(z)$ such that the critical values of $f_t$ form the braid $B$, i.e., we want the existence of $c_1(t),c_2(t),\ldots,c_s(t)$ such that $f(c_j(t),t)=v_j(t)$ and $f'(c_j(t),t)=0$ for all $j=1,2,\ldots,s$ and all $t$. 

We find such a family of polynomials $f_t$ for a given braid parametrisation by solving a system of polynomial equations for every $t\in[0,1]$, which is not very practical. The problem becomes easier if we are content with a family of polynomials $f_t$ whose critical values form a braid that is isotopic to $B$, rather than realising a specific parametrisation. There is an extra degree of freedom that can be eliminated by setting the constant term of $f_t$ to 0 for all $t$. In this case, we can solve the system of polynomial equations for some fixed values of $t$, say $t=t_1,t_2,\ldots,t_m$ for some $m\in\mathbb{N}$, to obtain polynomials $f_{t_i}$. In contrast to the earlier examples, these solutions are not unique. We return to the question of the number of solutions later. Interpolating functions through the coefficients of the polynomials $f_{t_i}$ then provide us with the coefficients of $f(z,t)=f_t(z)$ as functions of $t$. For a sufficiently large number of data points, i.e., high values of $m$, the braid that is formed by the critical values of $f$ is isotopic to $B$.\\

This brings us to the question that started this project initially. Suppose the polynomials $f_t$ whose critical values form the braid $B$ all lie in $C_n$ with $n=s+1$. Then the roots of $f_t$ form a braid too, say $A$. What can be said about the relation between the braids $A$ and $B$, one formed by the roots of $f_t$, the other by its critical values? 

Let
\begin{align}
Z_n\defeq\{&f\in\mathbb{C}[z]: f \text{ monic of degree }n \text{ with distinct roots,}\nonumber\\
&  \text{distinct critical values and constant term equal to }0\}.
\end{align} 

\begin{remark} Since the critical values are distinct, the critical points of any polynomial in $Z_n$ are also distinct. The space $Z_n$ is therefore precisely the intersection of the three spaces considered earlier, the space of polynomials with distinct roots, distinct critical points and critical values respectively.
\end{remark}

If $f_t\in Z_n\subset C_n$, the fact that its roots are distinct implies that none of the critical values $v_1(t), v_2(t),\ldots, v_{n-1}(t)$ is 0. The space of the possible sets of critical values is therefore
\begin{equation}
V_n\defeq\{(v_1,v_2,\ldots,v_{n-1})\in(\mathbb{C}\backslash\{0\})^{n-1}: v_i\neq v_j \text{ if }i\neq j\}/S_{n-1},
\end{equation}
The fundamental group of $V_n$ is the affine braid group $\mathbb{B}_{n-1}^{aff}$, which is defined as follows.
\begin{definition}
The affine braid group $B_{n-1}^{aff}$ is presented by the generators $x,\sigma_2,\sigma_3,\ldots,\sigma_{n-1}$ and the relations
\begin{align}
\sigma_2x\sigma_2x&=x\sigma_2x\sigma_2,&\nonumber\\
\sigma_i\sigma_j&=\sigma_j\sigma_i, &\quad\text{ if }|i-j|>1,\nonumber\\
\sigma_i\sigma_{i+1}\sigma&=\sigma_{i+1}\sigma_i\sigma_{i+1}, &\quad\text{ if }i=2,3,\ldots,n-2,\nonumber\\
\sigma_ix&=x\sigma,&\quad\text{ if }i>2.
\end{align}
\end{definition}
%\begin{equation}
%B_{s}^{aff}=|\sigma_i....\rangle.
%\end{equation}
%This is a first hint that the relevant information about the braid $B$, which is formed by critical values, is not stored in its braid word. As it corresponds to a path in $W_d$, it can be parametrised in different non-homotopic ways.  
This corresponds to the usual Artin braid group with the only difference that in any braid word $\sigma_1$ can appear only appear with even exponents, so that $x=\sigma_1^2$ becomes one of the generators. In other words the affine braid group consists of precisely those braid words, where $\sigma_1$ always comes with an even exponent. Geometrically, this means that the braid can be parametrised such that the first strand does not move at all, i.e., can be taken to be $(0,t)\subset\mathbb{C}\times[0,1]$. We often refer to this strand as the \textit{0-strand} or the \textit{flagpole}. 

This description of $\pi_1(V_n)$ as $\mathbb{B}_{n-1}^{aff}$ with the above generators and relations assumes that we have chosen a base point in $V_n$ that is a $n-1$-tuple of complex numbers, whose real parts are all positive. If we consider different base points, loops correspond to conjugates (in the braid group) of affine braids with the generators and relations above, but setting $x=\sigma_1^2$. In general, any loop in $V_n$ corresponds to a braid, whose permutation of the $n$ strands has (at least) one fixed point. Sometimes we will be somewhat inexact and call such a braid affine as well. It should be clear from the context if we refer to a braid, whose permutation fixes the first strand or the $i^{\text{th}}$ strand for some $i=2,3,\ldots,n$, i.e., which of the strands is the 0-strand. 

The fact that loops in $V_n$ are affine braids is a first hint that the braid word of $B$, which is formed by the critical values, does not carry all relevant information about the parametrisation $(v_1(t),v_2(t),\ldots,v_{n-1}(t))$. The way in which the strands twist around the flagpole $(0,t)\subset\mathbb{C}\times[0,1]$ is important. We should focus on the braid that is formed by $(0,v_1(t),v_2(t),\ldots,v_{n-1}(t))$ instead of $B$ itself.

The question that was raised above should therefore be changed to the following.
\begin{question}
Given a loop $f_t\subset Z_n$, $t\in[0,1]$, $f_0=f_1$, with roots $(z_1(t),z_2(t),\ldots,z_n(t))$ and critical values $(v_1(t),v_2(t)\ldots,v_{n-1}(t))$. What is the relation between the braid $A$ that is formed by the roots $(z_1(t),z_2(t),\ldots,z_n(t))$ and the braid $B'$ that is formed by the 0-strand and the critical values $(0,v_1(t),v_2(t)\ldots,v_{n-1}(t))$? 
\end{question}

Note that all polynomials in $Z_n$ have 0 as one of their roots, since their constant term is by definition equal to 0. Therefore the braid $A$ that is formed by the roots of a parametrised family of polynomials $f_t$ in $Z_n$ contains one strand that does not move at all. In other words, if $f_0=f_1$, $A$ is an affine braid.

Furthermore, since the critical values are distinct, so are the critical points of any polynomial in $Z_n$. They are also non-zero because the roots are distinct and one of the roots is 0. Thus the union of critical points of a parametrised family of polynomials $f_t\in Z_n$, $t\in[0,1]$ and the 0-strand $(0,t)\subset\mathbb{C}\times[0,1]$ form a braid too. We can therefore not only consider relations between the braids formed by the roots and the critical values of such a family of polynomials, but also their relation to the braid that is formed by the critical points and the 0-strand. 

%Furthermore, we would like the dimension of the space of polynomials to be the same as the dimension of the space of critical values. So like in the example above, we demand that the constant term of every $f_t$ is 0. As a consequence one of the roots of each $f_t$ is 0, so the braid $A$, which is formed by the roots of $f_t$, contains one strand which does not move at all.
%Thus the space of polynomials that we should really focus on is

%the affine braid group

%\section{An action of the affine braid group on $\mathbb{Z}_{\ell}$}
%\label{sec:affine}

\section{Braid group actions on $\mathbb{Z}_n$}
\label{sec:actions}
In this section we describe actions of the Artin braid group on the $n$-adic integers and show that they are continuous. In fact, they are isometries on spaces that are homeomorphic to the $n$-adic integers.

\subsection{The proof of Theorem \ref{thm:fibspace}}
By Theorem \ref{thm:cover} $Z_n$ is a covering space of $V_n$ of degree $n^{n-1}$. Therefore, the fundamental group of $V_n$, which is the affine braid group $\mathbb{B}_{n-1}^{aff}$, acts on the fibre consisting of $n^{n-1}$ points. Furthermore, recall that the polynomials in $Z_n$ have a constant term equal to 0 and therefore one of their roots is equal to $0$. Hence $Z_n$ can be embedded in $V_n$ by sending a polynomial $f\in Z_n$ to its $n-1$ distinct non-zero roots. 

Let $Z_n^1\defeq Z_n$ and define $Z_n^{j+1}$ to be the space of monic polynomials $f\in Z_n$ with (by definition distinct) critical values $(v_1,v_2,\ldots,v_{n-1})$ such that the polynomial $z\prod_{i}^{n-1}(z-v_i)$ is in $Z_n^{j}$. We obtain an infinite tower of covering spaces
\begin{equation}
\label{eq:tower2}
\ldots\to Z_n^{j+1}\to Z_n^{j}\to\ldots \to Z_n^{2}\to Z_n^{1}=Z_n\to V_n,
\end{equation} 
where each covering map is $\theta_n$, the map that sends a polynomial to its set of critical values, restricted to the relevant $Z_n^{j}$ and composed with the map that sends a set of critical values $(v_1,v_2,\ldots,v_{n-1})$ to the polynomial $z\prod_{i}^{n-1}(z-v_i)$. We often ignore the second map and move freely between interpreting a point in $Z_n^j$ as a $(n-1)$-tuple of complex numbers and as the corresponding polynomial and simply write $\theta_n$ for the covering map. It follows that $Z_n^{j}$ is a covering space of $V_n$ of degree $(n^{n-1})^j$.

Suppose for example $v=(v_1,v_2,\ldots,v_{n-1})\in V_n$ and let $f_j\in Z_n$, $j=1,2,\ldots,n^{n-1}$ be the $n^{n-1}$ preimages of $v$ under the covering map, i.e., the polynomials that have the set $\{v_1,v_2,\ldots,v_{n-1}\}$ as their set of critical values. Let $\{z_1^j,z_2^j,\ldots,z_{n-1}^j\}$ denote the $n-1$ non-zero roots of $f_j$. Since $(z_1^j,z_2^j,\ldots,z_{n-1}^j)\in V_n$, there are exactly $n^{n-1}$ polynomials in $Z_n$ that have $(z_1^j,z_2^j,\ldots,z_{n-1}^j)$ as their sets of critical values.
In summary, for a given $v=(v_1,v_2,\ldots,v_{n-1})\in V_n$  there are  $(n^{n-1})^2$ polynomials $g_k$ in $Z_n$ that have the property that their critical values $(z_1,z_2,\ldots,z_{n-1})$ define a polynomial $z\prod_{i}^{n-1}(z-z_i)$ that has 0 and the critical values of $g_k$ as its roots and $v$ as its set of critical values.

Let
\begin{equation}
D_n=\{(z_1,z_2\ldots,z_n)\in\mathbb{C}^n:\ z_i\neq z_j\text{ if }i\neq j\}/S_{n-1},
\end{equation}
where the action by the symmetric group $S_{n-1}$ permutes the last $n-1$ factors. The map $p:D_n\to C_n$ that sends $(z_1,z_2,\ldots,z_n)$ to $(z_1,z_2,\ldots,z_n)$ is a covering map of degree $n$. This and the following lemma was pointed to us by a referee.

\begin{lemma}
The spaces $D_n$ and $V_n$ are homotopy equivalent.
\end{lemma}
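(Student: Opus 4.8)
The plan is to exhibit $V_n$ as a strong deformation retract of $D_n$, where the retraction is given by translating the distinguished point to the origin. Concretely, a point of $D_n$ is a tuple $(z_1,z_2,\ldots,z_n)$ of pairwise distinct complex numbers in which the first entry $z_1$ is marked (it is fixed by the $S_{n-1}$-action) while the remaining $n-1$ entries are unordered. Translating all entries by $-z_1$ sends the marked point to $0$ and carries the other $n-1$ points to the distinct nonzero numbers $z_2-z_1,\ldots,z_n-z_1$; since these are exactly the data of a point of $V_n$, I would identify $V_n$ with the closed subspace $\{z_1=0\}\subset D_n$ via the embedding $\iota\colon(v_1,\ldots,v_{n-1})\mapsto(0,v_1,\ldots,v_{n-1})$. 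Note the nonvanishing condition defining $V_n$ is automatic: each $v_i$ is distinct from the marked value $0$.

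First I would write down the retraction $r\colon D_n\to V_n$, $r(z_1,\ldots,z_n)=(z_2-z_1,\ldots,z_n-z_1)$, together with the homotopy
\begin{equation}
H\big((z_1,\ldots,z_n),s\big)=\big(z_1-sz_1,\ z_2-sz_1,\ \ldots,\ z_n-sz_1\big),\qquad s\in[0,1].
\end{equation}
Then $H(\cdot,0)=\mathrm{id}_{D_n}$ and $H(\cdot,1)=\iota\circ r$, while on the subspace $\{z_1=0\}$ the homotopy is constant in $s$, so this is a strong deformation retraction. The two points to verify are that $H$ never leaves $D_n$ and that everything descends to the $S_{n-1}$-quotients. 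The first is immediate because for each fixed $s$ the map $w\mapsto w-sz_1$ is a bijection of $\mathbb{C}$, hence preserves the pairwise-distinctness of the entries. The second holds because the $S_{n-1}$-action permutes only the last $n-1$ coordinates, leaving $z_1$ untouched, and the translation subtracts the same quantity $sz_1$ from every coordinate; thus $H$ is $S_{n-1}$-equivariant and descends to a continuous homotopy on the quotient $D_n$.

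The argument is essentially routine, so there is no serious obstacle; the only care needed is bookkeeping around the quotient by $S_{n-1}$, making sure the distinguished strand is genuinely fixed by the group action (so that ``translate the marked point to $0$'' is well defined) and that the deformation is equivariant. Once these are checked, $r$ and $\iota$ are mutually inverse up to homotopy, giving the desired homotopy equivalence $D_n\simeq V_n$. This is consistent with the fundamental-group computation already recorded, since both spaces are the configuration space of one marked point and $n-1$ unordered points, up to the harmless translation that pins the marked point at the origin.
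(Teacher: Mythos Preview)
Your proof is correct and is essentially the same as the paper's: the paper uses the maps $g=\iota$ and $h=r$ and the homotopy $H_t(z_1,\ldots,z_n)=(tz_1,z_2-(1-t)z_1,\ldots,z_n-(1-t)z_1)$, which is your $H(\cdot,1-t)$. You are slightly more careful in explicitly verifying that the homotopy stays inside $D_n$ and that it is $S_{n-1}$-equivariant, and you note that it is in fact a strong deformation retraction, but the underlying argument is identical.
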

\begin{proof}Consider the maps $g:V_n\to D_n$ and $h:D_n\to V_n$ given by 
\begin{equation}
g(z_1,z_2,\ldots,z_{n-1})=(0,z_1,\ldots,z_{n-1})
\end{equation}
and
\begin{equation}
h(z_1,z_2,\ldots,z_n)=(z_2-z_1,z_3-z_1,\ldots,z_n-z_1).
\end{equation}
Then $hg$ is the identity on $V_n$ and $hg\simeq id_{D_n}$ via the homotopy
\begin{equation}
H_t(z_1,z_2,\ldots,z_n)=(tz_1,z_2-z_1+tz_1,z_3-z_1+tz_1,\ldots,z_n-z_1+tz_1).
\end{equation}
\end{proof}

Therefore $\pi_1(D_n)=\pi_1(V_n)=\mathbb{B}_{n-1}^{aff}$ and we extend the tower of covering spaces in Eq. (\ref{eq:tower2}) as follows:
\begin{equation}
\label{eq:tower3}
\ldots\to Z_n^{j+1}\to Z_n^{j}\to\ldots \to Z_n^{2}\to Z_n^{1}=Z_n\to V_n\simeq D_n\xrightarrow{p} C_n.
\end{equation}

Hence $\pi_1(C_n)=\mathbb{B}_n$ acts on the fibre in each $Z_n^j$ and in $V_n$. We thus have an action of $\mathbb{B}_n$ on the set of $n\times(n^{n-1})^j$ points, or equivalently, on the set $\mathbb{Z}/n\mathbb{Z}\times\mathbb{Z}/(n^{n-1})^j\mathbb{Z}$ for every $j\in\mathbb{Z}_{\geq 1}$. Furthermore, the different actions are compatible with each other in the sense that $\theta_n(x.\gamma)=\theta_n(x).\gamma$ for all $x$ in the fibre in $Z_n^j$ and all $\gamma\in\mathbb{B}_n$.

%We can therefore consider the inverse limit space $Z\defeq V_n\times\underset{j}{\varprojlim}Z_n^j$, which by construction is a fibre space over $C_n$.

Let $v\in V_n$ and consider the set $X$ of infinite sequences $(a_1,a_2,a_3,\ldots)$ with $a_j\in Z_n^{j}$, $\theta_n(a_1)=v$ and $\theta_n(a_{j+1})=a_j$ for all $j\geq 1$. Since there are exactly $(n^{n-1})^j$ choices for the $j^{\text{th}}$ term $a_j$, of which only $n^{n-1}$ satisfy the compatibility condition, this set can be identified with the $n^{n-1}$-adic integers $\mathbb{Z}_{n^{n-1}}$. The fibre in the infite tower of covering spaces in Eq. (\ref{eq:tower2}) over a point $v$ is therefore $\mathbb{Z}_{n^{n-1}}$.

A given loop $(z_1(t),z_2(t),\ldots,z_n(t))$ in $C_n$ lifts to $n$ paths in $D_n$, which can be identified via the homotopy equivalence with the $n$ paths $(z_1(t)-z_i(t),z_2(t)-z_i(t),\ldots,z_{i-1}(t)-z_i(t),z_{i+1}(t)-z_i(t),\ldots,z_n(t)-z_i(t))$, $i=1,2,\ldots,n$. These $n$ paths permute the points in the fibre over the base points $(z_1(0)-z_i(0),z_2(0)-z_i(0),\ldots,z_{i-1}(0)-z_i(0),z_{i+1}(0)-z_i(0),\ldots,z_n(0)-z_i(0))$ via monodromy, so that we obtain a braid group action on the set $\mathbb{Z}/n\mathbb{Z}\times\mathbb{Z}_{n^{n-1}}$, which is homeomorphic to $\mathbb{Z}_n$. 

This finishes the proof of Theorem \ref{thm:fibspace} and it follows immediately that $\mathbb{B}_n$ acts on $\mathbb{Z}_n$ via the monodromy action. We write the action as $\phi_n(\cdot,B):\mathbb{Z}_n\to\mathbb{Z}_n$, $B\in\mathbb{B}_n$.
%We denote the action of an affine braid $B\in \mathbb{B}_{n-1}^{aff}$ by $\varphi_{n,B}$ 
%The group $\pi_1(V_n)=\mathbb{B}_{n-1}^{aff}$ acts on $X$ and therefore on $\mathbb{Z}_{n^{n-1}}$, since the action of $\mathbb{B}_{n-1}^{aff}$ on the fibres in the different $Z_n^{j}$ satisfies $\theta_n(a_{j}.\gamma)=\theta_n(a_j).\gamma$. Since $\mathbb{Z}_{n^{n-1}}\cong\mathbb{Z}_{n}$, this shows that the affine braid group on $n-1$ strands is acting on the $n$-adic integers, which we denote by $\varphi_n:\mathbb{Z}_n\times \mathbb{B}_{n-1}^{aff}\to \mathbb{Z}_n$. %The remainder of the proof of Theorem \ref{thm:affine} is shown in the following lemma.
\begin{definition}
A point $v=(v_1,v_2,\ldots,v_{n-1})\in V_n$ is said to have 0 in $i^{\text{th}}$ position if exactly $i-1$ of its entries $v_j$ have a negative real part and exactly $n-i$ have positive real part.
\end{definition}

\begin{remark}
%\begin{enumerate}[i)]
%\item Since $\pi_1(V_n)=\mathbb{B}_{n-1}^{aff}$, we also have an action of the affine braid group on the fibre $\mathbb{Z}_{n^{n-1}}$ sitting inside the action $\phi_n$. We denote this action by $\varphi_n$. 
%\item 
For a base point $z=(z_1,z_2,\ldots,z_n)$ in $C_n$, the corresponding $n$ start and end points of the lifted paths in $V_n$ are given by $x_i=(z_1-z_i,z_2-z_i,\ldots,z_{i-1}-z_i,z_{i+1}-z_i,\ldots,z_n-z_i)$. If all the $z_i$ have distinct non-zero real parts, the point $x_i$ has 0 in $i^{\text{th}}$ position (possibly after permuting the indices). A loop in $C_n$ based at $z$ and corresponding to the braid $B$ lifts to $n$ paths between $x_i$. Note that the entries of each of these paths together with the 0-strand are parametrisations of strands that form the same braid $B$. Therefore, the lift that starts at $x_i$ must end at the point $x_{\pi(B)}$, where $\pi:\mathbb{B}_n\to S_n$ is the permutation representation .
The monodromy action with respect to the covering map $p:D_n\to C_n$, which is the action on the first factor $\mathbb{Z}/n\mathbb{Z}$ inside $\phi_n$, is thus precisely the permutation representation.
%\end{enumerate}
\end{remark}

\subsection{The proof of Theorem \ref{thm:main}$i)$ for the action $\phi_n$}
We proceed with the proof of Theorem \ref{thm:main}$i)$.

\begin{proposition}
\label{prop:affine}
The braid group $\mathbb{B}_n$ action defined by $\phi_n$ preserves the metric on $\mathbb{Z}/n\mathbb{Z}\times\mathbb{Z}_{n^{n-1}}$. Therefore, the action $\phi_{n}(\cdot,B):\mathbb{Z}_n\to\mathbb{Z}_n$ is continuous.
\end{proposition}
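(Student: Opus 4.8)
The plan is to exhibit $\phi_n(\cdot,B)$ as an isometry by showing that it preserves the filtration of the fibre by the tower levels, since the metric on $\mathbb{Z}/n\mathbb{Z}\times\mathbb{Z}_{n^{n-1}}$ is precisely the one recording the largest level up to which two points of the fibre agree. Recall that a point of the fibre over $w\in C_n$ is a compatible sequence: a choice of sheet $i\in\mathbb{Z}/n\mathbb{Z}$ for the covering $p$, together with a sequence $(a_1,a_2,\ldots)$ with $a_j\in Z_n^{j}$ satisfying $\theta_n(a_{j+1})=a_j$ for all $j\geq 1$. Writing $d$ for the metric, two points lie at distance at most $(n^{n-1})^{-k}$ exactly when they share the same sheet $i$ and agree in the first $k$ terms $a_1,\ldots,a_k$; equivalently, when their images in the finite fibre $\mathbb{Z}/n\mathbb{Z}\times\mathbb{Z}/(n^{n-1})^{k}\mathbb{Z}$ coincide. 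Thus proving that $\phi_n(\cdot,B)$ preserves $d$ reduces to showing that agreement of two points up to level $k$ is equivalent to agreement of their images up to level $k$.

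First I would record the single structural input that makes this work, namely that monodromy commutes with every bonding map in the tower \eqref{eq:tower3}. For the covering maps $\theta_n$ this is exactly the compatibility $\theta_n(x.\gamma)=\theta_n(x).\gamma$ already noted for $x$ in the fibre of $Z_n^{j}$ and $\gamma\in\mathbb{B}_n$, and the analogous identity for $p$ is supplied by the Remark identifying the first-factor monodromy with the permutation representation $\pi$. Functoriality of path lifting then shows that the monodromy action of $\mathbb{B}_n$ on the inverse-limit fibre $\mathbb{Z}/n\mathbb{Z}\times\mathbb{Z}_{n^{n-1}}$ is the inverse limit of its actions on the finite fibres $\mathbb{Z}/n\mathbb{Z}\times\mathbb{Z}/(n^{n-1})^{j}\mathbb{Z}$: lifting a loop representing $B$ through the whole tower and then truncating at level $j$ gives the same result as first truncating the basepoint data and then applying the level-$j$ monodromy.

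From this the isometry property is immediate. The level-$j$ data of $\phi_n(a,B)$, meaning its sheet together with the first $j$ adic terms, is a function of the level-$j$ data of $a$ alone; indeed the sheet of the image is $\pi(B)$ applied to the sheet of $a$, and the first $j$ adic terms of the image are determined by the level-$j$ finite monodromy. Consequently, if $a$ and $a'$ agree up to level $k$, then so do $\phi_n(a,B)$ and $\phi_n(a',B)$, whence $d(\phi_n(a,B),\phi_n(a',B))\leq d(a,a')$. Applying the same inequality to $B^{-1}$, whose action is the inverse bijection of $\phi_n(\cdot,B)$, yields the reverse inequality, so $\phi_n(\cdot,B)$ is an isometry. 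Since every isometry of metric spaces is uniformly continuous, the asserted continuity of the action on $\mathbb{Z}_n$ follows.

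The hard part will not be any single estimate but rather making the inverse-limit monodromy statement precise: one must check that lifting a loop coherently through the entire infinite tower is well defined and that its level-$j$ truncation genuinely coincides with the level-$j$ monodromy, so that the action descends to each finite quotient $\mathbb{Z}/n\mathbb{Z}\times\mathbb{Z}/(n^{n-1})^{j}\mathbb{Z}$. Once this coherence is established the metric-preservation is formal. A secondary point to treat carefully is the discrete first factor $\mathbb{Z}/n\mathbb{Z}$: preserving its contribution to $d$ only requires that $\pi(B)$ act bijectively on the $n$ sheets, which it does, but one should confirm that the product metric combines the discrete factor and the $n^{n-1}$-adic factor exactly as the level description above assumes.
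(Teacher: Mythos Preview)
Your argument is correct and follows essentially the same route as the paper's proof: both observe that the metric records the first level at which two compatible sequences differ, use the compatibility $\theta_n(x.\gamma)=\theta_n(x).\gamma$ to deduce that agreement up to level $k$ is preserved under the action, and then apply the inverse $\gamma^{-1}$ to obtain the reverse inequality and hence an isometry. The concerns you flag at the end (coherence of the inverse-limit monodromy and the discrete first factor) are legitimate but routine, and the paper simply takes them as given from the construction of $\phi_n$.
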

\begin{proof}
The topology on $\mathbb{Z}_{n^{n-1}}$ is induced by the metric, which itself is derived from the $n^{n-1}$-adic valuation. We can easily extend the metric to $\mathbb{Z}/n\mathbb{Z}\times\mathbb{Z}_{n^{n-1}}$. %We can therefore prove continuity by applying the $\epsilon$-$\delta$-criterion. 
Let $x=(x_1,x_2,x_3,\ldots)\in\mathbb{Z}/n\mathbb{Z}\times\mathbb{Z}_{n^{n-1}}$ and $y=(y_1,y_2,y_3,\ldots)\in\mathbb{Z}/n\mathbb{Z}\times\mathbb{Z}_{n^{n-1}}$, i.e., $x_i, y_i\in\mathbb{Z}/n\times (n^{n-1})^i\mathbb{Z}$ satisfy the compatibility conditions $x_i\equiv x_{i-1}\text{ mod }n\times(n^{n-1})^i$, $y_i\equiv y_{i-1}\text{ mod }n\times(n^{n-1})^i$.
Suppose that $|y-x|_n=(n^{n-1})^{-m-1}$. This is equivalent to $x$ and $y$ agreeing on the first $m$ terms of the sequence, i.e., $x_i=y_i$ for all $i=1,2,\ldots,m$, but $x_{m+1}\neq y_{m+1}$.

Therefore, the first $m$ terms of the sequences $x.\gamma$ and $y.\gamma$ also agree with each other for all $\gamma\in \mathbb{B}_n$, which means the distance between $\phi_n(x,\gamma)$ and $\phi_n(y,\gamma)$ is at most $(n^{n-1})^{-m-1}$. On the other hand, applying $\gamma^{-1}$ to $\phi_n(x,\gamma)$ and $\phi_n(y,\gamma)$ shows that their distance is at least $(n^{n-1})^{-m-1}$. %$\gamma$ acts on every coordinate of $\mathbb{Z}_n$ as a permutation, so that $x_{m+1}=y_{m+1}$ if and only if $(x.\gamma)_{m+1}=(y.\gamma)_{m+1}$. 
Hence $|\phi_n(y,\gamma)-\phi_n(x,\gamma)|_n=(n^{n-1})^{-m-1}$ and the action $\phi_n$ preserves the metric on $\mathbb{Z}/n\mathbb{Z}\times\mathbb{Z}_{n^{n-1}}$. The $\epsilon-\delta$ criterion then tells us that the action is continuous on $\mathbb{Z}/n\mathbb{Z}\times\mathbb{Z}_{n^{n-1}}$ and by Section \ref{sec:adic} continuous on $\mathbb{Z}_n$.
%, which proves continuity of the action on $\mathbb{Z}_{d^{d-1}}$ via the $\epsilon$-$\delta$-criterion with $\delta=\epsilon$. Since $\mathbb{Z}_{d^{d-1}}\cong\mathbb{Z}_d$ as topological groups, the action on $\mathbb{Z}_d$ is continuous as well.
\end{proof}

\subsection{The action $\psi_n$}
Explicit calculations of $\phi_n$ are rather elaborate. In order to compute the action of the generators $\sigma_i$ of $\mathbb{B}_n$ on $\mathbb{Z}_n$, we need to find a parametrisations of each $\sigma_i$, say $(z_1(t),z_2(t),\ldots,z_n(t))$, $t\in[0,1]$. The $n$ lifts of these loops in $V_n$ are then $(z_1(t)-z_i(t),z_2(t)-z_i(t),\ldots,z_{i-1}(t)-z_i(t),z_{i+1}(t)-z_i(t),\ldots,z_n(t)-z_(t))$, $i=1,2,\ldots,n$. We now need to lift each of these paths in $V_n$ to paths in $Z_n^j$. Every such lifting procedure corresponds to solving a system of $n-1$ polynomial equations for sufficiently many data points $\{t_i\}_{i=1,2,\ldots,m}\subset[0,1]$. Therefore, in order to compute the action of $\sigma_i$ on the first $k+1$ coordinates of $\mathbb{Z}/n\mathbb{Z}\times\mathbb{Z}_{n^{n-1}}$, $k\geq1$, we need to solve $m n (n^{n-1})^{k-1}$ systems of $2(n-1)$ polynomial equations, each of which has $n^{n-1}$ solutions. 

Once we have found the lifts, we can read off the permutations on the fibres in the different $Z_n^j$, $j=1,2,\ldots,k$, that are induced by $\sigma_i$. From these we can build the effect of any $B\in \mathbb{B}_n$ on the first $k+1$ coordinates by composing the permutations for the individual generators (and their inverses). The action on the first coordinates on $\mathbb{Z}_n$ can then be determined via the homeomorphism between $\mathbb{Z}/n\mathbb{Z}\times\mathbb{Z}_{n^{n-1}}$ and $\mathbb{Z}_n$. The composition of permutations can be done in a number of steps that grows linearly with the length of the braid word of $B$. What makes this calculation impractical is the huge number of systems of polynomial equations that we have to solve.

Since we want to avoid such unnecessarily expensive computations, we define a different action $\psi_n$ of $\mathbb{B}_n$ on $\mathbb{Z}_{n^n}\cong\mathbb{Z}_n$, which is very similar to $\phi_n$, but requires solving only $m n$ systems of $2(n-1)$ polynomials, no matter how many coordinates of $\mathbb{Z}_n$ we are interested in. As an illustration of the concept we solve the corresponding systems of equations for the cases of $n=2$ and $n=3$ in Section \ref{sec:comps} and illustrate how to use the solutions to compute the action of any braid $B\in \mathbb{B}_n$ on any given number of coordinates of $\mathbb{Z}_n$.

We now define the action $\psi_n$. Let $B\in \mathbb{B}_n$. Like in the definition of $\phi_n$ we think of $B$ as a loop in $C_n$ and lift it to $n$ paths in $V_n$. Lifting these paths to $Z_n$ results in $n\times n^{n-1}$ paths in $Z_n$, which give us a permutation of the $n\times n^{n-1}=n^n$ in the fibre that is compatible with $\pi(B)$. The action of $B$ on the first coordinate of $\mathbb{Z}_{n^n}$ via $\psi_n$ is therefore the same as its action on the second coordinate of $\mathbb{Z}/n\mathbb{Z}\times\mathbb{Z}_{n^{n-1}}$ via $\phi_n(\cdot,B)$. We are going to refer to this map more often, so we give it a name: $\sigma:\mathbb{B}_n\to S_{n^n}$.

Now comes the part that distinguishes the new action from $\phi_n$. Each of the $n\times n^{n-1}$ paths in $Z_n$ corresponds to a family of polynomials $f_{i,t}\subset Z_n$, $i=1,2,\ldots,n^n$, $t\in[0,1]$ such that the union of the critical values of $f_{i,t}$ and the zero strand $(0,t)\subset\mathbb{C}\times[0,1]$ form the braid $B$ as $t$ varies from 0 to 1. Since $f_{i,t}\in Z_n$, the roots of $f_{i,t}$ also form a braid as $t$ varies from 0 to 1, say $B_i$. Instead of lifting the $n\times n^{n-1}$ paths in $Z_n$ to $Z_n^2$ as before (i.e., using the particular parametrisations of $B_i$ that we obtained through the lifting procedure), we consider each $B_i$ as a loop in $C_n$ just like we did for the original braid $B$. We can lift these $n^n$ loops in $C_n$ to paths in $V_n$ and then $Z_n$ and obtain $n^n$ permutations of the $n^n$ points in the fibre. By definition these permutations are simply $\sigma(B_i)$.
%parametrize each $B_j$ as $n$ paths in $V_n$ starting and ending at the different $x_i$s just like we did for the original braid $B$. We can lift these $n^n$ paths in $V_n$ to $Z_n$ and obtain $n^n$ permutations of the $n^n$ preimage points $\theta_n^{-1}(\{x_1,x_2,\ldots,x_n\})$. By definition these permutations are simply $\sigma(B_j)$.

We can write these $n^n$ permutations $\sigma(B_i)$ of $n^n$ points as one permutation of $(n^n)^2$ points that is compatible with the permutation of $n^n$ points, i.e., compatible with the action on the first coordinate of $\mathbb{Z}_{n^n}$, as follows. Label the points with 0 through $n^{2n}-1$, so that every number can be expressed uniquely as $n^n k+i$ with $k,i=0,1,2,\ldots,n^n-1$. Then the permutation of $(n^n)^2$ points is
\begin{equation}
\label{eq:perm}
n^n k+i \mapsto n^n \sigma(B_i)(k)+\sigma(B)(i).
\end{equation} 
Note that the residue class mod $n^n$ of the image is given by $\sigma(B)$, which means that this permutation on $(n^n)^2$ points is compatible with $\sigma(B)$ on $n^n$ points. In each residue class are $n^n$ elements that $n^n k+i$ could be mapped to by a permutation that satisfies this compatibility condition. The permutation $\sigma(B_i)$ specifies which one is the actual image point. From this description it is (hopefully) clear that we can associate to every braid $B$ a permutation of $(n^n)^2$ points that is compatible with $\sigma(B)$ on $n^n$ points. It might not be entirely obvious that we have actually constructed an action on the set of $(n^n)^2$ points. 

Since $\theta_n$ and $p$ are a covering maps, the homotopy types of the lifted paths only depend on the homotopy type of the original path in $C_n$. Equivalently, the braid types $B_i$ of the braids formed by the roots of the lifts $f_{i,t}$ are invariants of the original braid $B$. In particular, neither the choice of parametrisation of $B$, nor the choice of parametrisations of the lifted braids $B_i$ as loops in $C_n$ changes the resulting permutation. 

Let $z_0$ through $z_{n^n-1}$ denote the points in the fibre in $Z_n$. Let $\mathbb{B}_n^{n^n}$ be the direct product of $n^n$ copies of $\mathbb{B}_n$. The symmetric group $S_{n^n}$ acts on $\mathbb{B}_n^{n^n}$ by permutation of the components. Algebraically, the lifting procedure gives us a homomorphism $h$ from $\mathbb{B}_n$ to the semidirect product, or wreath product, $\mathbb{B}_n^{n^n}\rtimes S_{n^n}$, where $h$ sends a braid $B$ to the list of lifted braids $B_i$ starting at $z_i$ and the permutation $\sigma(B)$,
\begin{equation}
h:B\mapsto(B_0,B_1,B_2,\ldots,B_{n^n-1},\sigma(B)).
\end{equation} 
It is a homomorphism because addition in $\mathbb{B}_n^{n^n}\rtimes S_{n^n}$ corresponds to concatenation of paths (i.e., braids) with matching start and end points. Composing $h$ with $n^n$ copies of $\sigma$ we obtain a homomorphism $\mathbb{B}_n\to S_{n^n}\rtimes S_{n^n}$. We can now check that the map that sends an element $(\sigma(B_1),\sigma(B_2),\ldots,\sigma(B_{n^n}),\sigma(B))\in S_{n^n}\rtimes S_{n^n}$ to the permutation in Equation $(\ref{eq:perm})$ is a homomorphism $S_{n^n}\rtimes S_{n^n}\to S_{(n^n)^2}$, which altogether results in a homomorphism $\mathbb{B}_n\to S_{(n^n)^2}$. We have
\begin{align}
n^n \sigma(B_{\sigma(A)(i)})(\sigma(A_i)(k)) + \sigma(B)(\sigma(A)(i))&=n^n \sigma(A_i B_{\sigma(A)(i)})(k) + \sigma(AB)(i)\nonumber\\
&=n^n \sigma((AB)_i)(k) + \sigma(AB)(i)
\end{align}
for all $A,B\in\mathbb{B}_n$.
We have therefore constructed an action $\rho_{n,2}$ of $\mathbb{B}_n$ on $(n^n)^2$ points that is compatible with the action $\rho_{n,1}=\sigma$ on $n^n$ points. We iterate this process to actions on $(n^n)^{j+1}$ points that are compatible with the action on $(n^n)^j$ points.

%Let $\mathbb{B}_n*$ be the group that consists of unordered $n^n$-tuples
%\begin{equation}
%((B_1,z_1,z_{\sigma(B)(1)}),(B_2,z_2,z_{\sigma(B)(2)}),\ldots,(B_{n^n},z_{n^n},z_{\sigma(B)(n^n)}))
%\end{equation} 
%of braid words $B_i$ with start and end points $z_i$ and $z_{\sigma(B)(i)}$, where $\sigma(B)\in S_{n^n}$. Addition is not defined component-wise, but rather as concatenation of paths with matching end and start points, i.e.,
%\begin{align}
%&((A_1,z_1,z_{\sigma(A)(1)}),(A_2,z_2,z_{\sigma(A)(2)}),\ldots,(A_{n^n},z_{n^n},z_{\sigma(A)(n^n)}))+((B_1,z_j,z_{\sigma(B)(1)}),(B_2,z_k,z_{\sigma(B)(2)}),\ldots,(B_{n^n},z_m,z_{\sigma(B)(n^n)}))\nonumber\\
%&=(A_1B_{\sigma(A)(1)},z_1,z_{\sigma(B)(\sigma(A)(1))},A_2B_{\sigma(A)(2)},z_2,z_{\sigma(B)(\sigma(A)(2))},\ldots,
%A_{n^n}B_{\sigma(A)(n^n)},z_2,z_{\sigma(B)(\sigma(A)(n^n))}).
%\end{align} 

%Algebraically, the lifting procedure gives us a homomorphism $h$ from $\mathbb{B}_n$ to $\mathbb{B}_n*$. Furthermore, the permutation of the $n^n$ preimage points $\theta_n^{-1}(x_1,x_2,\ldots,x_n)$ induced by a braid corresponds to a homomorphism from $\mathbb{B}_n$ to $S_{n^n}$. This is simply the map $B\mapsto \sigma(B)=\psi_n(\cdot,B)|_{\mathbb{Z}/n^n\mathbb{Z}}$.

Applying the homomorphism $h$ to each of the $B_i$ results in $n^n$ elements of $\mathbb{B}_n\rtimes S_{n^n}$. Previously, we denoted the image of a braid $B$ under $h$ by $(B_0,B_1,B_2,\ldots,B_{n^n-1},\sigma(B))$. In order to avoid excessive use of subscripts we write $B_{2,i},B_{2,n^n+i},B_{2,(2\times n^n)+i}\ldots,B_{2,(n^n-1)\times n^n+i}$ for the braids that correspond to the lifts of $B_i$ instead of $B_{i_0}$, $B_{i_1}$ and so on. Note in particular that all braids that are lifts of $B_i$ have a second index that is in the residue class $i$ mod $n^n$. We have chosen this labelling such that the map $B\mapsto (B_{2,0},B_{2,1},B_{2,2},\ldots,B_{2,(n^n)^2-1},\rho_{n,2}(B))$ becomes a homomorphism from $\mathbb{B}_n$ to $\mathbb{B}_n^{(n^n)^2}\rtimes S_{(n^n)^2}$. Exactly as in the previous case we define the action of $B$ on $(n^n)^3$ points by
\begin{equation}
(n^n)^2 k+i\mapsto (n^n)^2 \sigma(B_{2,i})(k)+\rho_{n,2}(B)(i),
\end{equation}
where $k=0,1,2,\ldots,n^n-1$ and $j=0,1,2,\ldots,(n^n)^2-1$.

It is important to note that not only the braid types $B_{1,i}=B_i$ of the lifted paths of the parametrisations of $B$ are invariants of $B$. As such the braid types of the lifts of the parametrisations of $B_{2,i}$ are also invariants of $B$, say $B_{2,k n^n+i}$, $k=0,1,2,\ldots,n^n-1$. Continuing like this we obtain a sequence of braids 
\begin{align}
\label{eq:seq}
(B,&\{B_{1,0},B_{1,1},B_{1,2},\ldots,B_{1,n^n-1}\},\{B_{2,0},B_{2,1},B_{2,2},\ldots,B_{2,(n^n)^2-1}\},\ldots,\nonumber\\
&\{B_{j,0},B_{j,1},B_{j,2},\ldots,B_{j,(n^n)^j-1}\},\ldots)
\end{align}
that is an invariant of $B$. This should be understood as follows. If two braids, $A$ and $B$, with sequences
\begin{align}
(A,&\{A_{1,0},A_{1,1},A_{1,2},\ldots,A_{1,n^n-1}\},\{A_{2,0},A_{2,1},A_{2,2},\ldots,A_{2,(n^n)^2-1}\},\ldots,\nonumber\\
&\{A_{j,0},A_{j,1},A_{j,2},\ldots,A_{j,(n^n)^j-1}\},\ldots)
\end{align}
and
\begin{align}
(B,&\{B_{1,0},B_{1,1},B_{1,2},\ldots,B_{1,n^n-1}\},\{B_{2,0},B_{2,1},B_{2,2},\ldots,B_{2,(n^n)^2-1}\},\ldots,\nonumber\\
&\{B_{j,0},B_{j,1},B_{j,2},\ldots,B_{j,(n^n)^j-1}\},\ldots)
\end{align}
are isotopic, then for every ${j,i}$ the braid $A_{j,i}$ is isotopic to $B_{j,i}$.

We can define an action $\rho_{n,j+1}$ on $(n^n)^{j+1}$ points inductively via
\begin{equation}
(n^n)^j k+i\mapsto (n^n)^j \sigma(B_{j,i})(k)+\rho_{n,j}(B)(i),
\end{equation}
where $\rho_{n,j}$ is the action on $(n^n)^j$ points and $k=0,1,2,\ldots,n^n-1$ and $i=0,1,2,\ldots,(n^n)^j-1$. Note that the residue classes mod $(n^n)^j$ are permuted according to $\rho_{n,j}$, i.e., the action on $(n^n)^{j+1}$ points is compatible with the action on $(n^n)^j$ points. We thus obtain a braid group action $\psi_n$ on $\mathbb{Z}_{n^n}\cong\mathbb{Z}_n$. Since $\psi_n$ is constructed from permutations in every coordinate of $\mathbb{Z}_{n^n}$ it preserves the metric and is therefore continuous, both by exactly the same arguments as in the case of $\phi_n$ (cf. Proposition \ref{prop:affine}). This concludes the proof of Theorem \ref{thm:main} \textit{i)}.

%Iterating this procedure results in $(n^n)^j$ permutations of the $(n^n)$ preimage points of $\{x_1,x_2,\ldots,x_n\}$ in $Z_n$, which can be written as one permutation of $(n^n)^{j+1}$ points that is compatible with the induced permutation of $(n^n)^j$ points for any $j\geq 1$. We can thus associate to every braid $B\in \mathbb{B}_n$ a map $\mathbb{Z}_{n^n}\to\mathbb{Z}_{n^n}$. Addition in the braid group corresponds to concatenation of paths and hence to a composition of the permutations on the level of each coordinate and hence we have defined an action $\psi_n:\mathbb{Z}_{n^n}\times \mathbb{B}_n\to\mathbb{Z}_{n^n}$.

%%%%%%
\begin{remark}
It should be noted that this new action $\psi_n(\cdot,B)$ carries less (or equal) information about $B$ than $\phi_n(\cdot,B)$. This is because $Z_n$ is a proper subset of $C_n$. In particular, there are non-homotopic paths $\gamma_1$, $\gamma_2$ in $Z_n$ corresponding to families of polynomials $f_t$ and $g_t$ such that the braids that are formed by the roots of $f_t$ and $g_t$ are isotopic. On the other hand, $\psi_n(\cdot,B)$ is completely determined by $h(B)\in \mathbb{B}_n^{n^n}\rtimes S_{n^n}$. 
\end{remark}
\begin{remark}
The construction of both actions in this section is possible because $Z_n$ can be embedded into $V_n$ by sending a polynomial to the set of its non-zero roots. There is another way of embedding $Z_n$ into $V_n$, which sends a polynomial to the set of its critical points. We again obtain a tower of covering spaces $\hat{Z}_n^{j}$ and can define analogues of the actions $\phi_n$ and $\psi_n$. In fact, all results from this section remain true for these actions.

Note that for $j>1$ the spaces $\hat{Z}_n^j$ are different from $Z_n^j$ as they are the spaces of monic polynomials $f$ such that the set of critical values of $f$ is the set of critical points of a polynomial in $\hat{Z}_n^{j-1}$. We will see in Section \ref{sec:comps}, that the analogue of $\psi_3$ is different from $\psi_3$.
\end{remark}

\section{Sequences of braids}
\label{sec:seq}
This section aims at using sequences of braids to make braid invariants stronger. Say we have a way of associating to any braid $B\in\mathbb{B}_n$ a sequence of braids, whose braid types are invariants of $B$. We take a braid invariant $I_n:\mathbb{B}_n\to K_n$ with values in some set $K_n$ that is not very good at distinguishing braids, but easy to compute, such as the exponent sum or the permutation representation, which both grow in complexity linearly with the length of the braid word. We can evaluate $I_n$ on the first $k$ terms of the sequence associated to $B$ and obtain a sequence of braid invariants that is presumably much stronger than the original invariant $I_n$. If two braids $A$ and $B$ are isotopic, then not only are their exponent sums equal, but also every braid in the sequence associated to $A$ is isotopic to its counterpart in the sequence associated to $B$ and hence these exponent sums must be equal too. Furthermore, if it is not hard to compute the sequence corresponding to a braid $B$, then the whole sequence of invariants is relatively easy to compute too.

In the previous section (cf. Eq. (\ref{eq:seq})) we have encountered such a sequence of braids.
%Note however that this is not a very good choice for our purposes, as the sequence associated to $B$ is completely determined by $h(B)\in\mathbb{B}_{n^n}\rtimes S_{n^n}$, which is the first $n^n$ terms of the sequence and the action of $B$ on $\mathbb{Z}/n^n\mathbb{Z}$ as remarked at the end of the previous section. Therefore, the latter terms in the sequence cannot tell us anything about $B$ that we do not already know from the first entries. Of course, we can evaluate the braid invariant $I_n$ on the first $n^n$ terms of the sequence and together with the action of $B$ on $\mathbb{Z}/n^n\mathbb{Z}$ and this is an invariant that is stronger than $I_n$, but it misses the point of having constructed an infinite sequence of braids, namely that in principle one could evaluate the invariant on ever more terms to obtain stronger and stronger invariants.
There are some other candidates that are just as suitable for our purposes. We could for example consider the collection of lifts in $Z_n^j$ of a loop in $C_n$ that corresponds to the braid $B$. Each of the lifts is a path in $Z_n^j$ and therefore corresponds to a braid $B'_{j,i}$ starting at the point $z_{j,i}$, $i=0,1,2,\ldots,n\times(n^{n-1})^j-1$, in the fiber in $Z_n^j$. We obtain again a sequence of braids 
\begin{equation}
(B,\{B'_{1,i}\}_{i=0,1,2,\ldots,n^n-1},\{B'_{2,i}\}_{i=0,1,2,\ldots,n\times(n^{n-1})^2-1},\ldots)
\end{equation} 
similar to Equation (\ref{eq:seq}), which are invariants of $B$. This time however, the sequence is not determined by the entries ${B'_{1,0},B'_{1,1,},\ldots,B'_{1,n^n-1}}$ and the permutation in $S_{n^n}$. This is essentially because $Z_n^j$ is a proper subset of $V_n$. In particular, there are paths in $Z_n^j$ with the same start and end points that correspond to the same braid (i.e., are homotopic in $C_n$), but are not homotopic in $Z_n^j$.

\begin{remark}
There are two different embeddings of $Z_n^1$ into $V_n$, the one that sends a polynomial to its non-zero roots and the one that sends a polynomial to its critical points. Both of these give rise to an infinite tower of covering spaces $Z_n^j$ and $\hat{Z}_n^j$ and the procedure discussed above works for both of them. Hence we have in fact three sequences of braids, $B_{j,i}$ $B'_{j,i}$ and $\hat{B}_{j,i}$, say, that are candidates for improving braid invariants. 
\end{remark}

\begin{remark}
Note that while the braid sequences are invariants of the braid $B$, they do depend on the base point in $C_n$.
\end{remark}

The sequences $B_{j,i}$, $B'_{j,i}$ and $\hat{B}_{j,i}$ are infinite sequences of braid invariants of $B$. In order to understand if they are useful for the improvement of braid invariants, we need to know how hard it is to compute them. Once we have computed the first $k$ terms of the corresponding sequences and permutations for each generator of $\mathbb{B}_n$, we can calculate the first $k$ terms of the sequences $B'_{j,i}$ and $\hat{B}_{j,i}$ in a number of steps that grows linearly with the length of the braid word of $B$ as it is simply addition in semi-direct products of groups. In the case of $B'_{j,i}$ and $\hat{B}_{j,i}$ the first $k$ terms of the sequences that correspond to a generator can be found by solving $k$ systems of polynomial equations. If $k$ is too large, this becomes impratical. Therefore, even though we have in theory infinite sequences of invariants at our disposal, in practice we will only calculate the first $k$ entries for some relatively low number $k$. In the case of $B_{j,i}$ we only have to solve one system of polynomial equations to find the first term of the sequence $\{B_{1,i}\}_{i=0,1,2\ldots,n^n-1}$ corresponding to a generator $B=\sigma_m$. From this first term we can calculate $B_{j,i}$ for any braid $B$ and any $j$, $i$ in a number of steps that grows linearly with the length of $B$. The sequence $B_{j,i}$ is therefore significantly easier to compute than $B'_{j,i}$ or $\hat{B}_{j,i}$.

The braids $B_{j,i}$, $B'_{j,i}$ and $\hat{B}_{j,i}$ in the sequences are open braids in the sense that in general the corresponding paths in $Z_n$ and $Z_{n}^j$ respectively are not necessarily loops. We can associate braids to the paths nonetheless, since isotopies of the braids formed by the roots of the polynomials that make up the paths are equivalent to homotopies of paths that keep the start and end points fixed. However, there is also a way to associate to a braid $B$ a sequence of braids $B_{j,C}$ that correspond to loops in $Z_n^j$. These can be used to turn an invariant of conjugacy classes of braids into a sequence of invariants of conjugacy classes of braids, which is presumably much stronger.

Denote by $\phi_{n,j}(B)$ the restriction of the action $\phi_n(\cdot,B)$ to an action on $n\times(n^{n-1})^j$ points and let $\mathfrak{C}_{j}$ the set of cycles of $\phi_{n,j}(B)$. Then every cycle
\begin{equation}
C=(i\ \ \ \phi_{n,j}(B)(i)\ \ \ \phi_{n,j}(B)^2(i)\ \ \ldots\ \ \phi_{n,j}(B)^{|C|}(i))\in\mathfrak{C}_j,
\end{equation} 
where $|C|$ is the length of the cycle $C$, can be associated with the braid corresponding to the loop $\gamma_{C}$ in $Z_n^j$ that is the concatenation of the lifts starting at $z_{j,i}$, $z_{j,\phi_{n,j}(B)(i)}$, $z_{j,\phi_{n,j}(B)^2(i)}$ and so on, where we denote the points in the fibres in $Z_n^j$ by $z_{j,i}$. 

We thus obtain a sequence of braids 
\begin{equation}
\label{eq:conjseq}
(B,\{B_{C_{1,i}}\}_{i=1,2,\ldots,|\mathfrak{C}_1|},\{B_{C_{2,i}}\}_{i=1,2,\ldots,|\mathfrak{C}_2|},\ldots)
\end{equation}
where we have labelled the cycles in $\mathfrak{C}_j$ by $C_{j,i}$ and $|\mathfrak{C}_j|$ denotes the number of cycles of $\phi_{n,j}(B)$. 

The above procedure again really gives rise to several sequences of braids, depending on how we choose to associate a braid to a path in $Z_n^j$, the braid that is formed by its roots $B'_{j,i}$ or by its critical points $\hat{B}_{j,i}$.

%This braid is 
%\begin{equation}
%B_{C}:=B_{n,i}B_{1,\rho_{n,1}(B)(i)}B_{1,\rho_{n,1}(B)^2(i)}\ldots B_{1,\rho_{n,1}(B)^{|C|}(i)}.
%\end{equation}

%Since $\gamma_{C}$ is a loop in $V_{n}^j=Z_n$, it is an affine braid. We can thus think of it as a loop in $V_n$, which can be lifted to $Z_n$. In fact, the lifts lie in $Z_n^{j+1}$. Again, these lifts are not necessarily loops, but we can concatenate the paths such that we obtain a loop for each cycle of $\phi_{n,j+1}(B_{C})$, the resriction of $\phi_{n,1}(B_{C})$ to an action on $n^{n-1}$ points, which results in as many braids as $\rho_{n,1}(B_{C})$ has cycles, say $m(B_C)$. We can iterate this process to obtain a sequence of braids
%\begin{equation}
%\label{eq:conjseq}
%(B,\{B_{C_1},B_{C_2},\ldots,B_{C_{|\mathfrak{C}|}}\},\{B_{D_{i,l}}\},\{B_{E_{i,l,k}}\},\ldots),\quad i=1,2,\ldots,|\mathfrak{C}|, l=1,2,\ldots,m(B_{C_i}), k=1,2,\ldots, m(B_{D_{i,l}}),
%\end{equation}
%where we have labelled the cycles in $\mathfrak{C}$ by $C_i$, the cycles of $\rho_{n,1}(B_{C_i})$ by $D_{i,l}$ and the cycles of $\rho_{n,1}(D_{i,l})$ by $E_{i,l,k}$. 

%The notation is a bit complicated, but the advantage is that 
Now every braid in the sequence (\ref{eq:conjseq}) (apart from $B$) comes from a loop in $Z_n^j$, which was obtained from a lifting procedure. This means that the sequence (\ref{eq:conjseq}) is an invariant of the conjugacy class of $B$ in the sense that if $A$ is conjugate to $B$, then for every $j$ there is a bijection $g_j$ between the sets of cycles of $\phi_{n,j}(B)$ and $\phi_{n,j}(A)$ that maps each cycle to a cycle of the same length. Furthermore, $A_{g_j(C_{j,i})}$ is conjugate to $B_{C_{j,i}}$. %But this implies that $\phi_{n,1}(B_{C_i})$ and $\varphi_{n,i}(A_{g(C_i)})$ are conjugate and hence have identical cycle types. There is again a bijection $g'$ between the sets of cycles of the two permutations, so that $B_{D_{i,l}}$ is conjugate to the corresponding $A_{g'(D_{i,l})}$. In general, at each level there is a bijection between the cycle types of the relevant permutations and all braids are conjugate to their image under these bijections. 
We can now apply an invariant $J_n$ of braid conjugacy classes in $\mathbb{B}_n$ to the sequence in Eq. (\ref{eq:conjseq}) and obtain a sequence of invariants that is (presumably) a lot stronger than the original invariant $J_n$. %Again each braid in the sequence can be computed in a linear number of steps.
%Since the braids $B_{j,i}$ in the sequence corresponding to $B$ are the braid types of the roots of lifted paths of parametrisations of $B$, their conjuacy classes are invariants of $B$ in the following sense. If $A$ and $B$ are conjugate braids, then for every $j$ there is a permutation $\chi_j\in S_{(n^n)^j}$ such that $A_{j,i}$ is conjugate to $B_{j,\chi_j(i)}$ for all $i$ and $\chi_{j-1}=\chi_j \text{ mod }(n^n)^(j-1)$. We should therefore interpret the list of braids in curly brackets as unordered sets. As in the previous paragraph we can apply an invariant $I_n$ of braid conjugacy classes in $\mathbb{B}_n$ to the entire sequence and obtain a new invariant of braid conjugacy classes that is presumably much stronger than the original invariant $I_n$.

At this moment it is not clear how such a sequence of braids changes under stabilisation and destabilisation, i.e., how the sequence corresponding to $B\sigma_n^{\pm1}$ is related to the sequence corresponding to a $n$-strand braid $B$. The hope is that there are some properties that stay invariant, which can be used to define a sequence of link invariants analogously to the sequence of invariants of braids and braid conjugacy classes in the previous paragraphs.

\section{Computations for $n=2$ and $n=3$}
\label{sec:comps}

In this section we compute how the generators of the braid groups $\mathbb{B}_n$ act on $\mathbb{Z}_{n^n}\cong \mathbb{Z}_n$, for $n=2$ and $n=3$ via $\psi_n$. The effect of more complicated braid words on two or three strands can then be obtained by composing the permutations coming from the relevant generators.

\subsection{The case of $n=2$}
The case of $n=2$ is probably the only case that is simple enough to be done by hand. One possible parametrisation of the only generator $\sigma_1$ of $\mathbb{B}_2\cong\mathbb{Z}$ is given by $(\tfrac{1}{2}\rme^{\rmi(t+\epsilon)},-\tfrac{1}{2}\rme^{\rmi(t+\epsilon)})$, where $t$ is going from $0$ to $\pi$ and $\epsilon$ is some small positive real number. In theory choosing $\epsilon=0$ is possible, but it will become clear that this is not a good choice. This parametrisation is a loop in $C_2$ with base point $(\tfrac{1}{2}\rme^{\rmi\epsilon},-\tfrac{1}{2}\rme^{\rmi\epsilon})$ and its lifts in $V_2$ are given by $x(t)=\rme^{\rmi (t+\epsilon)}$ and $y(t)=-\rme^{\rmi (t+\epsilon)}$, two paths permuting the 2 points $x_1=\rme^{\rmi\epsilon}$ and $x_2=-\rme^{\rmi\epsilon}$ in $V_2$.
%We have to pick two points in $W_2$, say $x_1=x$ and $x_2=y$ with $\text{Re}(x)>0$ and $\text{Re}(y)<0$. Take for example $x=\rme^{\rmi \epsilon}$ and $y=-\rme^{\rmi \epsilon}$ with a small $\epsilon>0$. The only generator $\sigma_1$ of $B_2\cong\mathbb{Z}$ forms an odd braid, so any parametrisation that starts at $x_1$ and ends at one of the $x_i$ must end at $x_2$. Similarly, a parametrisation that starts at $x_2$ ends at $x_1$. The generator can be parametrised as a path that starts at $x_1$ and ends at $x_2$ via $x(t)=\rme^{\rmi (t+\epsilon)}$, where $t$ is going from $0$ to $\pi$. Similarly, a parametrisation of $\sigma_1$ that starts at $x_2$ and ends at $x_1$ is $y(t)=\rme^{\rmi (t+\epsilon+\pi)}$, where $t$ is going from $0$ to $\pi$. 

In order to compute the action we first need to find the preimage points of $x_1$ and $x_2$ under $\theta_2$, i.e., we have to find monic quadratic polynomials with one (simple) root equal to 0 and the critical value equal to $x_1=\rme^{\rmi \epsilon}$ (and $x_2=-\rme^{\rmi \epsilon}$, respectively). We thus have to solve
\begin{align}
c(c-z)&=x=\rme^{\rmi \epsilon}\nonumber\\
2c-z&=0
\end{align}
as well as
\begin{align}
c(c-z)&=y=-\rme^{\rmi \epsilon}\nonumber\\
2c-z&=0
\end{align}
for the non-zero root $z$. In the first case, we obtain $z=\pm 2\rme^{\rmi (\epsilon+\pi)/2}$, which are the $n^{n-1}=2$ preimage points of $x_1$, corresponding to the polynomials $u(u\mp 2\rme^{\rmi (\epsilon+\pi)/2})\in Z_2$. In the latter case, we have $z=\pm 2\rme^{\rmi \epsilon/2}$. We label these points as follows: $z_1=2\rme^{\rmi \epsilon/2}$, $z_2=2\rme^{\rmi(\epsilon+\pi)/2}$, $z_3=-2\rme^{\rmi \epsilon/2}$ and $z_0=-2\rme^{\rmi (\epsilon+\pi)/2}$. The reason for our choice of a positive $\epsilon$ was that the real parts of the points in this fibre are non-zero, which allows us to read off the braid words corresponding to lifted paths. 
Now we calculate how the two paths $x(t)$ and $y(t)$ in $V_2$ permute these four preimage points. We already know that the preimage points of $x_1$ get mapped to preimage points of $x_2$ and vice versa.

We solve
\begin{align}
c(t)(c(t)-z(t))&=x(t)=\rme^{\rmi (t+\epsilon)}\nonumber\\
2c(t)-z(t)&=0
\end{align}
as well as
\begin{align}
c(t)(c(t)-z(t))&=y(t)=\rme^{\rmi (t+\epsilon+\pi)}\nonumber\\
2c(t)-z(t)&=0
\end{align}
for the non-zero root $z(t)$. In the first case, we obtain $z(t)=2\rme^{\rmi (t+\epsilon\pm\pi)/2}$ and in the latter $z(t)=\pm2\rme^{\rmi (t+\epsilon)/2}$. The lifted path that starts at $z_1=2\rme^{\rmi \epsilon}$ is $2\rme^{\rmi (t+\epsilon)/2}$ and it ends at $t=\pi$ at $2\rme^{\rmi (\epsilon+\pi)/2}$, which is $z_{2}$. The lifted path that starts at $z_{2}=2\rme^{\rmi (\epsilon+\pi)/2}$ is $2\rme^{\rmi (t+\epsilon+\pi)/2}$ and it ends at $t=\pi$ at $2\rme^{\rmi (\epsilon/2+\pi)}=-2\rme^{\rmi \epsilon/2}=z_3$. Since we know that all lifted paths that start at preimage points of $x_1$ end at preimage points of $x_2$ and vice versa, this is enough to conclude that the permutation sends $z_{i}$ to $z_{i+1}$, where the index is taken mod $4$. Hence the cycle notation of the action of $\sigma_1$on four points is
\begin{equation}
\rho_{2,1}(\sigma_1)=(0\ \ 1\ \ 2\ \ 3).
\end{equation}

We could interpret each of the four paths as paths in $V_2$ and lift them too, which would tell us the action of $\sigma_1$ on the third coordinate of $\mathbb{Z}_{2}$ via $\phi_2$, i.e., the restriction of $\phi_2$ to an action on eight points. However, this would require a calculation of the eight preimage points of $\{z_0,z_1,z_2,z_3\}$, so after a number of iterations this procedure becomes unnecessarily long. Instead we are going to focus on the braids that each lifted path corresponds to, as in the construction of $\psi_2$. Recall that each of the lifted paths corresponds to a family of polynomials, one of whose roots is 0 and the other given by $z(t)$. The points $x_1$ and $x_2$ were chosen such that their preimage points are not purely imaginary, which makes it possible to read off the braid word from a parametric plot of the roots corresponding to a lifted path, namely the 2-strand braid that is formed out of the 0-strand and $z(t)$.

For the paths that start at $z_1$ and $z_3$ we find that the corresponding braid is $\sigma_1$ and for the paths that start at $z_2$ and $z_0$ we obtain the trivial braid (cf. Figure \ref{fig1}). In the notation of the previous section, $B_{1,1}=B_{1,3}=\sigma_1$ and $B_{1,2}=B_{1,0}=e$. Each of these braids can be parametrised as a loop in $C_2$, which leads to paths in $V_2$ with start and end points at $\{x_1,x_2\}$. This information is enough to compute the action of any 2-strand braid $B$ on $\mathbb{Z}_2$.
We would like to give a bit more insight into the calculations and also compute the action of $\sigma_1$ on the second coordinate of $\mathbb{Z}_4$, i.e., on 16 points.
For each of the four braids $B_{1,j}$ we obtain again an action on the preimage points of $\{x_1,x_2\}$, a trivial permutation for the trivial braids and cyclic permutations $(0\ 1\ 2\ 3)$ for the $\sigma_1$ as above.

\begin{figure}
\centering
\labellist
\large
\pinlabel a) at -50 400
\pinlabel b) at 600 400
\pinlabel c) at -50 -10
\pinlabel d) at 600 -10
\endlabellist
%\includegraphics[height=0.5cm]{ellipses-eps-converted-to}
%\hspace{-3cm}
\includegraphics[height=5cm]{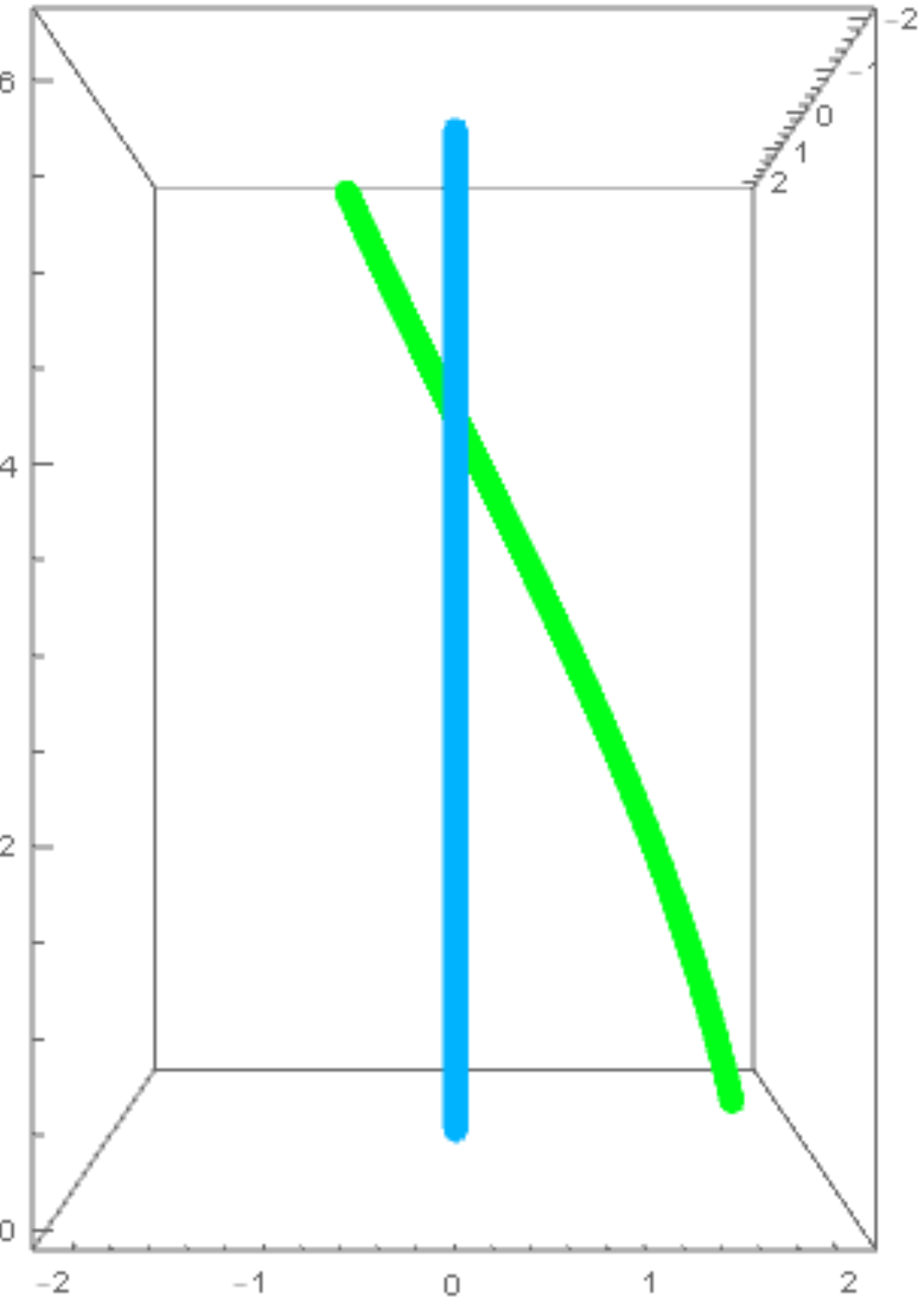}\quad\quad
\labellist
\large
\pinlabel b) at -20 400
\pinlabel d) at -20 -10
\endlabellist
\includegraphics[height=5cm]{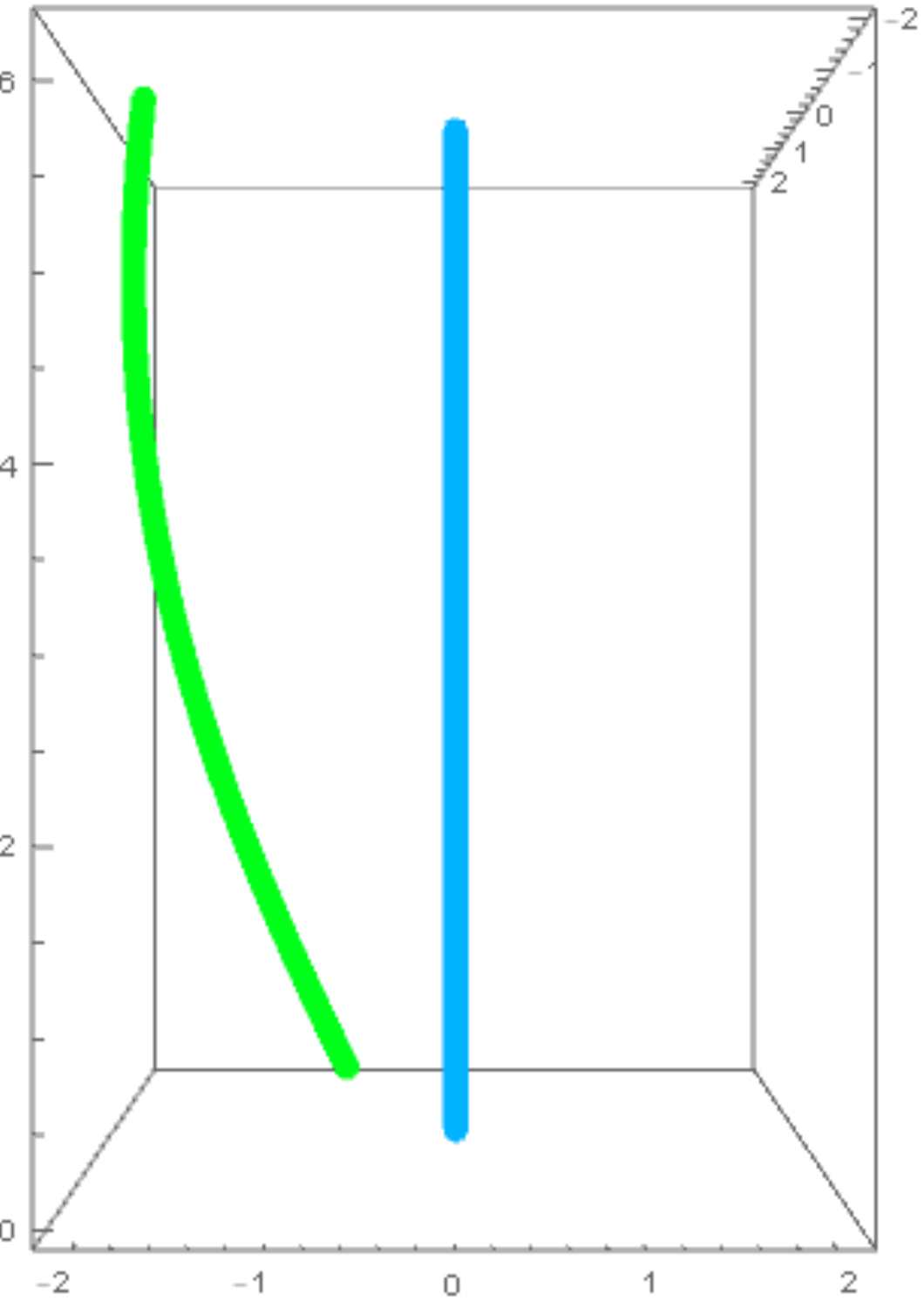}\\
%\hspace{-2cm}
\includegraphics[height=5cm]{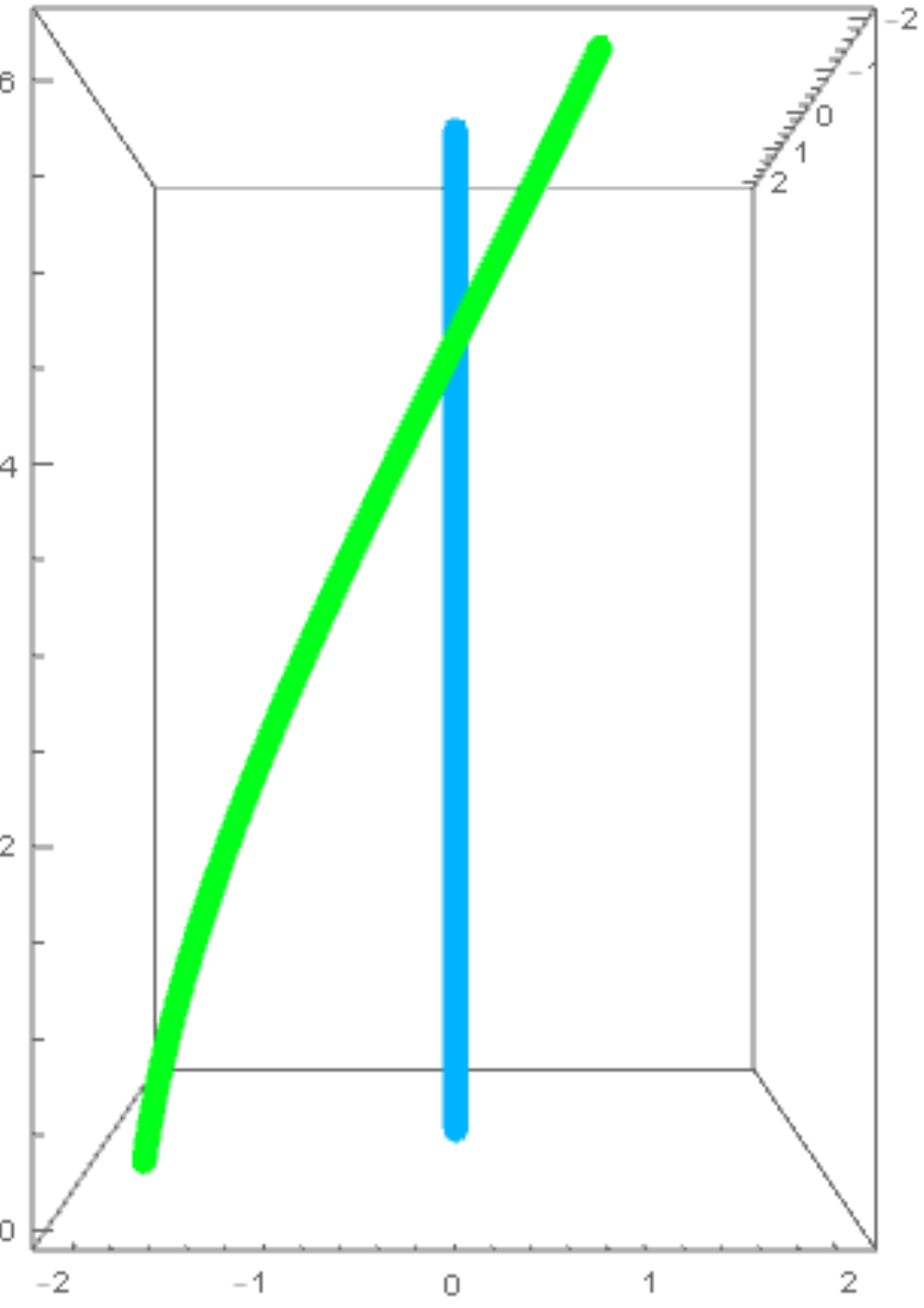}\quad\quad
\includegraphics[height=5cm]{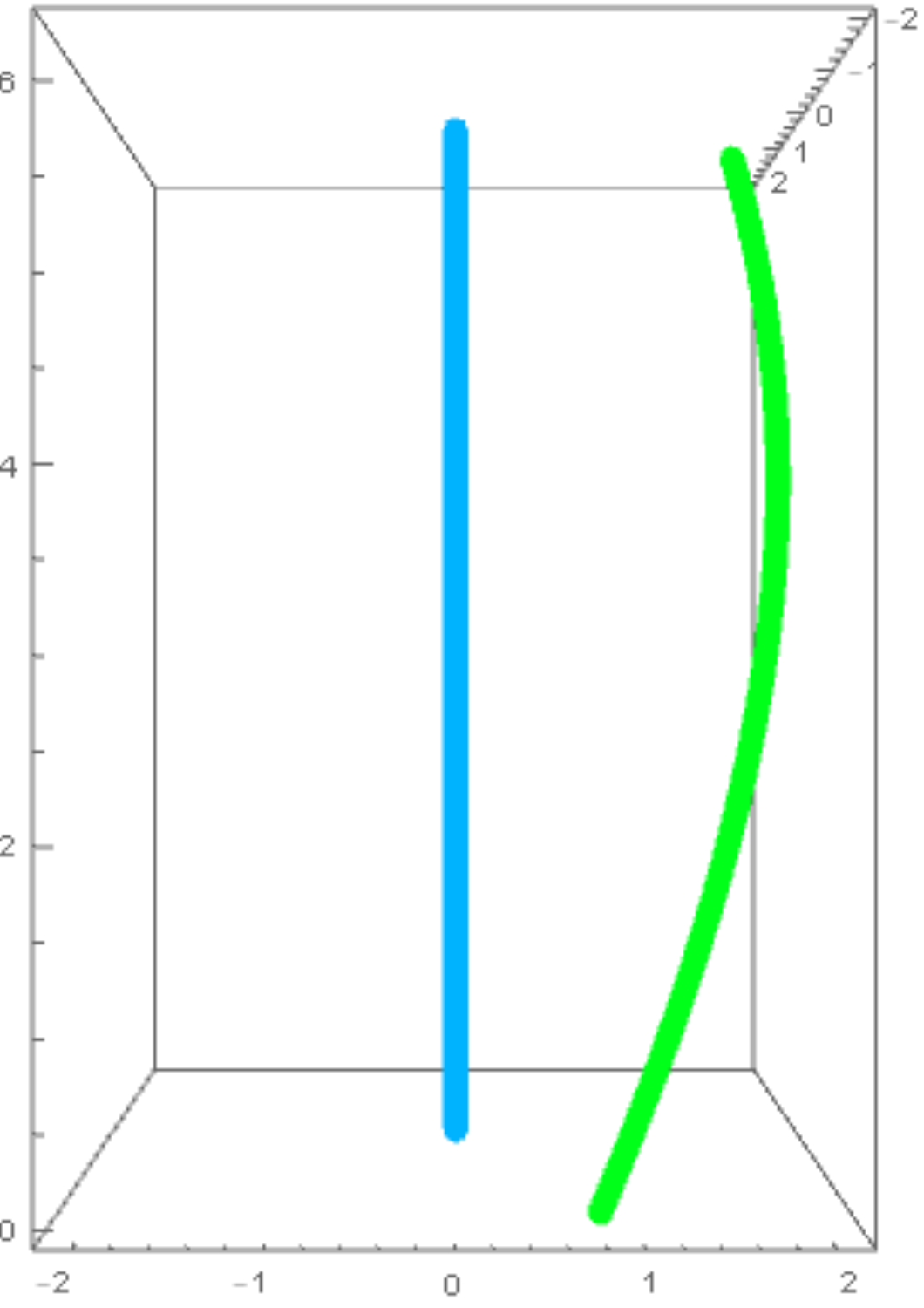}
\caption{\label{fig1}The braids that are formed by the roots of the polynomials corresponding to the lifts of $\sigma_1$ in $V_2$ with $\epsilon=0.8$. a) The lift that starts at $z_1$ has the corresponding braid $B_{1,1}=\sigma_1$. b) The lift that starts at $z_2$ corresponds to the trivial braid, $B_{1,2}=e$. c) The lift that starts at $z_3$ has the corresponding braid $B_{1,3}=\sigma_1$. d) The lift that starts at $z_0$ corresponds to the trivial braid, $B_{1,0}=e$. }
\end{figure}

We arrange these four permutations of four points into one permutation of 16 points, labelled 0 through 15, as indicated in Section \ref{sec:actions}. Points that have a label $j$ mod 4 must be mapped to a point with a label $j+1$ mod 4 in order to be consistent with the action that $\sigma_1$ induces on 4 points. Therefore, the point with the label 1 can go to four possible values, 2, 6, 10 and 14. The label 1 corresponds to the braid $B_{1,1}=\sigma_1$, which induces the cyclic permutation on four points. We thus send the point 1, the first of the points 1 mod 4, to the second possible point that it can go to, in this case 6. Similarly, the point 5, the second of points with a label 1 mod 4, is mapped to 10, which is the third possible value that it can take. In exactly the same way 9 is mapped to 14, 13 to 2, 3 (the first point 3 mod 4, which is the other residue class $j$ mod 4 whose corresponding braid is $B_{1,j}=\sigma_1$) to 4 (the second point 0 mod 4), 7 to 8, 11 to 12 and 15 to 16.
The points that have even labels are start points of the lifts $B_{1,2}$ and $B_{1,0}$, both of which are trivial and therefore induce trivial permutations. Therefore, the point 2, which is the first in the list of points with label 2 mod 4 should be mapped to the first point of the list of points 3 mod 4, i.e., 3. and so on.

In other words, the permutation on $4^j$ points dictates how the residue classes mod $4^j$ are permuted. Therefore, every point $x\in\{0,1,2,\ldots,4^{j+1}-1\}$ has 4 possible image values. Which one of these it is mapped to is determined by the permutation on 4 points that is induced by the braid $B_{j,x\text{ mod }4^j}$.
 
Therefore the permutation of 16 points induced by the generator $\sigma_1$ is given by
\begin{align}
4k+1&\mapsto 4(k+1) +2 &\text{ mod }16,\nonumber\\
4k+2&\mapsto 4k+3 &\text{ mod }16,\nonumber\\
4k+3&\mapsto 4(k+1) &\text{ mod }16,\nonumber\\
4k&\mapsto 4k+1 &\text{ mod }16,
\end{align}
where $k=0,1,2,3$. In cycle notation this is
\begin{equation}
\label{eq:16}
(0\ 1\ 6\ 7\ 8\ 9\ 14\ 15)(2\ 3\ 4\ 5\ 10\ 11\ 12\ 13).
\end{equation}

For a general 2-strand braid $B=\sigma_1^k$, $k\in\mathbb{Z}$ the action on $\mathbb{Z}_2$ can now be described as follows. For the first coordinate of $\mathbb{Z}_2$ there are two points, 0 and 1, and $B$ permutes non-trivially if and only if $k$ is odd. The action of $B$ on the four possible choices of the second coordinate is given by $\tau^k$, where $\tau$ is the cyclic permutation of four elements, $(1\mapsto2\mapsto3\mapsto0\mapsto1)$. Note that for a given parity of $k$ there are again only two possible permutations and the action is trivial if and only if $k$ is divisible by four. The braid words that are formed by the four lifted paths are each $\sigma_1^{k/2}$ if $k$ is even and twice $\sigma_1^{\lfloor k/2\rfloor}$ and twice $\sigma_1^{\lceil k/2\rceil}$ if $k$ is odd. Using the notation of Section \ref{sec:actions}, we have 
\begin{equation}
B_{1,1}=B_{1,2}=B_{1,3}=B_{1,0}=\sigma_1^{k/2}
\end{equation} 
if $k$ is even and 
\begin{align}
B_{1,1}=B_{1,3}&=\sigma_1^{\lceil k/2\rceil},\nonumber\\
B_{1,2}=B_{1,0}&=\sigma_1^{\lfloor k/2\rfloor},
\end{align}
if $k$ is odd.

%%%%%%%%%

If we take for example $B=\sigma_1^{12}$, then the action on the four preimage points of $\{x_1,x_2\}$ is given by the twelfth power of the cyclic permutation, i.e., the identity. The four lifted paths each has the braid $\sigma_1^6$. Each of these paths gives us a permutation of the four preimage points of $\{x_1,x_2\}$. Since all four braids are identical, we obtain the same permutation every time, the 6th power of the cyclic permutation, i.e., $(1\mapsto3\mapsto1, 2\mapsto4\mapsto2)$. These can be arranged into a permutation of 16 elements that is compatible with $\tau_1(B)=\text{id}$, namely
\begin{equation}
\rho_{2,2}(\sigma_1^{12})=(1\  9)(5\  13)(2\ 10)(6\ 14)(3\ 11)(7\ 15)(4\ 12)(8\ 0).
\end{equation} 
This permutation was obtained as follows. Because $\tau_1(B)=\text{id}$ every number must be mapped to a number which lies in the same residue class mod 4. Since we are constructing a permutation on 16 elements, there are four possible images of each number, for example 1 could go to 1, 5, 9 or 13. The permutation $(1\mapsto3\mapsto1, 2\mapsto0\mapsto2)$ now tells us how these four numbers are permuted, namely the first and the third number in this residue class (1 and 9) are swapped and the second and the fourth number in this residue class (5 and 13) are swapped. Analogous computations apply to the other residue classes. 

The braid words of the 16 lifted paths are each $\sigma_1^3$, which we can use in a similar fashion to construct the action on 64 elements as
\begin{equation}
16k+i\mapsto 16(k+3)+\rho_{2,2}(\sigma_1^{12})(i),
\end{equation}
since $\sigma(\sigma_1^3)=(0\ 1\ 2\ 3)^3$. The braids corresponding to the lifts of $\sigma_1^3$ are either $\sigma_1^2$ or $\sigma_1$ depending on the start point. We can use these to compute the action on 256 points and then $4^j$ points by repeating the process.
%Therefore the permutation on 64 elements is given by
%\begin{equation}
%(1\ 57)(17\ 41)(33\ 25)(49\ 9)(5\ 61)(21\ 45)(37\ 29)(43\ 13)(2\ 58)(18\ 42)(34\ 26)(50\ 10)...
%\end{equation} 

%We can parametrise each of these braids as paths in $W_2$ starting and ending at one of the $x_i$,

%lift them and look at how the 4 lifted paths permute the four preimage points of $\{x_1,x_2\}$. Since we do this for four braids, we obtain a permutation of 16 elements.
 
\begin{proposition}
\label{prop:free}
The action $\psi_2$ of $\mathbb{B}_2$ on $\mathbb{Z}_2$ is faithful.
\end{proposition}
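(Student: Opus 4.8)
The plan is to exploit the recursive, automaton-like structure of $\psi_2$ together with the explicit computation carried out above for the generator, and then to run an infinite descent on the exponent. Since $\mathbb{B}_2=\langle\sigma_1\rangle\cong\mathbb{Z}$, proving faithfulness amounts to showing that $\psi_2(\cdot,\sigma_1^k)=\mathrm{id}$ forces $k=0$. The first ingredient is already in hand: the permutation induced on the first coordinate $\mathbb{Z}/4\mathbb{Z}$ is $\tau^k$, where $\tau=(0\ 1\ 2\ 3)$. Hence $\psi_2(\cdot,\sigma_1^k)$ is nontrivial whenever $4\nmid k$, and in particular whenever $k$ is odd; this disposes of all exponents not divisible by $4$ at once.

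The crucial second ingredient is a self-similarity relating $\psi_2(\cdot,\sigma_1^k)$ to $\psi_2(\cdot,\sigma_1^{k/2})$ when $k$ is even. Writing a point of $\mathbb{Z}_4$ as $(d_0,y)$ with first digit $d_0\in\mathbb{Z}/4\mathbb{Z}$ and tail $y\in\mathbb{Z}_4$, I would establish that for even $k$
\[
\psi_2\big((d_0,y),\sigma_1^k\big)=\big(\tau^k(d_0),\,\psi_2(y,\sigma_1^{k/2})\big).
\]
This rests on the fact, computed above, that for even $k$ all four level-one lifts coincide, $B_{1,0}=B_{1,1}=B_{1,2}=B_{1,3}=\sigma_1^{k/2}$. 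Consequently the braid sequence of $\sigma_1^k$ beyond the first level is four interleaved copies of the braid sequence of $\sigma_1^{k/2}$, and feeding this into the inductive definition $(n^n)^jk+i\mapsto(n^n)^j\sigma(B_{j,i})(k)+\rho_{n,j}(B)(i)$ of $\psi_2$ yields the displayed identity, with the tail action independent of $d_0$.

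With these two ingredients the statement follows by descent on the $2$-adic valuation $v_2(k)$. Suppose $\psi_2(\cdot,\sigma_1^k)=\mathrm{id}$ with $k\neq0$. The first coordinate forces $\tau^k=\mathrm{id}$, i.e.\ $4\mid k$, so in particular $k$ is even and the recursion applies; since $\tau^k=\mathrm{id}$ it collapses to $(d_0,y)\mapsto(d_0,\psi_2(y,\sigma_1^{k/2}))$, whence $\psi_2(\cdot,\sigma_1^{k/2})=\mathrm{id}$. Iterating, $\psi_2(\cdot,\sigma_1^{k/2^j})=\mathrm{id}$ and $4\mid k/2^j$ for every $j\geq0$, so that $2^{\,j+2}\mid k$ for all $j$. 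This is impossible for $k\neq0$, giving $k=0$ and hence faithfulness.

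Finally, a word on where the difficulty lies: the descent is elementary, so the real work is in justifying the recursion, namely that for even $k$ the tail action is \emph{exactly} $\psi_2(\cdot,\sigma_1^{k/2})$, uniformly in $d_0$. I expect this uniformity to be the main obstacle, as it requires tracking the organization of the lifted braids of $\sigma_1^k$ at all deeper levels and verifying that it reproduces the full sequence of $\sigma_1^{k/2}$. It is precisely here that the hypothesis that $k$ is even is indispensable, since it is what makes the four lifts agree; the odd case, where the lifts differ, never enters the descent because it has already been eliminated by the first-coordinate computation.
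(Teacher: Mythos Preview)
Your argument is correct and is essentially the same as the paper's: both hinge on the fact that for even $k$ all four first-level lifts of $\sigma_1^k$ equal $\sigma_1^{k/2}$, so the exponent halves with each level, and nontriviality is detected once the exponent is no longer divisible by $4$. The only difference is packaging: you phrase it as a self-similarity recursion $\psi_2((d_0,y),\sigma_1^k)=(\tau^k(d_0),\psi_2(y,\sigma_1^{k/2}))$ followed by infinite descent, whereas the paper writes $k=2^mq$ with $q$ odd and goes directly to the level $m$ coordinate, where all lifts are $\sigma_1^{2q}$ and the induced permutation is visibly nontrivial.
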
 
\begin{proof}
Consider a braid $B=\sigma_1^k$ with $k\in\mathbb{Z}$ and $k\neq 0$, i.e., $k=2^m q$ for some $m\in\mathbb{N}_0$ and some odd $q\in\mathbb{Z}$. As we have seen above, the induced permutation is non-trivial if $m=0$. If $m>0$, the computations from the previous paragraphs tell us that after the $m-1^{\text{th}}$ iteration of the lifting procedure results in $4^{m-1}$ paths in $V_2$ that each form the braid $\sigma_1^{2q}$. In the notation of the previous section, we have $B_{m,j}=\sigma_1^{2q}$ for all $j=1,2,\ldots,4^m$.

This means that the permutation on the $m^{\text{th}}$ coordinate of $\mathbb{Z}_{n^n}$ is non-trivial. For example, the elements 1, $4^{m-1}+1$, $2\times4^{m-1}+1$ and $3\times 4^{m-1}+1$ are permuted in a non-trivial way, namely 
\begin{equation}
(1\mapsto4^{m-1}+1\mapsto2\times4^{m-1}+1\mapsto3\times4^{m-1}+1)^{2q\text{ mod }4},
\end{equation}
where $q$ is odd.
Therefore, the trivial braid (with $k=0$) is the only braid that leads to a trivial permutation of the $n$-adic integers. In other words, the action on $\mathbb{Z}_4$ is faithful and hence $\psi_2$ is faithful on $\mathbb{Z}_2$.
\end{proof}

We can see from the calculations that the action is not transitive. Recall from Equation (\ref{eq:16}) the permutation of 16 points that is induced by the generator $\sigma_1$, i.e., the action of $\sigma_1$ on the second coordinate of $\mathbb{Z}_{4}$. Since there are two distinct cycles, no power of $\sigma_1$ can map a $4$-adic integer whose second coordinate is 1 to a 4-adic integer whose second coordinate is 2. 

\begin{remark}
Note that the fact that the critical points of a polynomial in $Z_n$ are branch points with deficiency 2 is reflected in the halving of the exponent of $\sigma_1$ with each lifting procedure.
\end{remark}

\begin{remark}
We mentioned before that we can use the other embedding of $Z_n$ into $V_n$, the one that sends a polynomial to its set of critical points rather than its non-zero roots, to define actions on $\mathbb{Z}_n$ completely analogously to the constructions in Section \ref{sec:actions}. We can perform similar calculations for the  analogue of $\psi_n$ as above. In principle, we could obtain different permutations and lifted braids (see the case of $n=3$). However, in the case of $n=2$, the critical point of a monic polynomial with constant term equal to 0 is precisely half of the non-zero root. Hence, we obtain the same permutations and braids.
\end{remark}

\subsection{The case of $n=3$}

For the case of $n=3$ we have to consider the two generators $\sigma_1$ and $\sigma_2$. As a base point in $C_3$ we choose (quite arbitrarily) $\{0,\rme^{\rmi \pi /4}, 2\}$. The correspondingly chosen points in $V_3$ are therefore $x_1=\{\rme^{\rmi \pi/4}, 2\}$, $x_2=\{-\rme^{\rmi \pi/4},2-\rme^{\rmi \pi/4}\}$ and $x_3=\{-2,\rme^{\rmi\pi/4}-2\}$. Note that the labels are chosen such that $x_i$ has 0 in $i^{\text{th}}$ position.

%In order to perform our computations we first have to fix three points in $V_3$, namely $x_i$, $i=1,2,3,$ with 0 in $i^{\text{th}}$ position. As we have seen in the example of $n=2$ we need something more. In order to be able to read off braid words from the lifted paths, we require that the preimage points of $\{x_1,x_2,x_3\}$ under $\theta_3$ also are tuples with distinct non-zero real parts.

%One such choice of points in $W_3$ is $x_1=(3/2 \rme^{\rmi \pi 35/100}-1/2,2)$, $x_2=(-2,2)$ and $x_3=(-2,-3/2 \rme^{\rmi \pi 35/100}+1/2)$. We have chosen these points such that $x_1=-x_3$ and $x_2=-x_2$. This means that we can take the negative of a parametrisation of $\sigma_1$ that is starting at $x_i$ as a parametrisation of $\sigma_2$ that is starting at $x_{-i\text{ mod }3}$.

The preimage set $\theta_3^{-1}(\{x_1,x_2,x_3\})$ consists of $3^{3-1}=27$ points in $Z_3$. These are, tuples of complex numbers, which can be identified by solving the corresponding system of polynomial equations, $\{f(c_1),f(c_2)\}=x_j$, $f'(c_i)=0$ for the roots of $f$, as
\begin{align}
z_0&=\{-2.43868-\rmi0.93710, -1.95413+\rmi0.64884\},\nonumber\\
z_1&=\{2.01732+\rmi0.49143, 2.56659-\rmi0.71767\},\nonumber\\
z_2&=\{1.59961-\rmi0.51396, 2.81046+\rmi0.04969\},\nonumber\\
z_3&=\{0.48455+\rmi1.58593, 2.43868+\rmi0.93710\},\nonumber\\
z_4&=\{1.32174-\rmi0.12887, 1.90481+\rmi1.86390\},\nonumber\\
z_5&=\{1.09356-\rmi0.76680, 1.44826-\rmi2.40909\},\nonumber\\
z_6&=\{0.41516-\rmi2.01675, 2.03089-\rmi1.64342\},\nonumber\\
z_7&=\{0.77247+\rmi1.08022, 1.43425-\rmi1.50134\},\nonumber\\
z_8&=\{0.11729+\rmi1.33045, 1.36220+\rmi2.45878\},\nonumber\\
z_9&=\{0.40779+\rmi2.58051, 1.53898+\rmi1.36791\},\nonumber\\
z_{10}&=\{-0.77247-\rmi1.08022, 0.66178-\rmi2.58157\},\nonumber\\
z_{11}&=\{-0.11729-\rmi1.33045, 1.24491+\rmi1.12832\},\nonumber\\
z_{12}&=\{-0.48455-\rmi1.58593, 1.95413-\rmi0.64884\},\nonumber\\
z_{13}&=\{-1.32174+\rmi0.12887, 0.58308+\rmi1.99277\},\nonumber\\
z_{14}&=\{-1.59961+\rmi0.51396, 1.21085+\rmi0.56365\},\nonumber\\
z_{15}&=\{-0.41516+\rmi2.01675, 1.61573+\rmi0.37333\},\nonumber\\
z_{16}&=\{-2.01732-\rmi0.49143, 0.54927-\rmi1.20909\},\nonumber\\
z_{17}&=\{-1.09356+\rmi0.76680, 0.35470-\rmi1.64229\},\nonumber\\
z_{18}&=\{-0.40780-\rmi2.58051, 1.13118-\rmi1.21260\},\nonumber\\
z_{19}&=\{-1.43425+\rmi1.50134, -0.66178+\rmi2.58157\},\nonumber\\
z_{20}&=\{-1.44826+\rmi2.40909, -0.35470+\rmi1.64229\},\nonumber\\
z_{21}&=\{-1.53898-\rmi1.36791, -1.13118+\rmi1.21260\},\nonumber\\
z_{22}&=\{-1.90481-\rmi1.86390, -0.58308-\rmi1.99277\},\nonumber\\
z_{23}&=\{-1.36220-\rmi2.45878, -1.24491-\rmi1.12832\},\nonumber\\
z_{24}&=\{-2.03089+\rmi1.64342, -1.61573-\rmi0.37333\},\nonumber\\
z_{25}&=\{-2.56659+\rmi0.71767, -0.54927+\rmi1.20909\},\nonumber\\
z_{26}&=\{-2.81046-\rmi0.04969, -1.21085-\rmi0.56365\}.
%z_0&=(-1.8676-\rmi 0.3641,-2.9070+\rmi 0.7149),\nonumber\\
%z_1&=(1.8676+\rmi 0.3641,2.9070-\rmi 0.71491),\nonumber\\
%z_2&=(1.7321,3.4641),\nonumber\\
%z_3&=(2.9070-\rmi0.7149,1.0394-\rmi1.0790),\nonumber\\
%z_4&=(1.4541+\rmi 0.3606,2.0726+\rmi2.1601),\nonumber\\
%z_5&=(0.8660+\rmi 1.5000,1.7321+\rmi 3.0000),\nonumber\\
%z_6&=(2.0726+\rmi 2.1601,0.6185+\rmi 1.7995),\nonumber\\
%z_7&=0.4147+\rmi 1.4396,1.2491-\rmi 1.4354),\nonumber\\
%z_8&=(0.8660-\rmi 1.5000,1.7321-\rmi 3.0000),\nonumber\\
%z_9&=(1.2491-\rmi 1.4353,0.8344-\rmi2.8750),\nonumber\\
%z_{10}&=(-0.4147-\rmi 1.4396,0.8344-\rmi2.8750),\nonumber\\
%z_{11}&=(-0.8660+\rmi 1.5000,0.8660-\rmi 1.5000),\nonumber\\
%z_{12}&=(1.4541+\rmi 0.3606,-0.6185-\rmi 1.7995),\nonumber\\
%z_{13}&=(-1.8676-\rmi 0.3641,1.0394-\rmi 1.0790),\nonumber\\
%z_{14}&=(-1.7321,1.7321).\nonumber\\
%z_{15}&=(1.8676+\rmi 0.3641,-1.0394+\rmi 1.0790),\nonumber\\
%z_{16}&=(-1.4541-\rmi 0.3606,0.6185+\rmi 1.7995),\nonumber\\
%z_{17}&=(-0.8660-\rmi 1.5000,0.8660+\rmi 1.5000),\nonumber\\
%z_{18}&=(0.4147+\rmi 1.4396,-0.8344+\rmi 2.8750),\nonumber\\
%z_{19}&=(-1.2491+\rmi1.4354,-0.8344+\rmi2.8750),\nonumber\\
%z_{20}&=(-1.7321+\rmi 3.0000,-0.8660+\rmi 1.5000),\nonumber\\
%z_{21}&=(-0.4147-\rmi1.4396,-1.2491+\rmi1.4354),\nonumber\\
%z_{22}&=(-2.0726-\rmi 2.1601,-0.6185-\rmi1.7995),\nonumber\\
%z_{23}&=(-1.7321-\rmi 3.0000,-0.8660-\rmi 1.5000),\nonumber\\
%z_{24}&=(-1.4541-\rmi0.3606,-2.0726-\rmi2.1601),\nonumber\\
%z_{25}&=(-2.9070+\rmi 0.7149,-1.0394+\rmi1.0790),\nonumber\\
%z_{26}&=(-3.4641,-1.7321).
\end{align}

Numbers are rounded to five decimal points. The labels are such that the preimage points of $x_i$ have a label from the residue class $i\text{ mod }3$. 

The preimage set of one fixed $x_i$ has some symmetries. If $y=(y_1,y_2)\in Z_3$ maps to $x_i$, so does $\rme^{2\pi\rmi/3}y=(\rme^{2\pi\rmi/3}y_1,\rme^{2\pi\rmi/3}y_2)$. This, or rather its analogue with $\rme^{2\pi\rmi/n}$, is in fact easy to check for polynomials of any degree $n$, not just $n=3$. The other symmetry is a bit more mysterious (at least to the author). It seems like if $y=(y_1,y_2)\in Z_3$ maps to $x_i$, there is another point $y'=(y_1',y_2')\in Z_3$, which maps to $x_i$ and has $y_1'$ equal to $-y_1$. Furthermore, the point $(-y_2,-y_2')$ is also in $Z_3$ and also maps to $x_i$. The few examples that we have explicitly studied seem to suggest that with these symmetries all preimage points of $x_i$ can be calculated starting from one preimage point $y\in\theta_3^{-1}(x_i)$. 

%The generator $\sigma_1$ can be parametrised as 
%\begin{align}
%(v_1(t),v_2(t),0)&=\left(\frac{3}{2} \rme^{\rmi \pi \left(\frac{35}{100}+t\right)}-\frac{1}{2}, 2,0\right),&t\in\left[0,\frac{65}{100}\right],\label{eq:para1}\\
%(v_1(t),v_2(t),0)&=\left(-\frac{3}{2} \rme^{\rmi \pi t}-\frac{1}{2}, 2,0\right),&t\in\left[0,\frac{135}{100}\right],%\label{eq:para2}\end{align}
%\begin{align}
%(v_1(t),v_2(t),0)&=\left(\frac{\sqrt{5.0951}}{2} \cos(t\pi+2.5079)+\frac{1}{2}\left(-2-\frac{3}{2}\cos(\pi \frac{35}{100})+1/2\right)\right.&\nonumber\\
%&+\rmi \left(\frac{\sqrt{5.0951}}{2}\sin(t\pi+2.5079)\right)-\frac{3}{4}\sin\left(\pi\frac{35}{100}\right),\nonumber\\
%&\frac{\sqrt{5.0951}}{2} \cos((t+1)\pi+2.5079)+\frac{1}{2}\left(-2-\frac{3}{2}\cos(\pi \frac{35}{100})+1/2\right)\nonumber\\
%&\left.+\rmi \left(\frac{\sqrt{5.0951}}{2}\sin((t+1)\pi+2.5079)\right)-\frac{3}{4}\sin\left(\pi\frac{35}{100}\right),0\right),\quad t\in\left[0,1\right].\label{eq:para3}
%\end{align}
For a given parametrisation of $\sigma_1$ as a loop in $C_3$ for example
\begin{equation}
\{y_1(t),y_2(t),y_3(t)\}=\{\frac{1}{2}\rme^{\rmi (\pi/4+t)}+\frac{1}{2}\rme^{\rmi \pi/4},-\frac{1}{2}\rme^{\rmi (\pi /4+t)}+\frac{1}{2}\rme^{\rmi \pi/4},2\},
\end{equation}
where $t$ is going from 0 to $\pi$, the corresponding paths $\{v_1(t),v_2(t)\}$ in $V_3$ with start and endpoints in $\{x_1,x_2,x_3\}$ are $\{y_2(t)-y_1(t),y_3(t)-y_1(t)\}$, $\{y_1(t)-y_2(t),y_3(t)-y_2(t)\}$ and $\{y_1(t)-y_3(t),y_2(t)-y_3(t)\}$. 

The paths permute the start and endpoints $x_i$ according to the permutation representation of $\sigma_1$, i.e., the path that starts at $x_1$ ends at $x_2$ and vice versa, while the path that starts at $x_3$ is a loop. We can lift these paths in $V_3$ to paths in $Z_3$ with start and end points in $\theta_3^{-1}(\{x_1,x_2,x_3\})$. We do this by solving a system of polynomial equations,
\begin{align}
f_t(c_k)=c_k(t)\prod_{i=1}^2(c_k(t)-u_i(t))&=v_k(t),&k=1,2,\nonumber\\
\frac{\partial f_t}{\partial u}(c_k(t))&=0,&k=1,2,\nonumber
\end{align}
for $u_i$ for values $t=j\pi/100$, $j=0,1,\ldots,100$ for each of the three paths. With a set of 101 data points for each strand, it is easy to identify the lifted paths in $Z_3$ and the corresponding braids that are formed by the roots of the polynomials that make up the paths.

\begin{figure}[h]
\centering
\labellist
\large
\pinlabel a) at -100 400
\pinlabel b) at 350 400
\endlabellist
%\includegraphics[height=0.5cm]{ellipses-eps-converted-to}
%\hspace{-3cm}
\includegraphics[height=5cm]{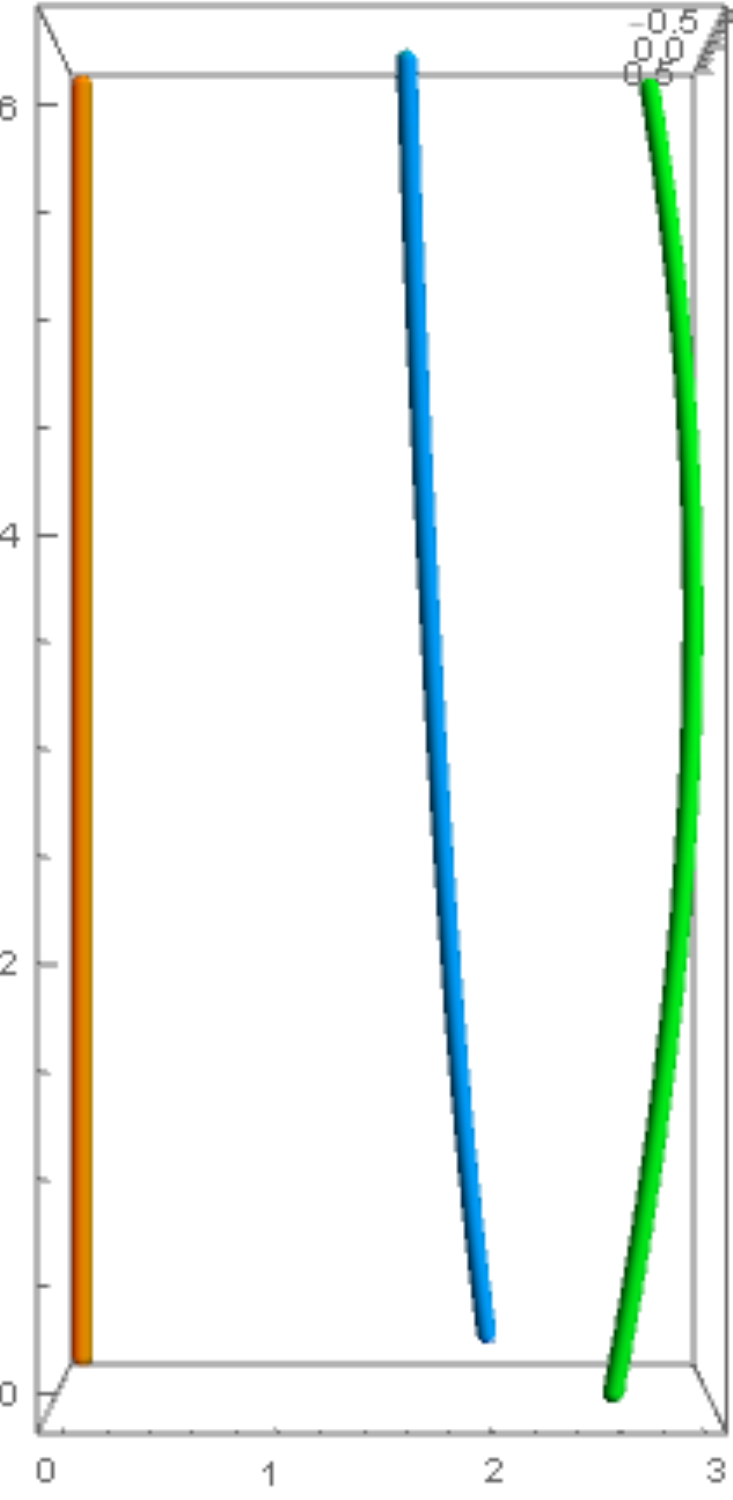}\qquad\qquad\qquad
\includegraphics[height=5cm]{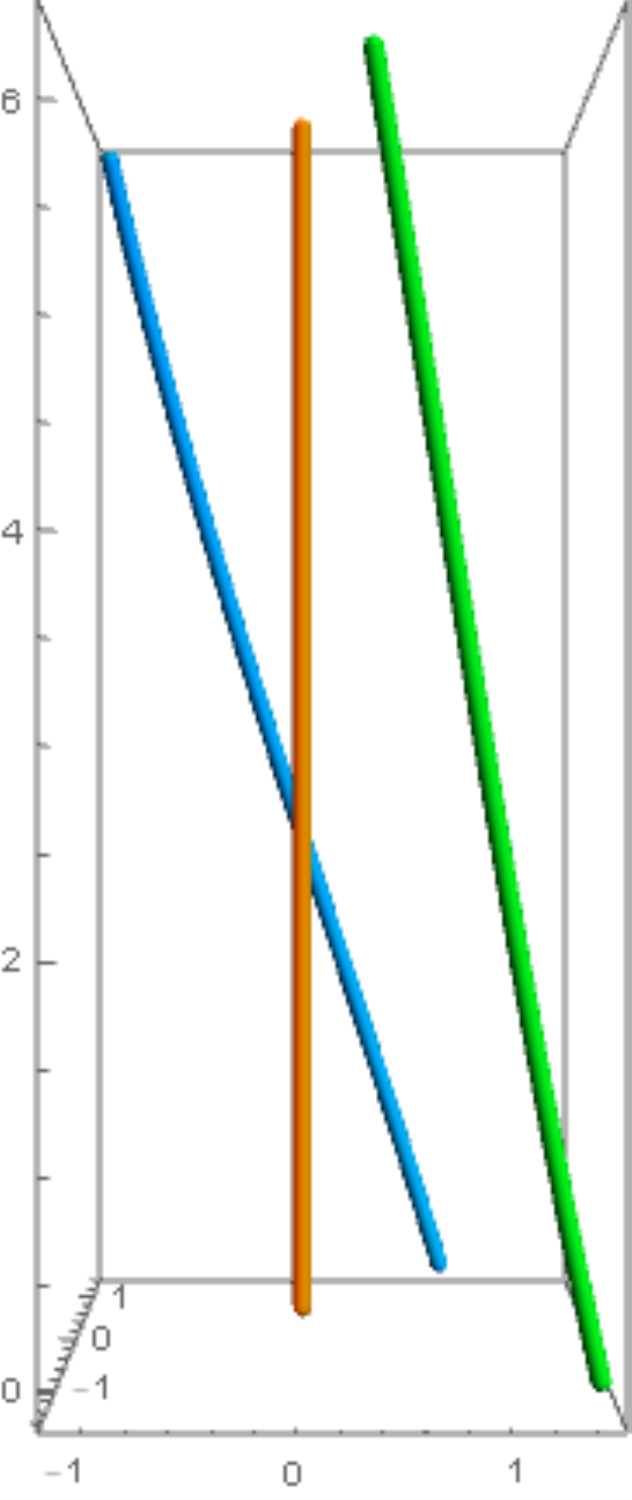}
\caption{\label{fig2}Braids that are formed by the roots of the polynomials corresponding to the lifts of $\sigma_1$ in $Z_3$. a) $B_{1,1}$ is the trivial braid. The configuration of the roots (strands) at $t=0$ is that of $z_1$ together with the orange strand as the $0$-strand. At $t=2\pi$ the non-zero strands are in the position of $z_2$. b) $B_{1,3}=\sigma_1$. It starts at $z_3$ and ends at $z_{9}$.}
\end{figure}

For example, the lift that starts at $z_1$ ends at $z_2$. A braid that interpolates the 101 data points for the set of roots of the polynomials that form this lift is shown in Figure \ref{fig2}a) and it is obviously the trivial braid. Performing this lifting procedure for every $z_i$ gives us the permutation of the 27 points in $\theta_3^{-1}(\{x_1,x_2,x_3\})$, namely

\begin{align}
\rho_{3,1}(\sigma_1)=&(1\ 2)(3\ 9\ 15)(4\ 8\ 13\ 11)(5\ 7\ 17\ 10)(6\ 12\ 18)(14\ 25\ 26\ 16)\nonumber\\
&(19\ 20)(21\ 24\ 0)(22\ 23).
\end{align}

Note that because $\pi(\sigma_1)=(1\ 2)$, every lift that starts at a point with index 1 mod 3 must end at a point with index 2 mod 3 and vice versa. Therefore the permutation above can be split into those cycles that only contain numbers 0 mod 3 and those that permute numbers that are not 0 mod 3.

From the lifted paths we find the braid words of the corresponding braids
\begin{align}
\label{eq:aas}
(A_{1,0},A_{1,1},A_{1,2},\ldots,A_{1,26})=(&e,e, \sigma_2, e, e, e, \sigma_1^{-1}, \sigma_1,\sigma_1,\sigma_1,\sigma_1,\sigma_1,e,e,\sigma_2,\sigma_1^{-1},\nonumber\\
&e,e,\sigma_1,\sigma_1,e,\sigma_1,e,\sigma_1,\sigma_1^{-1},e,\sigma_2)
\end{align}
where $e$ denotes the trivial braid and $A_{1,i}$ is the braid corresponding to the lift that starts at $z_i$.

%Since $x_1=-x_3$ and $x_2=-x_2$, the negatives of the parametrisations of $\sigma_1$ in Equations (\ref{eq:para1})--(\ref{eq:para3}) can be taken as parametrisations of $\sigma_2$. For example, $z_0$ is the negative of $z_1$. The lift of the parametrisation in Equation (\ref{eq:para1}) that starts at $z_1$ is the trivial braid and ends at $z_2$. Therefore, the lift of the negative of Equation (\ref{eq:para1}) that starts at $z_0$ ends at the negative of $z_2$, which is $z_{26}$, and the corresponding braid is the trivial braid.

Similarly for $\sigma_2$ we obtain the permutation
\begin{align}
\rho_{3,1}(\sigma_2)=&(1\ 4\ 7)(2\ 3\ 14\ 12)(5\ 6)(8\ 9)(10\ 16\ 22)(11 \ 21\ 23\ 18)\nonumber\\
&(13\ 19\ 25)(15\ 20\ 24\ 17)(26\ 0)
\end{align}

Note that because $\pi(\sigma_2)=(2\ 3)$, every lift that starts at a point with index 2 mod 3 must end at a point with index 0 mod 3 and vice versa. Therefore the permutation above can be split into those cycles that only contain numbers 1 mod 3 and those that permute numbers that are not 1 mod 3.

The braids corresponding to the lifts are
\begin{align}
\label{eq:bbs}
(B_{1,0},B_{1,1},B_{1,2},\ldots,B_{1,26})=(&\sigma_1,\sigma_2^{-1},e,\sigma_1,\sigma_2,e,\sigma_2,e,\sigma_2,e,e,\sigma_2,\sigma_1,\sigma_2,e,\sigma_2,\nonumber\\
&\sigma_2^{-1},e,e,e,e,e,\sigma_2,\sigma_2,\sigma_2,\sigma_2^{-1},e).
\end{align}

\textbf{Example:} We consider the braid $\beta=\sigma_1^{12}\sigma_2^{12}\sigma_{1}^{-12}\sigma_2^{-12}$. Then
\begin{equation}
\rho_{3,1}(\beta)=\rho_{3,1}(\sigma_1)^{12}\rho_{3,1}(\sigma_2)^{12}\rho_{3,1}(\sigma_1)^{-12}\rho_{3,1}(\sigma_2)^{-12}=\text{id}.
\end{equation}
Some of the $\beta_{1,i}$ are trivial braids. For example, 
\begin{align}
\beta_{1,1}&=\prod_{k=0}^{11} A_{1,\left(\rho_{3,1}(\sigma_1)\right)^k(1)} \prod_{k=0}^{11}B_{1,\left(\rho_{3,1}(\sigma_2)\right)^k(1)}\prod_{k=1}^{12} A^{-1}_{1,\left(\rho_{3,1}(\sigma_1)\right)^{-k}(1)} \prod_{k=1}^{12}B^{-1}_{1,\left(\rho_{3,1}(\sigma_2)\right)^{-k}(1)}\nonumber\\
%A_{1,1}A_{1,\rho_{3,1}(\sigma_1)(1)}A_{1,\left(\rho_{3,1}(\sigma_1)\right)^2(1)}\ldots A_{1,\left(\rho_{3,1}(\sigma_1)\right)^{11}(1)}B_{1,1}B_{1,\rho_{3,1}(\sigma_2)(1)}B_{1,\left(\rho_{3,1}(\sigma_2)\right)^2(1)}\ldots B_{1,\left(\rho_{3,1}(\sigma_2)\right)^{11}(1)}\nonumber\\
%&\ \ \ \ \ A_{1,\left(\rho_{3,1}(\sigma_1)\right)^{-1}(1)}^{-1}A_{1,\left(\rho_{3,1}(\sigma_1)\right)^{-2}(1)}^{-1}\ldots A_{1,\left(\rho_{3,1}(\sigma_1)\right)^{-12}(1)}^{-1}B_{1,\left(\rho_{3,1}(\sigma_2)\right)^{-1}(1)}^{-1}B_{1,\left(\rho_{3,1}(\sigma_1)\right)^{-2}(1)}^{-1}\ldots B_{1,\left(\rho_{3,1}(\sigma_1)\right)^{-12}(1)}^{-1}\nonumber\\
&=(A_{1,1}A_{1,2})^6(B_{1,1}B_{1,4}B_{1,7})^4(A_{1,2}^{-1}A_{1,1}^{-1})^6(B_{1,7}^{-1}B_{1,4}^{-1}B_{1,1}^{-1})^4\nonumber\\
&=\sigma_2^6(\sigma_2^{-1}\sigma_2)^4\sigma_{2}^{-6}(\sigma_2^{-1}\sigma_{2})^4\nonumber\\
&=e.
\end{align} 
We have
\begin{equation}
(\beta_{1,0},\beta_{1,1},\beta_{1,2},\ldots,\beta_{1,26})=(e,e,S,e,e,T,e,e,T,e,e,T,e,e,S,e,e,T,e,e,T,e,e,T,e,e,S),
%(e,e,S,e,e,T,e, e, T,e,e,T, e,e,S, e,e,T,e,e,T,e,e,T,e,e, S),
\end{equation}
where $S=\sigma_2^6\sigma_1^6\sigma_2^{-6}\sigma_1^{-6}$ and $T=\sigma_1^6\sigma_2^6\sigma_1^{-6}\sigma_2^{-6}$. Note that here only the braids $\beta_{1,i}$ with $i\equiv 2\text{ mod }3$ are non-trivial.

Hence, like in the case of $n=2$, we witness a certain halving of even exponents. Note that both $\rho_{3,1}(S)$ and $\rho_{3,1}(T)$ are non-trivial, so that the action of $\beta$ on $27^2=729$ points is non-trivial. For example $27\times2+2$ is sent to $27\times26+2$.

Recall from Section \ref{sec:actions} that $h(B)=(B_{1,0},B_{1,1},B_{1,2},\ldots,B_{1,n^n-1},\sigma(B))$. Knowing $h(\sigma_1)$ and $h(\sigma_2)$ we can compute $h(B)$ for any braid word $B$ in a number of steps that grows linearly with the length of the braid word. Thus we can also compute $h(B_{j,i})$ in a linear time, which define $\rho_{3,j+1}(B)$. Note that the length of $B_{j,i}$ is at most the length of $B$ for all $j$, $i$. Therefore we can compute each $\rho_{3,j}(B)$, i.e., the action $\psi_{3}(\cdot,B)$ on the $j^{\text{th}}$ coordinate of $\mathbb{Z}_{n^n}$ in linear time with respect to the length of $B$. Note however that the number of lifted braids $B_{j,i}$ for a given $j$ is $(n^n)^j$ and thus grows exponentially with $j$. Therefore in particular actions of braids with a large number of strands $n$ are still quite expensive to compute.

The braids listed in Equations (\ref{eq:aas}) and (\ref{eq:bbs}) are formed by the roots of paths in the space of polynomials corresponding to the lifts of $A=\sigma_1$ and $B=\sigma_2$. Note that $A_{1,i}=A'_{1,i}$ and $B_{1,i}=B'_{1,i}$ as introduced in Section \ref{sec:seq}. We can perform the same computations for the other embedding of $Z_n$ into $V_n$, i.e., the braid associated to a path in $Z_n$ is given by the union of the 0-strand and the critical points of the corresponding polynomials. In the case of $A=\sigma_1$ and $B=\sigma_2$ these are:
\begin{align}
\label{eq:caas}
(\hat{A}_{1,0},\hat{A}_{1,1},\hat{A}_{1,2},\ldots,\hat{A}_{1,26})=(&e,e,e,e,e,e,\sigma_1^{-1},\sigma_1,\sigma_1,\sigma_2\sigma_1,\sigma_1,\sigma_1,\sigma_1,e,\nonumber\\
&\sigma_2,\sigma_1^{-1},e,e,\sigma_2,e,e,\sigma_1,e,e,e,e,\sigma_2)
\end{align}
and
\begin{align}
\label{eq:cbbs}
(\hat{B}_{1,0},\hat{B}_{1,1},\hat{B}_{1,2},\ldots,\hat{B}_{1,26})=(&e,e,e,\sigma_1,\sigma_2,e,e,e,e,e,e,\sigma_2,\sigma_1,\sigma_2\sigma_1,e,\sigma_2,\nonumber\\
&\sigma_2^{-1},e,\sigma_1^{-1},e,e,e,\sigma_1\sigma_2,\sigma_2\sigma_1,\sigma_2,\sigma_2^{-1},e).
\end{align}

We find that the braids that are formed by the critical points are sometimes, but not always the same as those formed by the roots. One difference is that in the case of the roots, the lifted braids $A_{1,j}$ and $B_{1,j}$ had at most the length of the original braid $A$ or $B$, while $\hat{A}_{1,9}$ for example has length 2. 

We can use the embedding of $Z_n$ into $V_n$ where a polynomial is mapped to its set of critical points to construct an action of $\mathbb{B}_n$ on $\mathbb{Z}_n$ completely analogous to the construction of $\psi_n$. By definition the action on $n^n$ points of this action is identical with that defined from $\psi_n$. For bigger values of $j$ however, the actions on $(n^n)^j$ points are different, since the braids in Equations (\ref{eq:caas}) and (\ref{eq:cbbs}) are different from those in (\ref{eq:aas}) and (\ref{eq:bbs}). 
%Comment about halving of exponents, winding number around flagpole...

%\section{Some remarks on invariants and normal subgroups}
%\label{sec:invs}

\section{Dynamics}
\label{sec:dynamics}

The previous sections give rise to two types of maps, whose dynamics might be worth studying. One is produced by the braid group actions on $\mathbb{Z}_n$. The problem of describing this system includes for example questions about the orbits of an $n$-adic integer under the actions defined in Section \ref{sec:actions}. No action of the braid group on the $n$-adic integers can be transitive, since $\mathbb{B}_n$ is countable, while $\mathbb{Z}_n$ is uncountable, but we can prove that the corresponding action on $\mathbb{Z}/(n\times(n^{n-1})^j)\mathbb{Z}$ is transitive for all $j$.

The other problem that seems somewhat related to dynamical systems is concerning the image of a point $v\in V_n$ under repeated application of $\theta_n^{-1}$ and the image of a point $v\in V_n$ under repeated application of $\theta_n$. In this section we give a brief overview of the kind of questions that arise here.

\subsection{Transitivity}
\label{sec:trans}

We begin the discussion of transitivity of the group actions with a result on the topology of the covering spaces $Z_n^j$.
\begin{proposition}
\label{prop:trans}
$Z_n^j$ is path-connected for every $n$ and every $j$.
\end{proposition}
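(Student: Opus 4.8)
The plan is to translate path-connectedness of the covering $Z_n^j$ into transitivity of a monodromy action and to run an induction on $j$, where the inductive step is powered by the elementary fact that deleting a proper complex-analytic subset (real codimension $\geq 2$) from a connected complex manifold neither disconnects it nor kills any class in its fundamental group. Throughout I identify a point of $Z_n$ with its polynomial and write $\rho\colon Z_n\to V_n$ for the embedding sending $f$ to its $n-1$ nonzero roots, and $\Phi\colon Z_n\to C_n$ for the holomorphic self-map $f\mapsto z\prod_i(z-v_i(f))$ built from the critical values $v_i(f)$ of $f$. With this notation $Z_n^{j+1}=\{f\in Z_n:\Phi^i(f)\in Z_n\text{ for }i=1,\dots,j\}$, so the $Z_n^{j+1}\subset Z_n^{j}\subset\cdots\subset Z_n$ form a nested chain of open subsets, and the tower map $Z_n^{j+1}\to Z_n^{j}$ is exactly the restriction of $\Phi$.

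Since $V_n$ is path-connected and $\theta_n\colon Z_n\to V_n$ is a covering by Theorem \ref{thm:cover}, the base case $j=1$ is precisely the assertion that the monodromy of $\pi_1(V_n)=\mathbb{B}_{n-1}^{aff}$ on the degree-$n^{n-1}$ fibre is transitive. I would obtain this from the classical connectedness of the space of degree-$n$ polynomials with simple critical values (equivalently, transitivity of the Hurwitz action on the associated set of trees, which also underlies \cite{critical}), and then note that imposing the extra condition ``distinct roots'' only removes the discriminant locus, a complex hypersurface of real codimension $2$, so the connected space stays connected.

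For the inductive step, assume $Z_n^{j}$ is path-connected. A fibre computation shows that the covering $Z_n^{j+1}\to Z_n^{j}$ is the pullback of $\theta_n\colon Z_n\to V_n$ along $r=\rho|_{Z_n^{j}}\colon Z_n^{j}\to V_n$; hence its total space is path-connected iff the subgroup $r_*\pi_1(Z_n^{j})\subseteq\pi_1(V_n)$ still acts transitively on the $\theta_n$-fibre. Two codimension-$2$ observations close the gap. First, $Z_n^{j}$ is the complement in $Z_n$ of a finite union of proper complex-analytic subsets (the loci where one of the first $j-1$ iterates $\Phi^i(f)$ fails to be generic), so the inclusion gives a surjection $\pi_1(Z_n^{j})\twoheadrightarrow\pi_1(Z_n)$. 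Second, $\rho(Z_n)\subset V_n$ is the open dense set of configurations $(w_1,\dots,w_{n-1})$ for which $z\prod_i(z-w_i)$ has distinct critical values, its complement again a complex hypersurface, so $\rho_*\pi_1(Z_n)=\pi_1(V_n)$. Composing, $r_*\pi_1(Z_n^{j})=\rho_*\pi_1(Z_n)=\pi_1(V_n)$, which acts transitively by the base case, and therefore $Z_n^{j+1}$ is path-connected.

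The \emph{main obstacle} is really the base case: once one knows $\mathbb{B}_{n-1}^{aff}$ acts transitively on the critical-value fibre, i.e.\ that $Z_n$ itself is connected, the rest is soft. The only remaining points needing care are geometric rather than deep, namely checking that each ``bad'' locus (repeated critical values, together with its $\Phi$-preimages) is a genuine proper complex-analytic subset and hence of real codimension at least $2$, so that the two $\pi_1$-surjection statements apply verbatim. I expect no difficulty in the purely topological deductions provided the base-case transitivity is granted from \cite{critical} or from Hurwitz theory.
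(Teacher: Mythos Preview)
Your argument is correct, and the engine is the same as the paper's: proper complex-analytic subsets have real codimension $\geq 2$, so removing them from a connected complex manifold preserves connectedness and induces a $\pi_1$-surjection on the complement. The packaging differs. The paper starts its induction at $V_n=Z_n^0$ and therefore absorbs the connectedness of $Z_n$ into the first inductive step rather than citing it: given $x,y\in Z_n^j$ it takes any path between them in the already-connected $Z_n^{j-1}$, projects to $V_n$ via the covering, deforms the projection off the bad hypersurface by hand (using complex affine lines, which meet it in finitely many points), and lifts the homotopy back. Your pullback/monodromy formulation is cleaner and makes the role of the two $\pi_1$-surjections explicit; the paper's version is more hands-on and entirely self-contained. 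One small remark: your base case does not actually require Hurwitz theory or anything from \cite{critical} beyond the covering statement. The space of monic degree-$n$ polynomials with constant term $0$ and simple critical values is already $\mathbb{C}^{n-1}$ minus a hypersurface, hence connected by the very codimension-$2$ principle you invoke later, and removing the discriminant once more gives $Z_n$. With that observation your proof is just as self-contained as the paper's, and your closing assessment that the base case is the only substantive point is accurate for both versions.
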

\begin{proof}
$V_n$ is clearly path-connected. We still show this explicitly in a way that is instructive for the proof of the path-connectedness of $Z_n^{j}$. Let $x$, $y$ be in $V_n$. Then in particular $x$ and $y$ are unordered $(n-1)$-tuples of non-zero complex numbers $(x_1,x_2,\ldots,x_{n-1})$ and $(y_1,y_2,\ldots,y_{n-1})$ in $\mathbb{C}^{n-1}/S_{n-1}$. Consider a corresponding ordered $(n-1)$-tuple, i.e., $\tilde{x}=(x_1,x_2,\ldots,x_{n-1})$ and $\tilde{y}=(y_1,y_2,\ldots,y_{n-1})$ as points in $\mathbb{C}^{n-1}$. Take a complex affine line $L\subset \mathbb{C}^{n-1}$ containing both $\tilde{x}$ and $\tilde{y}$.

The points in $\mathbb{C}^{n-1}$ that do not correspond to points in $V_n$ are the zeros of a finite number of polynomials, namely $(z_1,z_2,\ldots,z_{n-1})\mapsto z_i$ and $(z_1,z_2,\ldots,z_{n-1})\mapsto z_i-z_j$, $i\neq j$. Since neither $x$ nor $y$ are a zero of any of these functions, none of them are constant zero on $L$. Hence each of the holomorphic functions has a countable number of zeros on $L$ (in fact in this case a finite number) and the complement of the zeros in $L$ is path-connected. Thus there is a path from $x$ to $y$ in $V_n$ and $V_n$ is path-connected.

We proceed by induction on $j$, where we think of $V_n$ as $Z_n^0$. Let $x$, $y\in Z_n^j$ and assume that $Z_n^{j-1}$ is path-connected. Since $Z_n^j$ is a subset of $Z_n^{j-1}$, there is a path $\gamma$ from $x$ to $y$ in $Z_n^{j-1}$. Applying $\theta_n^{j-1}$ to it yields a path $\gamma'$ in $V_n$ with endpoints $\theta_n^{j-1}(x)$ and $\theta_n^{j-1}(y)$. We want to show that $\gamma'$ can be deformed to lie completely in $Z_n$.

The map that sends a monic polynomial with constant term equal to 0 to its (unordered) set of critical points is a homeomorphism $q$. 

We consider ordered $(n-1)$-tuples, $\widetilde{q(\theta_n^{j-1}(x))}$ and $\widetilde{q(\theta_n^{j-1}(y))}$ and the corresponding path $\widetilde{q(\gamma')}$ between them. We say a point in $\mathbb{C}^{n-1}$ corresponds to a point in $Z_n$ (respectively $V_n$) if it lies in $q(Z_n)$ (respectively $q(V_n)$) when considered as an unordered $(n-1)$-tuples. Again the sets of critical points that correspond to points in $V_n$, but not to points in $Z_n$ are the zeros of a finite number of holomorphic functions that are not constant zero. Therefore, the points that correspond to points in $q(Z_n)$ is dense and open in the set of points that correspond to points in $q(V_n)$. It follows that $\widetilde{q(\gamma')}$ can be deformed into a path that consists of finitely many line segments $\ell_i$ such that both endpoints of $\ell_i$ correspond to points in $q(Z_n)$. Now take complex affine lines $L_i\subset\mathbb{C}^{n-1}$. By the same arguments as in the case of $V_n$ each $\ell_i$ can be deformed to lie completely in $q(Z_n)$, i.e. each point on $\ell_i$ corresponds to a point in $q(Z_n)$. This gives a deformation of $\widetilde{q(\gamma')}$ to a path in $q(Z_n)$. This homotopy lifts to a homotopy of $\gamma'$ and ultimately to a homotopy of $\gamma$. Since applying $\theta_n^{j-1}$ to this new path results in a path in $Z_n$, applying $\theta_n^j$ to it results in a path in $V_n$. In other words, the constructed path between $x$ and $y$ lies in $Z_n^j$ and hence $Z_n^j$ is path-connected. 
\end{proof}

It follows from the definition of $\varphi_n$ and $\phi_n$ that they reduce to actions on $(n^{n-1})^j$ points and $n\times(n^{n-1})^j$ points respectively. These are given by the actions on the points in the fibres in $Z_n^j$. Theorem \ref{thm:main} \textit{ii)} is now simply a corollary of Proposition \ref{prop:trans}, since monodromy actions on the fibres in path-connected covering spaces are transitive.

\begin{corollary}
\label{cor:trans}
The action of $\mathbb{B}_n$ on $\mathbb{Z}/(n\times (n^{n-1})^j)\mathbb{Z}$ that comes from  the restriction of $\phi_n$ is transitive for all $n$ and $j$.
\end{corollary}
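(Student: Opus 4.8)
The plan is to deduce the corollary from the elementary fact that the monodromy action of $\pi_1(B)$ on the fibre of a covering $E\to B$ is transitive whenever the total space $E$ is path-connected. The reason is standard: given two points $e_0,e_1$ lying over a common base point $b_0$, path-connectedness of $E$ provides a path $\alpha$ from $e_0$ to $e_1$, and its image under the covering projection is a loop at $b_0$ whose monodromy carries $e_0$ to $e_1$ by uniqueness of path-lifting. It therefore suffices to realise the restriction of $\phi_n$ to $\mathbb{Z}/(n\times(n^{n-1})^j)\mathbb{Z}$ as the monodromy action of $\pi_1(C_n)=\mathbb{B}_n$ on the fibre of a genuine covering of $C_n$ whose total space is path-connected, and then to invoke Proposition \ref{prop:trans}.

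The one point requiring care is that the tower (\ref{eq:tower3}) is not literally a covering of $C_n$, since it passes through the homotopy equivalence $V_n\simeq D_n$ before reaching $C_n$ via $p$. To manufacture an honest covering I would pull the covering $\theta_n^j:Z_n^j\to V_n$ back along the homotopy inverse $h:D_n\to V_n$, obtaining a covering $\widetilde{E}_j=h^{*}Z_n^j\to D_n$ of degree $(n^{n-1})^j$, and then compose with $p:D_n\to C_n$ to get a covering $\widetilde{E}_j\to C_n$ of degree $n\times(n^{n-1})^j$. By construction its monodromy is exactly the lifting procedure defining $\phi_n$ (lift a loop in $C_n$ to $n$ paths in $D_n$, identify these with paths in $V_n$, then lift to $Z_n^j$), so its fibre is the set $\mathbb{Z}/(n\times(n^{n-1})^j)\mathbb{Z}$ and the induced monodromy is precisely the restriction of $\phi_n$. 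Since a covering is a fibration and $h$ is a homotopy equivalence, the projection $\widetilde{E}_j\to Z_n^j$ is itself a homotopy equivalence; hence $\widetilde{E}_j$ is path-connected precisely because $Z_n^j$ is, which is the content of Proposition \ref{prop:trans}.

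I expect the main obstacle to be exactly this bookkeeping around the extra factor of $n$ contributed by $p$. Proposition \ref{prop:trans} directly yields only transitivity of the $\pi_1(V_n)=\mathbb{B}_{n-1}^{aff}$-monodromy on the $(n^{n-1})^j$ points of a fibre of $Z_n^j\to V_n$; it does not by itself guarantee that $\mathbb{B}_n$ mixes the $n$ distinct sheets coming from the choice of flagpole. The real content is therefore to see that these sheets are not separate components of the total space over $C_n$, that is, that $\widetilde{E}_j$ is connected rather than a disjoint union of copies indexed by the sheets of $p$. This is secured by the homotopy invariance of connectivity of coverings under the pullback along $V_n\simeq D_n$ recorded above, after which the path-lifting fact closes the argument with no further computation.
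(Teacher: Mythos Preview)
Your argument is correct and follows the same route as the paper: deduce transitivity from the path-connectedness of $Z_n^j$ (Proposition \ref{prop:trans}) via the general fact that monodromy acts transitively on fibres of connected coverings. The paper's proof is a single sentence invoking this principle, whereas you are more careful in making explicit the honest covering of $C_n$ to which it applies---pulling back $Z_n^j\to V_n$ along $h:D_n\to V_n$ and then composing with $p$---and in checking that the resulting total space inherits path-connectedness from $Z_n^j$; this bookkeeping is glossed over in the paper but is exactly what is needed to account for the extra factor of $n$ from $p$.
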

%\begin{proof}
%It is well-known that the monodromy action on a fibre is transitive if the covering space is path-connected, which proves the statement for $\varphi_n$. Strictly speaking the action $\phi_n$ is not a monodromy action, but it is obviously very closely related as it also maps a point in the covering space to the end point of a lifted path. The only difference is that the paths in the base space are not necessarily loops, but paths starting and ending at a prescribed sets of points. Thus the action permutes points on a fixed set of fibres. The standard proof that path-connectedness of the covering space implies transitivity is completely unchanged by this difference.
%More precisely, for all $j$ the path-connectedness of $Z_n^j$ implies the transitivity of $\varphi_n$ on $\mathbb{Z}/(n^{n-1})^j\mathbb{Z}$ and of $\phi_n$ on $\mathbb{Z}/n\mathbb{Z}\times\mathbb{Z}/(n^{n-1})^j\mathbb{Z}$. The isomorphism between $\mathbb{Z}_{n^{n-1}}$ and $\mathbb{Z}_n$ (and between $\mathbb{Z}/n\mathbb{Z}\times\mathbb{Z}_n$ and $\mathbb{Z}_n$) then establishes the transitivity on $\mathbb{Z}/n^j\mathbb{Z}$ for all $j$.
%\end{proof}

Note that the transitivity on each level does not contradict the lack of transitivity on their inverse limit $\mathbb{Z}/n\mathbb{Z}\times\mathbb{Z}_{n^{n-1}}\cong\mathbb{Z}_n$, since in general the orbit map does not commute with the inverse limit. Some conditions when they do commute are given in \cite{singh}, but they are not satisfied here.

With exactly the same methods as in Proposition \ref{prop:trans} and Corollary \ref{cor:trans} we can prove the analogous statements for the spaces $\hat{Z}_n^j$, defined from the embedding of $Z_n$ into $V_n$ that sends a polynomial to its critical points.

As we have seen in Section \ref{sec:comps} Corollary \ref{cor:trans} is not true for the corresponding actions on $\mathbb{Z}/(n^n)^j\mathbb{Z}$ that are restrictions of $\varphi_n$. It is not even true for $n=2$, $j=2$.

\subsection{Preimage and image sets}
\label{sec:prepost}

We study the image of a given point in $V_n$ under repeated application of $\theta_n^{-1}$. This is a sequence of unordered $n-1$-tuples of non-zero complex numbers (i.e., points in $V_n$), indexed by the $n$-adic integers. 

Let $\mathcal{Z}_x^j$ be the set of the complex numbers $z$ such that there exist distinct non-zero complex numbers $z_i\neq z$, $i=1,2,\ldots,n-2$ such that $j$ repeated applications of $\theta_n$ to $(z_1,z_2,\ldots,z_{n-2},z)$ result in $x\in V_n$. Let $\mathcal{Z}_x=\bigcup_j\mathcal{Z}_x^j$.
\begin{proposition}
The set $\{\arg(z):z\in\mathcal{Z}_x\}$ is dense in $S^1$ for all $x\in V_n$.
\end{proposition}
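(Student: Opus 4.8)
The engine of the argument is a scaling equivariance of the covering map $\theta_n$. For a polynomial $g\in Z_n$ and $\lambda\in\mathbb{C}\setminus\{0\}$, set $g_\lambda(z)=\lambda^{-n}g(\lambda z)$. A direct check shows that $g_\lambda$ is again monic of degree $n$ with vanishing constant term, that its critical points are $\lambda^{-1}$ times those of $g$, and hence that its set of critical values is $\lambda^{-n}$ times that of $g$ while its set of roots is $\lambda^{-1}$ times that of $g$; in particular the scaling preserves distinctness and non-vanishing, so $g_\lambda\in Z_n$. Taking $\lambda^{n}=1$ recovers the $\rme^{2\pi\rmi/n}$-symmetry of the fibres noted above, and in general $\theta_n$ intertwines scaling a point of $V_n$ by $\mu$ with scaling the roots of its preimages by $\mu^{1/n}$. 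This mismatch between the factor $\mu$ downstairs and $\mu^{1/n}$ upstairs is precisely what will generate density.

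The plan is to exploit this equivariance along an entire chain of lifts. Fix $x\in V_n$ and an element $z_0\in\mathcal{Z}_x^j$, and realise it by a chain of polynomials $f^{(1)},\ldots,f^{(j)}\in Z_n$ with the critical values of $f^{(1)}$ equal to $x$, the critical values of $f^{(k)}$ equal to the non-zero roots of $f^{(k-1)}$ for $2\le k\le j$, and with $z_0$ occurring among the non-zero roots of $f^{(j)}$. I would then scale each link of the chain, replacing $f^{(k)}$ by $f^{(k)}_{\lambda_k}$. Compatibility of the scaled chain (so that it still descends to $x$) forces $\lambda_1^{-n}=1$ and $\lambda_k^{-n}=\lambda_{k-1}^{-1}$ for $2\le k\le j$, and the top coordinate $z_0$ is thereby multiplied by $\lambda_j^{-1}$. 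Solving these relations, $\lambda_j$ is pinned down only up to the $n$-th-root ambiguity introduced at each of the $j$ stages: one can write $\lambda_j^{-1}=\rme^{-2\pi\rmi(a_1+a_2 n+\cdots+a_j n^{j-1})/n^{j}}$ with $a_k\in\mathbb{Z}$, and since the base-$n$ expression $a_1+a_2 n+\cdots+a_j n^{j-1}$ realises every integer, $\lambda_j^{-1}$ ranges over all $n^j$-th roots of unity. Hence $\mathcal{Z}_x^j$ is invariant under multiplication by $\rme^{2\pi\rmi/n^j}$.

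To conclude, note that $\theta_n$ is a covering map by Theorem~\ref{thm:cover}, so every $x$ admits preimages at each depth and $\mathcal{Z}_x^j\neq\varnothing$ (this also follows from the path-connectedness of $Z_n^j$ in Proposition~\ref{prop:trans}). Choosing any $z_0\in\mathcal{Z}_x^j$, the previous paragraph places all of $\rme^{2\pi\rmi k/n^{j}}z_0$, $k=0,1,\ldots,n^{j}-1$, in $\mathcal{Z}_x^j$, so $\{\arg z:z\in\mathcal{Z}_x^j\}$ contains $n^{j}$ equally spaced angles and is therefore $(2\pi/n^{j})$-dense in $S^1$. As $\mathcal{Z}_x\supseteq\mathcal{Z}_x^j$ for every $j$ and $2\pi/n^{j}\to 0$, the set $\{\arg z:z\in\mathcal{Z}_x\}$ is dense. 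The main obstacle is the bookkeeping in the middle step: one must verify that each scaled $f^{(k)}_{\lambda_k}$ stays in $Z_n$ and, more importantly, that the compounded $n$-th-root choices at successive levels surject onto the full group of $n^j$-th roots of unity. The $\rme^{2\pi\rmi/n}$-symmetry alone only yields the fixed spacing $2\pi/n$; it is essential that the $\mu\mapsto\mu^{1/n}$ scaling, iterated down $j$ levels, refines this to spacing $2\pi/n^{j}$.
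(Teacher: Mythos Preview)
Your argument is correct and follows essentially the same route as the paper: the paper also argues by induction on $j$ that $\mathcal{Z}_x^j$ is invariant under rotation by $2\pi/n^j$, using the $\rme^{2\pi\rmi/n}$-symmetry of the fibres of $\theta_n$ combined with the fact that scaling the roots by $\mu$ scales the critical values by $\mu^n$. Your write-up is more explicit about the scaling equivariance $g_\lambda(z)=\lambda^{-n}g(\lambda z)$ and the bookkeeping of the compounded $n$-th roots across the chain of lifts, but the underlying mechanism is identical.
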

\begin{proof}
We have already mentioned a certain symmetry among the preimage points under $\theta_n$, namely if $(z_1,z_2,\ldots,z_{n-1})\in Z_n$ maps to $x\in V_n$, then $\xi^k(z_1,z_2,\ldots,z_{n-1})$ also maps to $x$ for all $k=0,1,2,\ldots,n-2$, where $\xi=\rme^{2\pi \rmi/n}$ is a $n^{\text{th}}$ root of unity and multiplication is defined componentwise. It follows by induction on $j$ that the set of complex numbers that are components of the preimage set of $j$ applications of $\theta_n$ of a point $x\in V_n$ is symmetric under rotation by $\tfrac{2\pi}{n^j}$. The proposition follows.
\end{proof}
 %Then it follows that the arguments of numbers $z\in \mathcal{Z}_x$ is dense in $S^1$ for all $x\in V_n$.

What can we say about the modulus of the elements of $\mathcal{Z}_x$? Again the case of $n=2$ is simple enough to be done by hand. Recall that for $n=2$ we have $Z_n^j=V_n=\mathbb{C}\backslash\{0\}$. The modulus of the non-zero root $z$ of a monic polynomial $u(u-z)$ is $2\sqrt{|x|}$, where $x$ is the critical value of $u(u-z)$. Therefore the moduli of elements in $\mathcal{Z}_x^j$ all converge to $4$ as $j$ goes to infinity for all $x\in V_n=\mathbb{C}\backslash\{0\}$. Figure \ref{fig:n2j10} shows the distribution of elements of $\bigcup_{j=1}^k\mathcal{Z}_x^j$ for $x=1$ and $k=4,7,10$.

\begin{figure}
\centering
%\labellist
%\large
%\pinlabel $v_1$ at 135 100
%\pinlabel $v_2$ at 220 70
%\pinlabel Re at 270 40
%\pinlabel Im at 120 160
%\endlabellist
\includegraphics[height=3.5cm]{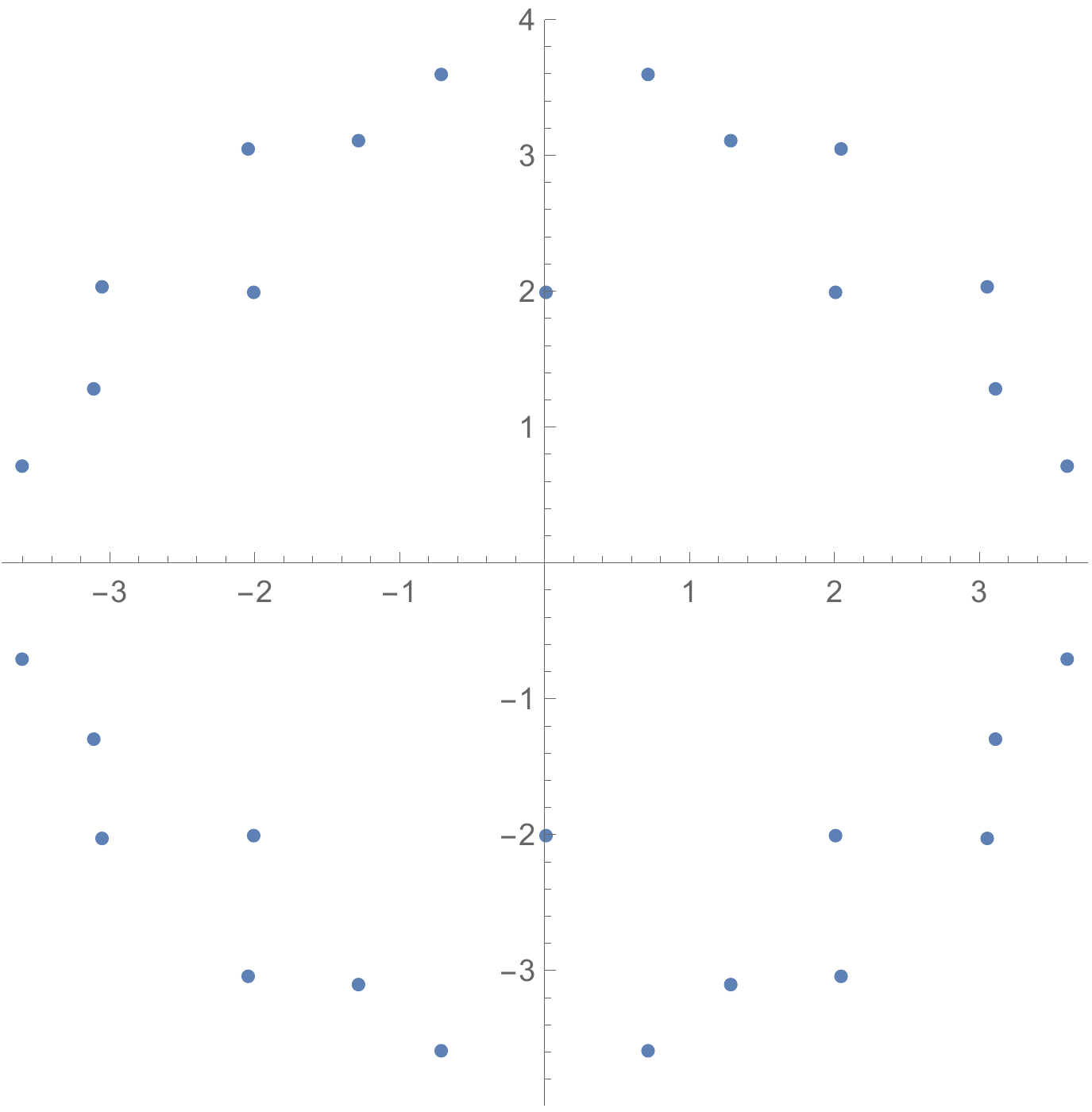}\quad
\includegraphics[height=3.5cm]{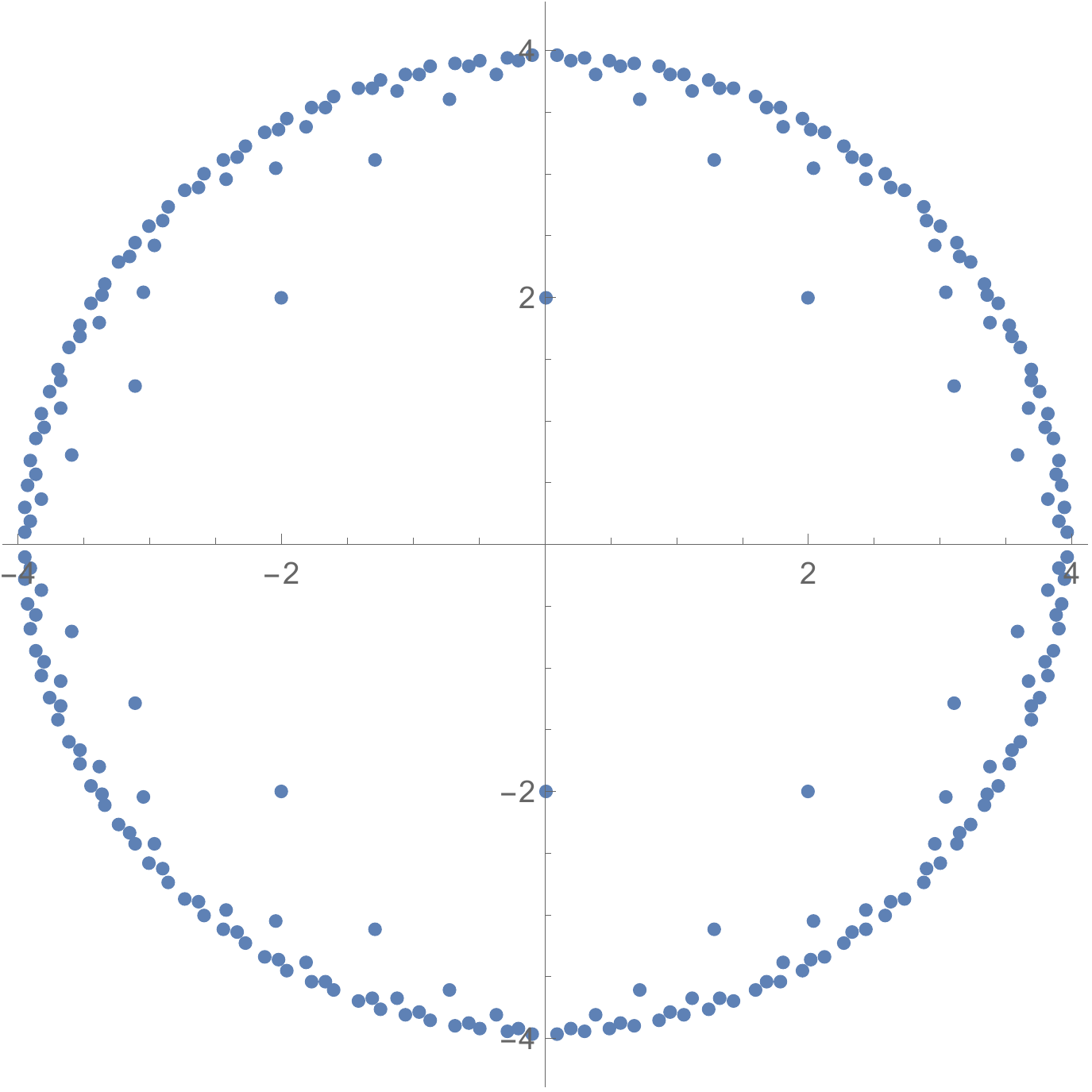}
\includegraphics[height=3.5cm]{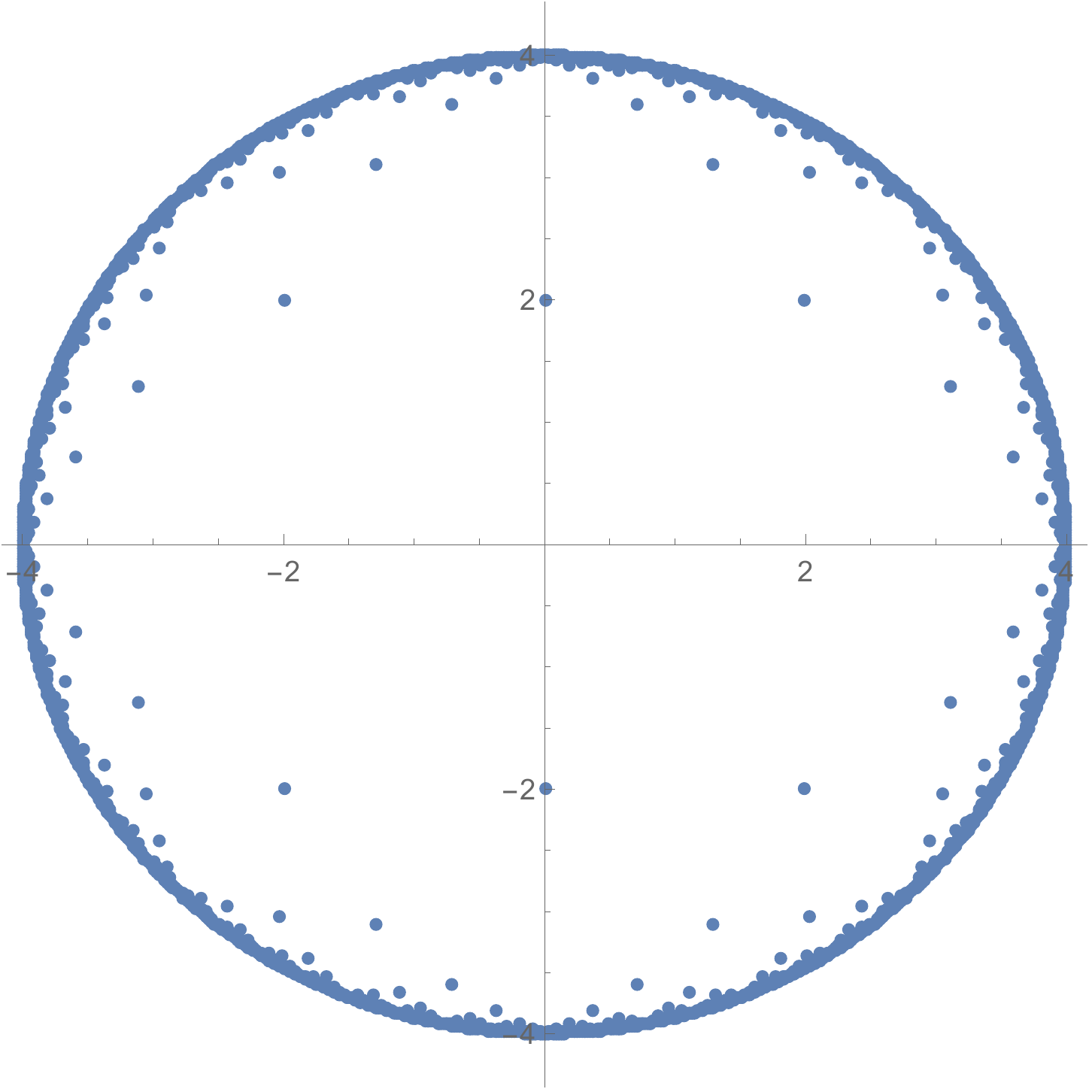}
\caption{\label{fig:n2j10}The sets $\bigcup_{j=1}^k\mathcal{Z}_{1}^j$ for $k=4$, $k=7$ and $k=10$.}
\end{figure}

For the case of $n=3$ we plot $\bigcup_{j=1}^k\mathcal{Z}_{(\rme^{\rmi \pi/3}/2,-1)}^j$ for $k=3,4$ in Figure \ref{fig:n3j5}. The base point $(\rme^{\rmi \pi/3}/2,-1)$ is chosen arbitrarily. We can see that the moduli of points in $\bigcup_{j=1}^k\mathcal{Z}_{( \rme^{\rmi \pi/3}/2,-1)}^j$ seem to lie in a finite interval.

\begin{figure}
\centering
%\labellist
%\large
%\pinlabel $v_1$ at 135 100
%\pinlabel $v_2$ at 220 70
%\pinlabel Re at 270 40
%\pinlabel Im at 120 160
%\endlabellist
\includegraphics[height=3.5cm]{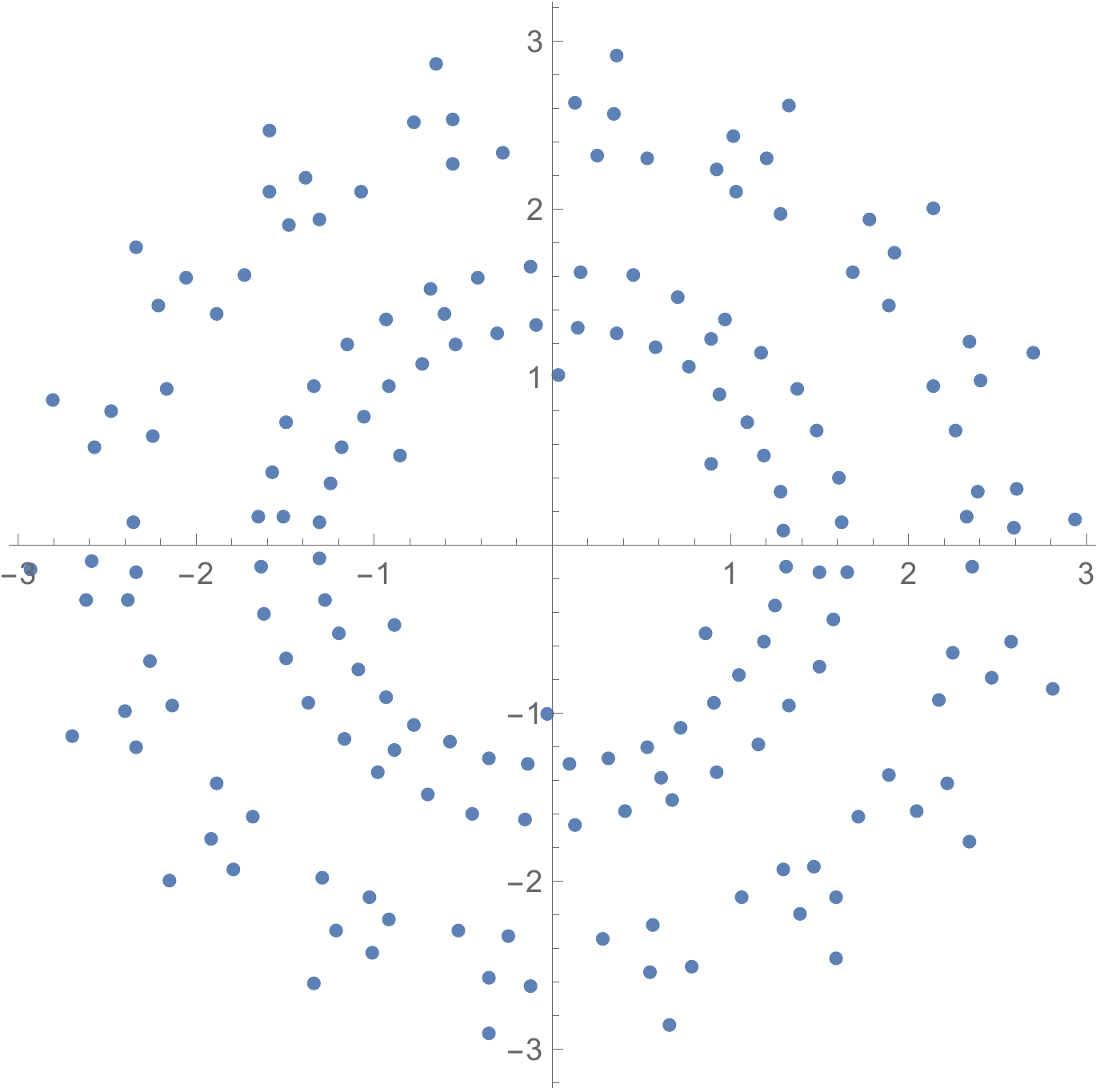}\quad
\includegraphics[height=3.5cm]{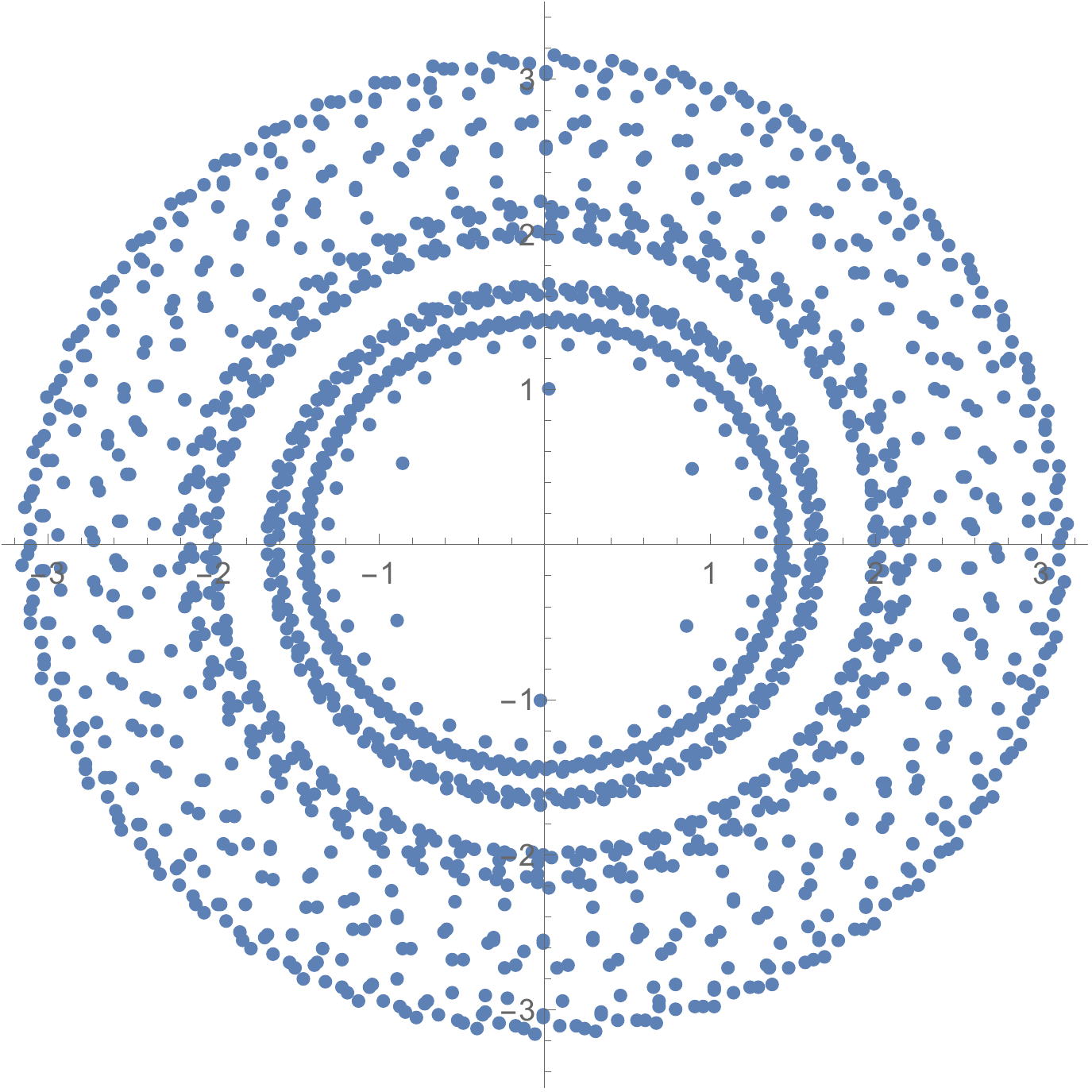}
\caption{\label{fig:n3j5}The sets $\bigcup_{j=1}^k\mathcal{Z}_{(\rme^{\rmi \pi/3}/2,-1)}^j$ for $k=3$ and $k=4$.}
\end{figure}

The sets $\mathcal{Z}_x$ offer insight in the complex numbers that arise under repeated application of $\theta_n^{-1}$ to $x$. We could just as well repeatedly apply $\theta_n$ to $x$. There are different cases to consider. If $x\in Z_n^j$, then the first $j$ applications of $\theta_n$ will result in points that are in $V_n$. Therefore, if $x\in\bigcap_j Z_n^j$, then no matter how many times we apply $\theta_n$, we always end up in $V_n$. If we take for example any tuple that consists of one positive real number $x_1$ and one negative real number $x_2$. Then the corresponding polynomial $u\prod_{i=1}^2(u-x_i)$ has again one maximum with positive real value and one minimum with negative real value. Hence, no matter how many times we apply $\theta_3$, we always obtain a set of critical values that are non-zero and disjoint and thus in $V_3$.

In contrast, if $x\in Z_n^j\backslash Z_n^{j+1}$, then the $j+1^{\text{th}}$ application of $\theta_n$ results in a point $y$ that is not in $V_n$. This means it gives an unordered $(n-1)$-tuple of non-zero complex numbers, where at least two of them are equal. We do not have to stop here though. The map $\theta_n$ is still well-defined as the map that sends a polynomial $u\prod_{i=1}^{n-1}(u-z_i)$ (or equivalently, an unordered $(n-1)$-tuple $(z_1,z_2,\ldots,z_{n-1})$) to its set of critical values. It is just not a covering map anymore. Since at least two components of $y$ are equal, the corresponding polynomial has a double root. Hence $\theta_n(y)$ is an unordered $(n-1)$-tuple of complex number with at least one of them equal to 0. The polynomial corresponding to this tuple now has a double root at $0$, so its image under $\theta_n$ also contains 0. Therefore, from now on every additional application of $\theta_n$ will result in a tuple that contains 0.

We can study the sequence $a_j\in\mathbb{N}$ of number of zeros in $\theta_n^j(x)$. Note that it is monotone increasing and obviously bounded above by $n-1$ for every $x\in\mathbb{C}^{n-1}$. Therefore, it must be constant after a while and there is a well-defined limit value $a=\lim_{j\to\infty} a_j$. In fact, the polynomial $p\equiv 0$ is the only polynomial with $(0,0,\ldots,0)$ as its set of critical values. Therefore, the sequence is actually bounded by $n-2$.

\begin{remark}
For points like in the example above ($n=3$, one positive, one negative entry), the sequence $a_j$ is constant 0, but for all points that are not in $\bigcap_j Z_n^j$ the limit is at least 1. In the case of $n=3$ this implies that the limit is 0 for $\bigcap_j Z_3^j$ and 1 in all other cases.
\end{remark}
Again, the discussion in this section is focussed on the relation between the roots and the critical values of polynomials. We could easily proceed analogously to study the relation between critical points and critical values. We take a $(n-1)$-tuple of complex numbers $c=(c_1,c_2,\ldots,c_{n-1})$ and identify it with the polynomial $f(u)=\int_0^u \prod_{i=1}^{n-1}(w-c_i)\rmd w$. We then send $c$ to the set of critical values of $f$. If $f$ is in $Z_n$, then the image of this map is in $V_n$. In any case we obtain a new $(n-1)$-tuple, which we can use to iterate the process. In this case, it is not the number of zeros that is increasing, but the number of pairs of identical entries. 

\section{Normal subgroups}
\label{sec:normal}

The actions $\phi_n$ and $\psi_n$ on $\mathbb{Z}_n$ each provide us with a sequence of homomorphisms $\mathbb{B}_n\to S_{n\times(n^{n-1})^j}$ and $\mathbb{B}_n\to S_{(n^n)^j}$, respectively. These correspond to the restricted actions on $n\times(n^{n-1})^j$ and $(n^n)^j$ points respectively. Therefore, we have two descending series of normal subgroups $N_j$ and $H_j$ of $\mathbb{B}_n$ corresponding to the kernels of these homomorphisms. The fact that they are descending sequences, i.e., $N_{j+1}\subseteq N_j$ and $H_{j+1}\subseteq H_j$ for all $j\in\mathbb{N}$ follows from the compatibility of the homomorphisms for different $j$, which follows from the definition of the actions.

In the case of both $\phi_n$ and $\psi_n$ the action on $n^n$ points (i.e., $j=1$) is compatible with the permutation representation $\pi:\mathbb{B}_n\to S_n$. This means that the normal subgroups $N_j$ and $H_j$ are in fact normal subgroups of the pure braid group.
    
%The existence of these descending series of normal subgroups raises the question whether they stabilize. In other words, is there a $j$ such that $N_j=N_{k}$ for all $k\geq j$ and similarly for $M_j$ and $H_j$. We say a group $G$ is \textit{Artinian with respect to normal subgroups} if every descending series of normal subgroups $G=G_0\trianglerighteq G_1\trianglerighteq G_2\trianglerighteq\ldots$ stabilizes.

%In this section we show that the constructed descending sequences of normal subgroups do not stabilize. The braid group $\mathbb{B}_n$ is therefore not Artinian with respect to normal subgroups.  
    
With the exception of $\psi_2$ (cf. Section \ref{sec:comps}) it is not known if the presented actions $\phi_n$ and $\psi_n$ are faithful. This is the case if and only if the intersections of the normal subgroups $\bigcap_{j=1}^{\infty}N_j$ and $\bigcap_{j=1}^{\infty}H_j$, respectively, are the trivial braid $\{e\}$. Here we show that the descending sequences do not stabilise, i.e., there is no $M\in\mathbb{N}$ such that $N_j=N_M$ (or $H_j=H_M$) for all $j\geq M$.
%In the case of $\psi_2$ the calculations in Section \ref{sec:comps} show us not only that $\bigcap_{j=1}^{\infty}H_j=\{e\}$, but also that $\sigma_1^{2^{j+1}}\in H_j$. Thus for $n=2$ the descending sequence $H_j$ does not stabilize.

\begin{lemma}
\label{stabilize}
The descending sequence $N_j$ of $\mathbb{B}_n$ does not stabilise for any $n$.
\end{lemma}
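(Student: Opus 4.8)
The plan is to prove something a little stronger than non-stabilisation: I will exhibit, for every $j$, an explicit braid lying in $N_j$ but not in $N_{j+1}$, so that the chain strictly decreases at \emph{every} step. The natural candidate is a power of the full twist $\Delta^2=(\sigma_1\sigma_2\cdots\sigma_{n-1})^n$, the generator of the centre of $\mathbb{B}_n$, realised geometrically as the loop in $C_n$ that rotates all $n$ roots once around the origin. Since $\Delta^{2m}$ is a pure braid, it acts trivially on the first factor $\mathbb{Z}/n\mathbb{Z}$ (the permutation representation $\pi$), so the only thing to analyse is its monodromy on the fibres of the tower $Z_n^j\to V_n$.

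The key computation is that this monodromy is a rigid rotation whose angle is divided by $n$ at each level of the tower. Under the homotopy equivalence $V_n\simeq D_n$ the (lift to $D_n$ of the) loop $\Delta^{2m}$ corresponds to the loop $\gamma^m$ that rotates a configuration of critical values by $m$ full turns, $t\mapsto \rme^{2\pi\rmi m t}x$. For a monic polynomial $f$ with $f(0)=0$ the rescaling $f_s(z)=s^{-n}f(sz)$ multiplies the set of critical values by $s^{-n}$ while multiplying the set of nonzero roots by $s^{-1}$; equivalently $\theta_n(f_s)=s^{-n}\theta_n(f)$. Bookkeeping of how this compounds through the $j$ stages of the tower (the level-$(j-i)$ configuration scales by $s^{-n^{i}}$, so the base configuration $\theta_n^{j}(f)$ scales by $s^{-n^{j}}$) shows that the lift of $\gamma^m$ starting at a point $f\in Z_n^j$ is exactly the rescaling path $f_{s(t)}$ with $s(t)=\rme^{-2\pi\rmi m t/n^{j}}$. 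Hence the monodromy on the level-$j$ fibre sends $f$ to the polynomial obtained by rotating its roots by $\rme^{2\pi\rmi m/n^{j}}$.

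It remains to decide when this rotation is the identity permutation of the fibre. A point is fixed precisely when its root set $\{0,z_1,\ldots,z_{n-1}\}$ is invariant under rotation by a root of unity of order $d=n^{j}/\gcd(m,n^{j})$. Because $0$ is always a root and the rotation is fixed-point-free on the nonzero roots, such invariance forces the $n-1$ nonzero roots to split into orbits of size $d$, so $d\mid(n-1)$; combined with $d\mid n^{j}$ and $\gcd(n^{j},n-1)=1$ this gives $d=1$, i.e. $n^{j}\mid m$. Therefore the monodromy of $\Delta^{2m}$ on level $j$ is trivial if $n^{j}\mid m$ and is fixed-point-free (in particular non-trivial) otherwise. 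Since $\Delta^{2m}$ also acts trivially on the $\mathbb{Z}/n\mathbb{Z}$ factor, we conclude $\Delta^{2m}\in N_j$ if and only if $n^{i}\mid m$ for all $i\le j$, that is, if and only if $n^{j}\mid m$. Taking $m=n^{j}$ yields $\Delta^{2n^{j}}\in N_j\setminus N_{j+1}$, so $N_{j+1}\subsetneq N_j$ for every $j$ and the sequence does not stabilise for any $n$.

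\textbf{Main obstacle.} The step requiring the most care is the middle paragraph: verifying that the lift of the rotation loop through the whole chain $Z_n^j\to\cdots\to Z_n\to V_n\simeq D_n\xrightarrow{p}C_n$ really is the rigid rescaling path, with the angle contracting by the factor $n$ at each application of $\theta_n$. Concretely this means checking the self-similarity identity $\theta_n(f_s)=s^{-n}\theta_n(f)$, tracking how the scaling parameter compounds up the tower, and confirming that neither the homotopy equivalence $V_n\simeq D_n$ nor the degree-$n$ cover $p$ disturbs the picture for the pure braid $\Delta^{2m}$. Once the monodromy is pinned down as a rotation, the freeness input is elementary, resting only on $\gcd(n,n-1)=1$.
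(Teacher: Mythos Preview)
Your argument is correct and gives a genuinely different proof from the one in the paper. The paper argues indirectly: by Proposition~\ref{prop:trans} each $Z_n^j$ is path-connected, so the monodromy action of $\mathbb{B}_n$ on the fibre of $n\times(n^{n-1})^j$ points is transitive, hence the image of $\mathbb{B}_n$ in $S_{n\times(n^{n-1})^j}$ has at least $n\times(n^{n-1})^j$ elements, and therefore $[\mathbb{B}_n:N_j]\to\infty$. No explicit braid is produced; the conclusion is purely an index bound.

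Your route is constructive and independent of Proposition~\ref{prop:trans}: you exploit the global $\mathbb{C}^*$-symmetry $f\mapsto f_s$, $f_s(z)=s^{-n}f(sz)$, which satisfies $\theta_n(f_s)=s^{-n}\theta_n(f)$ and propagates through the tower as $\theta_n^{\,j}(f_s)=s^{-n^{j}}\theta_n^{\,j}(f)$. This pins down the lift of the rotation loop explicitly and identifies the level-$j$ monodromy of $\Delta^{2m}$ as rotation of the root set by $\rme^{2\pi\rmi m/n^{j}}$. The coprimality $\gcd(n,n-1)=1$ then shows this rotation is fixed-point-free unless $n^{j}\mid m$, yielding the concrete witness $\Delta^{2n^{j}}\in N_j\setminus N_{j+1}$. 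This is stronger than the paper's statement in two respects: it gives strict descent at every step (not merely non-stabilisation), and it shows that already the centre $\langle\Delta^2\rangle\cong\mathbb{Z}$ of $\mathbb{B}_n$ separates all the $N_j$. The paper's proof, on the other hand, is essentially a one-liner once transitivity is available and simultaneously feeds into Theorem~\ref{thm:main}\,\textit{ii)}; your approach bypasses transitivity entirely but does not recover it.
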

\begin{proof}
Since $Z_n^j$ is path-connected for all $n$ and $j$ (cf. Proposition \ref{prop:trans}), the image of $\mathbb{B}_n$ in $S_{n\times(n^{n-1})^j}$ consists of at least $n\times(n^{n-1})^j$ permutations, which is a lower bound for the index of $N_j$. Since every $N_j$ has finite index, this shows that the sequence cannot stabilise.
\end{proof}

%\begin{corollary}
%\label{cor:artin}
%For every $n$ the braid group $\mathbb{B}_n$ and the affine braid group $\mathbb{B}_{n-1}^{aff}$ are not Artinian with respect to normal subgroups.
%\end{corollary}

%Corollary \ref{cor:artin} is already known from another example of a non-stabilizing descending series of normal subgroups, namely the lower central series of the pure braid groups \cite{freeprods}.

In order to prove that the normal subgroups $H_j$ do not stabilise either, we need several lemmas.

\begin{lemma}
\label{pathequal}
Let $x\in Z_n$ with 0 in $i^{\text{th}}$ position and $y\in Z_n$ with 0 in $j^{\text{th}}$ position. Let $B\in\mathbb{B}_n$ be a braid with $\pi_B(i)=j$. Then there is a path from $x$ to $y$ in $Z_n$ that corresponds to a parametrisation of $B$.  
\end{lemma}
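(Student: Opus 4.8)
The plan is to build the path in two stages. Write $W_n$ for the space of monic degree-$n$ polynomials with constant term $0$ and $n$ distinct roots; sending such a polynomial to its root set identifies $W_n$ with the configurations of $n$ points in $\mathbb{C}$ one of which is $0$. Both $x$ and $y$ lie in $W_n$, and $Z_n$ is the open subset of $W_n$ on which the $n-1$ critical values are in addition pairwise distinct. First I would produce a path in $W_n$ from $x$ to $y$ whose roots trace out a geometric braid of type $B$ with the root $0$ kept constant (the flagpole); then I would push this path off the locus $W_n\setminus Z_n$ so that it lands inside $Z_n$ without changing the braid type.

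For the first stage, pick any geometric braid $\beta\subset\mathbb{C}\times[0,1]$ representing $B$ whose bottom configuration is the root set of $x$ and whose top configuration is the root set of $y$. Number the strands by the real parts of their endpoints, so that the hypotheses ``$0$ in $i^{\text{th}}$ position'' and ``$0$ in $j^{\text{th}}$ position'' say that the root $0$ of $x$ occupies slot $i$ at the bottom and the root $0$ of $y$ occupies slot $j$ at the top. Because $\pi_B(i)=j$, one and the same strand $\alpha(t)=(a(t),t)$ of $\beta$ runs from slot $i$ to slot $j$, so $a(0)=0$ and $a(1)=0$. Applying the fibre-preserving ambient isotopy $\Phi_s(z,t)=(z-s\,a(t),t)$, $s\in[0,1]$, which acts as a translation on each time-slice, carries $\alpha$ to the flagpole $(0,t)$ at $s=1$ while keeping the $n$ points of every slice distinct; hence the strands stay disjoint and transverse to the fibres and $\Phi_s(\beta)$ is a braid for each $s$. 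Since $a(0)=a(1)=0$, the isotopy fixes the bottom and top configurations, so $\Phi_1(\beta)$ still runs from the roots of $x$ to the roots of $y$ and still represents $B$, but now the strand through $0$ is constant. Reading off the $n$-point configuration at each time gives a path $\gamma_0$ in $W_n$ from $x$ to $y$ whose associated braid is $B$.

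For the second stage I would deform $\gamma_0$ into $Z_n$. Note first that every polynomial in $W_n$ automatically has nonzero critical values, since a vanishing critical value would force a critical point to be a root, i.e.\ a multiple root, contradicting distinctness; thus $W_n\setminus Z_n$ is exactly the locus where two critical values coincide. This is cut out by the vanishing of a discriminant that detects when two of the critical values agree, an analytic function on $W_n$ that is not identically zero because $Z_n$ is nonempty by Theorem \ref{thm:cover}; hence $W_n\setminus Z_n$ is a proper closed complex-analytic subset of the complex manifold $W_n$ and has real codimension at least $2$. A generic path is disjoint from a subset of real codimension at least $2$, and the endpoints $x,y$ already lie in the open set $Z_n$, so $\gamma_0$ can be perturbed rel endpoints, and within $W_n$, to a $C^0$-close path $\gamma$ avoiding $W_n\setminus Z_n$; this is the same transversality argument used in the proof of Proposition \ref{prop:trans}. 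Staying inside $W_n$ keeps the flagpole at $0$, and being $C^0$-close and rel endpoints makes $\gamma$ homotopic to $\gamma_0$ in $W_n\subset C_n$, so $\gamma$ still realises $B$. Then $\gamma$ is a path from $x$ to $y$ in $Z_n$ that parametrises $B$. The main obstacle is making this last transversality step precise — producing the perturbation inside $W_n$ while simultaneously keeping the braid type equal to $B$ — and this is exactly where the codimension-$2$ estimate for $W_n\setminus Z_n$ does the work.
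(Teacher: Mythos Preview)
Your proof is correct and follows essentially the same two-stage strategy as the paper: first produce a path from $x$ to $y$ in the ambient configuration space (your $W_n$, which is exactly $V_n$ under the identification of a polynomial with its non-zero roots) that parametrises $B$, then push it into $Z_n$ by the codimension-$2$ avoidance argument from Proposition~\ref{prop:trans}. Your translation isotopy $\Phi_s(z,t)=(z-s\,a(t),t)$ making the distinguished strand into the flagpole is a cleaner, more explicit realisation of the paper's one-line assertion that such a path exists whenever $\pi_B(i)=j$; the paper simply takes this for granted and invokes Proposition~\ref{prop:trans} directly.
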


\begin{proof}
This proof is a variation of the proof of Proposition \ref{prop:trans}. Let $x=(x_1,x_2,\ldots,x_{n-1})$, $y=(y_1,y_2,\ldots,y_{n-1})\in Z_n$. If $\pi_B(i)=j$, there is a path $\gamma'(t)=(\gamma_1'(t),\gamma_2'(t),\ldots,\gamma_{n-1}'(t))$, $t\in[0,1]$ from $x$ to $y$ in $\mathbb{C}^{n-1}/S_{n-1}$ that is a parametrisation of $B$.
Recall that in the proof of Proposition \ref{prop:trans} we showed the path-connectedness of $Z_n$ by showing that a path $\gamma$  in $Z_n^0=V_n$ from $x$ to $y$ can be deformed to a path in $Z_n$. In $V_n$ the path $\gamma$ can be taking to be any braid $B$ as long as it satisfies $\pi_B(i)=j$ and the deformation does not change the braid type. The lemma follows.   
%he (arbitrarily ordered) sets of critical points of the polynomials $u\prod_{i=1}^n(u-x_i)$ and $u\prod_{i=1}^n(u-y_i)$. We can choose this path $\gamma$ to be the sets of critical points of the polynomials $u\prod_{i=1}^{n-1}(u-\gamma_i'(t))$. The proof of Proposition \ref{prop:trans} then proceeds by showing that $\gamma$ can be deformed to a path that lifts to a path from $x$ to $y$ in $Z_n$. Since this deformation also lifts to a homotopy of the original path $\gamma'$, the resulting path is still a parametrisation of $B$.
\end{proof}

\begin{lemma}
\label{apple}
Let $n>2$. There is a base point $v\in V_n$ that has 0 in the $\lfloor\frac{n+1}{2}\rfloor^{\text{th}}$ position and that has two preimage points $x,y\in Z_n$ that have 0 in the $\lfloor\frac{n+1}{2}\rfloor^{\text{th}}$ position. 
\end{lemma}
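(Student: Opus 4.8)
The plan is to exploit the scaling symmetry of the preimage sets of $\theta_n$ noted earlier: rescaling all roots of a polynomial in $Z_n$ by $\lambda$ rescales its critical values by $\lambda^n$, so multiplying the roots by an $n$-th root of unity $\xi^k$, $\xi=\rme^{2\pi\rmi/n}$, fixes the set of critical values. Hence from a single preimage of a point $v\in V_n$ we obtain $n$ preimages, namely the rotations of the root configuration by the angles $2\pi k/n$, $k=0,1,\ldots,n-1$. I would start from a monic real polynomial $f$ with $n$ distinct real roots, one of them $0$, chosen so that $0$ is the $\lfloor\tfrac{n+1}{2}\rfloor$-th smallest root; generically its critical values are distinct and nonzero, so $f\in Z_n$. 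Writing $m=\lfloor\tfrac{n+1}{2}\rfloor$, the nonzero roots split into $m-1$ with negative real part and $n-m$ with positive real part, so $f$, viewed through its nonzero roots, has $0$ in the $m$-th position.

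The key computation is to track the position of the rotated preimages. If we rotate the whole root configuration by an angle $\theta$, a real root $r_j$ moves to $r_j\rme^{\rmi\theta}$, whose real part is $r_j\cos\theta$. Thus when $\cos\theta>0$ the negative nonzero roots are exactly the originally negative ones, giving $0$ in position $m$, whereas when $\cos\theta<0$ the roles reverse and $0$ sits in position $n+1-m=\lceil\tfrac{n+1}{2}\rceil$. For $n$ odd these two values coincide, while for $n$ even they are $m=n/2$ and $m+1=n/2+1$, so one must insist on $\cos\theta>0$.

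For odd $n$ I would take $\theta=2\pi k/n$ with no extra rotation. An all-real-root polynomial of odd degree has exactly $(n-1)/2$ negative critical values, since the critical values alternate in sign between consecutive roots; hence $v=\theta_n(f)$ already has $0$ in position $(n-1)/2+1=m$. Since $\cos(2\pi k/n)\neq0$ for every $k$ when $n$ is odd, all $n\ge3$ rotations $\xi^k f$ land in position $m$, and any two of them, distinct for a generic choice of roots, serve as $x$ and $y$.

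The even case is the main obstacle, for two reasons: the all-real-root polynomial puts $v$ in position $n/2+1$ rather than the required $n/2$, and rotation by $\xi^k$ can throw a real root onto the imaginary axis, where the position is undefined. I would cure both by first rotating the entire configuration by a generic angle $\alpha$, replacing $f$ by $f_\alpha$ (roots $r_j\rme^{\rmi\alpha}$) and the base point by $v_\alpha=\rme^{\rmi n\alpha}v$. Rotating $v$ by $n\alpha$ changes the number of its entries with negative real part; since this count and its complement are both attained as $\alpha$ varies and the count moves in unit steps, the central value $n/2-1$ is realised, and for such $\alpha$ the point $v_\alpha$ has $0$ in position $n/2$. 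For generic $\alpha$ the $n$ angles $\alpha+2\pi k/n$ are equally spaced and avoid the imaginary axis, so exactly $n/2\ge2$ of them have $\cos>0$; the corresponding rotations $\xi^k f_\alpha$ share the base point $v_\alpha$, have $0$ in position $n/2$, and are distinct for generic roots. Both requirements on $\alpha$ hold on overlapping open dense sets, so a valid $\alpha$ exists. The remaining points to verify are the standard genericity claims, namely distinct nonzero critical values and the absence of accidental rotational symmetry of the root configuration, which hold for all but finitely many choices of the real roots.
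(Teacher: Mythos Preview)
Your argument uses the same core idea as the paper: the $\mathbb{Z}/n\mathbb{Z}$-symmetry of the fibres of $\theta_n$ under multiplication by $\xi=\rme^{2\pi\rmi/n}$, applied to a polynomial with real roots. For odd $n$ your treatment and the paper's coincide.

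For even $n$ you do more work than the paper, and rightly so. The paper simply asserts that if $x$ is an $(n-1)$-tuple of real numbers with $0$ in the $m^{\text{th}}$ position, $m=\lfloor\tfrac{n+1}{2}\rfloor$, then $v=\theta_n(x)$ also has $0$ in the $m^{\text{th}}$ position. As you observe, this fails for even $n$: a monic degree-$n$ real polynomial with $n$ distinct real roots has exactly $n/2$ negative critical values (they alternate in sign), so $v$ sits in position $n/2+1=m+1$, not $m$. The paper's special patch for $n=4$, perturbing the roots to have arguments $\pm\epsilon$, does not repair this either, since a small perturbation does not change the signs of the real parts of the critical values. Your device of pre-rotating the whole configuration by an angle $\alpha$ so that $v_\alpha=\rme^{\rmi n\alpha}v$ lands in position $m$, and then picking the $n/2\geq 2$ rotations $\xi^k f_\alpha$ with $\cos(\alpha+2\pi k/n)>0$, is exactly what is needed to make the lemma go through as stated. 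One small correction: the set of $\alpha$ for which $v_\alpha$ has $0$ in position $m$ is open and nonempty but not dense; this still suffices, since its intersection with the open dense set avoiding the imaginary axis is nonempty.
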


\begin{proof}
Consider an $n-1$-tuple $x$ of real numbers with 0 in $\lfloor\frac{n+1}{2}\rfloor^{\text{th}}$ position. Note that $v=\theta_n(x)$ is also an $n-1$-tuple of real numbers with 0 in $\lfloor\frac{n+1}{2}\rfloor^{\text{th}}$ position. Recall that the preimage set of a point in $V_n$ under $\theta_n$ is invariant under multiplication by $\rme^{2\pi\rmi /n}$. Multiplying $x$ by $\rme^{2\pi\rmi /n}$ thus results in another preimage point of $v$, say $y$. If $n\neq2,4$, then $y$ also has $0$ in $\lfloor\frac{n+1}{2}\rfloor^{\text{th}}$ position and $y$ is different from $x$ because it does not consist of real numbers. For $n=4$ we can apply the same argument to an $n-1$-tuples $x$ of complex numbers that have an argument of $\pm \epsilon$, for some small $\epsilon>0$, and $0$ in $\lfloor\frac{n+1}{2}\rfloor^{\text{th}}$ position.
\end{proof}

\begin{proposition}
\label{prop:hj}
The sequence $H_j$ does not stabilise.
\end{proposition}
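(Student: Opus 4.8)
The plan is to show that the indices $[\mathbb{B}_n:H_j]$ are unbounded, which for a descending chain of finite-index subgroups is equivalent to non-stabilisation; concretely I would exhibit, for every $j$, an element of $H_j\setminus H_{j+1}$, so that $H_{j+1}\subsetneq H_j$ at every level. The starting point is a recursive membership criterion read off from the definition of $\psi_n$. Since the action on $(n^n)^{j+1}$ points is $(n^n)^jk+i\mapsto(n^n)^j\sigma(B_{j,i})(k)+\rho_{n,j}(B)(i)$, a braid $B$ kills this permutation exactly when $\rho_{n,j}(B)=\mathrm{id}$ and every level-$j$ lifted braid $B_{j,i}$ lies in $\ker\sigma=H_1$. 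Hence
$$B\in H_{j+1}\iff B\in H_j\text{ and }B_{j,i}\in H_1\text{ for all }i,$$
and it suffices to produce, for each $j$, a braid $B\in H_j$ for which at least one deep lifted braid $B_{j,i}$ lies outside $H_1$.

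First I would fix the geometric data from Lemma~\ref{apple}: a base point $v\in V_n$ in position $m=\lfloor\frac{n+1}{2}\rfloor$ together with two distinct preimages $x\neq y\in Z_n$, both in position $m$. Working with affine braids (loops in $V_n$ fixing the flagpole, equivalently braids $B$ with $\pi_B(m)=m$), Lemma~\ref{pathequal} grants complete freedom over root-braids: for any braid $C$ with $\pi_C(m)=m$ there is a $Z_n$-path from $x$ to $y$ realising $C$, whose $\theta_n$-image is a loop at $v$ whose $Z_n$-lift at $x$ terminates at $y$. Because this lift genuinely moves $x$ to $y\neq x$, we have $\sigma(C)\neq\mathrm{id}$, so $C\notin H_1$; this is the witness of nontriviality that Lemma~\ref{apple} is there to supply.

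The core of the argument is a lifting recursion that pushes such a nontrivial root-braid to arbitrary depth while trivialising all shallower levels. Given a target braid $\alpha$ fixing position $m$, I would use Lemma~\ref{pathequal} with coincident endpoints to realise $\alpha$ as the root-braid of a genuine loop at $x$, whose $\theta_n$-image is a loop at $v$ with trivial $x$-branch endpoint and root-braid $\alpha$. Passing to the power equal to the order of the resulting level-$1$ permutation forces that whole permutation to become trivial, placing the braid in $H_1$ while replacing the $x$-branch root-braid by a power of $\alpha$; choosing the base braid $C$ so that $\sigma(C)$ has, say, prime order then lets me keep the accumulated exponent coprime to that order, so the surviving deep root-braid stays outside $H_1$. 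Iterating this step $j$ times starting from $C$ produces a braid $\beta_j$ whose lifted-braid sequence is trivial through level $j$ (so $\beta_j\in H_j$) but whose level-$j$ root-braid along the $x$-branch is a nonzero power of $C$, hence $\notin H_1$. By the criterion above $\beta_j\in H_j\setminus H_{j+1}$, so the chain strictly descends at every level and does not \emph{stabilise}.

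The main obstacle is exactly this recursion: one must simultaneously (i) kill the level-$1$ permutation in order to enter $H_1$, which is achieved by taking powers; (ii) keep the surviving deep root-braid outside the normal subgroup $H_1$, which requires controlling the accumulated exponent modulo the order of $\sigma(C)$; and (iii) verify that the inductive hypothesis propagates, namely that forcing all level-$1$ root-braids into $H_{j-1}$ indeed yields membership in $H_j$ (an induction on the recursive criterion, using that the $H_j$ are normal). Carefully tracking how root-braids transform under the lift—using that multiplication by $\rme^{2\pi\rmi/n}$ is a deck transformation of $\theta_n$ relating $x$ and $y$ and commuting with the monodromy—is the technical heart of the proof.
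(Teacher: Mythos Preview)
Your plan uses the right two lemmas but takes a harder route than the paper and has a genuine gap. The recursive criterion you state is correct: $B\in H_{j+1}$ iff $B\in H_1$ and \emph{every} $B_{1,i}\in H_j$. Your recursion, however, only controls the single root-braid at the chosen fibre point $x$: Lemma~\ref{pathequal} lets you prescribe the root-braid of the lift starting at $x$, but says nothing about the lifts starting at the other $n^n-1$ fibre points. So even after raising to a power to force $\sigma(\beta)=\mathrm{id}$, you have no reason to believe the remaining $(\beta)_{1,i}$ lie in $H_j$, and hence no reason to believe $\beta\in H_{j+1}$. This is the step behind your obstacle (iii), and it is not addressed. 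Obstacle (ii), keeping the accumulated exponent coprime to the order of $\sigma(C)$, is also left open: the powers $k_i$ you introduce are orders of uncontrolled permutations in $S_{n^n}$ and may well be divisible by any prime up to $n^n$, so choosing $\sigma(C)$ of prime order does not save you.

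The paper avoids both difficulties by never trying to land inside $H_j$. Instead it shows directly that $[\mathbb{B}_n:H_j]\geq 2^j$. The mechanism is the same pair of lemmas, but used to \emph{double} rather than to refine: starting from the two braids $B_1,B_2$ (coming from an $x\to x$ path and an $x\to y$ path), one applies Lemma~\ref{pathequal} to realise each as a root-braid of both an $x\to x$ and an $x\to y$ path, producing four braids with four distinct level-$2$ permutations; iterating yields $2^j$ braids with pairwise distinct images in $S_{(n^n)^j}$. No powers are taken, and no control over the other fibre points is needed, because distinctness at a single branch already separates images. If you want to rescue your approach, the cleanest fix is to drop the goal of exhibiting an element of $H_j\setminus H_{j+1}$ and instead argue, as the paper does, that the image of $\rho_{n,j}$ has size at least $2^j$.
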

\begin{proof}
Consider the points $x$, $y$ in $Z_n$ given by Lemma \ref{apple}. Since $Z_n$ is path-connected, there is a path from $x$ to $x$ and a path from $x$ to $y$. Applying $\theta_n$ to these paths results in two loops in $V_n$ with base point $v=\theta_n(x)$. The two loops correspond to two braids, say $B_1$ and $B_2$, and $\pi_{B_j}(\lfloor\tfrac{n+1}{2}\rfloor)=\lfloor\tfrac{n+1}{2}\rfloor$ for $j=1,2$. Note that $B_1$ and $B_2$ induce different permutations on $\mathbb{Z}/n^n\mathbb{Z}$. One of them fixes $x$, while the other maps $x$ to $y$.

Since $x$ and $y$ are in $Z_n$ and since they both have 0 in $\lfloor\frac{n+1}{2}\rfloor^{\text{th}}$ position, there is a path in $Z_n$ from $x$ to $y$ that corresponds to a parametrisation of $B_j$,$j=1,2$. The same is true for the existence of a path from $x$ to $x$ corresponding to $B_j$, $j=1,2$. Thus in total we have four paths in $Z_n$, two corresponding to $B_1$ and two to $B_2$. By Lemma \ref{pathequal} there are four paths in $Z_n$, two from $x$ to $x$ and two from $x$ to $y$, corresponding to parametrisations of $B_1$ and $B_2$. Applying $\theta_n$ to these four paths gives four loops in $V_n$ based at $v$, corresponding to four different braids, whose permutation representation fixes $\lfloor\frac{n+1}{2}\rfloor$. Note that by construction all of these four braids induce different permutations on $\mathbb{Z}/(n^n)^2\mathbb{Z}$. This process can be iterated an arbitrary number of times and we obtain $2^j$ different braids that induce different permutations on $\mathbb{Z}/(n^n)^j\mathbb{Z}$. In other words, the index of $H_j$ is bounded below by $2^j$. Exactly like in the proof of Lemma \ref{stabilize} this implies that the sequence $H_j$ does not stabilise.    
\end{proof}

This concludes the proof of Theorem \ref{thm:main} \textit{iii)}.

There are other examples of descending sequences of normal subgroups of $\mathbb{B}_n$ that do not stabilise, for example the congruence subgroups, whose intersection is the Torelli group. It remains to be seen if the results from this section can be improved to prove (or disprove) the faithfulness of the constructed actions.

%Note that the study of the normal subgroups $N_j$, $M_j$ and $H_j$ is closely related to properties of the corresponding actions. In particular, $\varphi_n$ is faithful if and only if the intersection of all subgroups in the sequence is the trivial braid, $\bigcap_{j=1}^{\infty}N_j=\{e\}$. Analogous statements for $\phi_n$ and $\psi_n$ hold. This should be clear from the fact that a braid acts trivially on $\mathbb{Z}_n$ if and only if it is in every $N_j$ ($M_j$ and $H_j$, respectively).

Again, the results from this section remain true, when one considers the alternative embedding of $Z_n$ into $V_n$, given by the map that sends a polynomial to its critical points. %The spaces $\hat{V}_n^j$ that form the tower of covering spaces in this setting are path-connected, so that the analogues of $\varphi_n$ and $\phi_n$ are transitive and the analogues of $N_j$ and $M_j$ do not stabilize.

\section{Constructions of real algebraic links}
\label{sec:realalg}
%%%%%%%%%%

%We say a polynomial $p:\mathbb{R}^4\to\mathbb{R}^2$ is \textit{semiholomorphic} if it can be written as a polynomial in complex variables $u$, $v$ and the complex conjugate $\overline{v}$. The question above is related to the problem of constructing fibrations as semiholomorphic polynomials, which can be stated as follows. For which fibred links $L$ can we explicitly construct a semiholomorphic polynomial $p:\mathbb{R}^{4}\to\mathbb{R}^2$ such that $p^{-1}(0)\cap S^3=L$ and $\arg p|_{S^3\backslash L}$ is a fibration? This question is clearly motivated by results by Milnor on isolated singularities of complex plane curves \cite{milnor}.

%Suppose the given parametrisation $(v_1(t),v_2(t),\ldots,v_{n-1}(t))$ is such that $\tfrac{\partial \arg v_j}{\partial t}(h)\neq 0$ for all $j=1,2,\ldots,n-1$ and all $h\in[0,1]$. Then any polynomial $f(z,t)=f_t(z)$ with the critical values equal to $v_j(t)$ gives a fibration of the braid complement via $\arg f|_{\mathbb{C}\times[0,1]\backslash f^{-1}(0)}$. In \cite{bode:2016lemniscate, bode:2016polynomial} we show that in this case we can construct a polynomial $p$ as above. In fact, $p$ is semiholomorphic.

%%%%%%%%%%%%%

We can use the computations from Section \ref{sec:comps} to construct \textit{real algebraic links} in $S^3$. We use this term in the sense of Perron \cite{perron} as the real analogues of Milnor's algebraic links, links of isolated critical points of polynomials $f:\mathbb{R}^4\to\mathbb{R}^2$. This should not be confused with knotted algebraic varieties in $\mathbb{RP}^3$ as they were introduced by Viro \cite{viro}, which are also called real algebraic links.

\begin{definition}
\label{def:ral}
A link $L$ is real algebraic if there exists a polynomial $p:\mathbb{R}^4\to\mathbb{R}^2$ such that
\begin{itemize}
\item $p$ has an isolated singularity at the origin, i.e., $p(0)=0$, $\nabla p(0)=0$ and there is a neighbourhood $U$ of $0$ such that $0$ is the only point in $U$ where the rank of $\nabla p$ is not full,
\item $p^{-1}(0)\cap S_{\rho}^3=L$ for all small enough radii $\rho$. 
\end{itemize}
\end{definition} 

The number $0$ in Definition \ref{def:ral} refers to the origin in $\mathbb{R}^4$ and $\mathbb{R}^2$ and the zero matrix of size 2-by-4, respectively.

Milnor showed that all real algebraic links are fibred \cite{milnor}, but in contrast to the algebraic links the real algebraic links are not classified yet. Benedetti and Shiota conjectured that all fibred links are real algebraic links \cite{benedetti}. So far however, the set of links that are known to be real algebraic is still comparatively small. 
\begin{remark}
\label{rem:list}
To our knowledge the following list covers all links that are known to be real algebraic.
\begin{itemize}
\item All algebraic links, i.e. links of isolated singularities of complex polynomials: These are certain iterated cables of torus links.
\item If $f, g:\mathbb{C}^2\to\mathbb{C}$ are complex polynomials with isolated singularities at the origin and $L_{f\cup g}=fg^{-1}(0)\cap S^3_{\rho}$ is fibred, then $f\overline{g}$ has an isolated singularity with $L_{f\cup g}=L_{f\cup \overline{g}}$ as the link of the singularity \cite{pichon}.

\item \textit{Odd} fibred links: These are links that can be parametrised in $S^3$ such that they and the fibration map are invariant under the antipodal map on $S^3\subset\mathbb{R}^4$; $i(x_1,x_2,x_3,x_4)=(-x_1,-x_2,-x_3,-x_4)$ \cite{looijenga}. This includes $K\# K$ if $K$ is a fibred knot.

\item Closures of squares of homogeneous braids \cite{bode:real} (cf. Definition \ref{def:homo}). This includes the figure-eight knot, which has been shown to be real algebraic by Perron \cite{perron} and Rudolph \cite{rudolph:isolated} before.  
\end{itemize}
\end{remark}
%There are of course the algebraic links, but also socalled \textit{odd} fibered links, which includes the connected sum $K\# K$ of any fibred knot $K$ with itself \cite{looijenga}. Perron \cite{perron} and Rudolph \cite{rudolph:isolated} constructed polynomials for the figure-eight knot and Pichon showed that certain unions of algebraic links are real algebraic \cite{pichon}. 

\begin{definition}
\label{def:homo}
A braid $B$ on $s$ strands is called homogeneous if for every $i=1,2,\ldots,s-1$ the generator $\sigma_i$ appears in the word $B$ if and only if $\sigma_i^{-1}$ does not appear.
\end{definition}
We give a brief summary of the proof in \cite{bode:real} and explain how the computations from the previous sections allow us to generalise this result.

The proof in \cite{bode:real} can be summarised as follows. %Recall from Section \ref{sec:background} and in particular Proposition \ref{rudolph} that for every homogeneous braid $B$ there is a loop in the space of monic polynomials of degree $n$ with distinct roots $f_t\subset C_n$ such that the roots of $f(\cdot,t)=f_t(\cdot)$ form the braid $B$ and $\arg f|_{(\mathbb{C}\times [0,2\pi])\backslash B}\to S^1$ is a fibration. We can approximate the strands of $B$, given by the roots of $f_t$ as parametric curves, by trigonometric polynomials $F_C$, $G_C:[0,2\pi]\to\mathbb{R}$ for every component $C$ of the closure of $B$ such that
Let $B$ be a braid on $n$ strands. For every component $C$ of the closure of $B$ let $F_C$, $G_C:[0,2\pi]\to\mathbb{R}$ be trigonometric polynomials, such that
\begin{equation}
\label{eq:gpara}
\bigcup_C\bigcup_{j=1}^{n_C}\left(F_C\left(\frac{t+2\pi j}{n_C}\right),G_C\left(\frac{t+2\pi j}{n_C}\right),t\right),\quad t\in[0,2\pi]
\end{equation}
is a parametrisation of $B$, where $n_C$ denotes the number of strands in the component $C$ of the closure of $B$. %Furthermore, since trigonometric polynomials are dense in the set of periodic functions with respect to the $C^1$-norm, we can do this approximation such that $\arg g_\lambda$ is a fibration for all $\lambda>0$, where
We now define
\begin{equation}
\label{eq:defg}
g_\lambda:\mathbb{C}\times[0,2\pi]\to \mathbb{C},\quad g_\lambda(u,t)=\prod_{C}\prod_{j=1}^{n_C}\left(u-\lambda F_C\left(\frac{t+2\pi j}{n_C}\right)-\rmi \lambda G_C\left(\frac{t+2\pi j}{n_C}\right)\right).
\end{equation}
Note that the roots of $g_\lambda$ are precisely the parametrisation of $B$ given by Equation (\ref{eq:gpara}) scaled by the factor $\lambda$.

%Changing the variable $t$ to $2t$ results in a family of polynomials whose roots form the braid $B^2$ and $\arg g_{\lambda}|_{(\mathbb{C}\times [0,2\pi])\backslash B^2}\to S^1$ is a fibration. 
We then define $p_{\lambda,k}:\mathbb{C}^2\to\mathbb{C}$
\begin{align}
\label{eq:polysing}
\tilde{p}_{\lambda,k}((u,r\rme^{\rmi t}))&=r^{2nk}g_{\lambda}\left(\frac{u}{r^{2k}},t\right),\\
\tilde{p}_{\lambda,k}((u,0))&=u^n
\end{align}
where $n$ is the number of strands of $B$ and $k$ a sufficiently large integer. Then $\tilde{p}_{\lambda,k}$ is a polynomial in $u$, $v$, $\overline{v}$ and $\sqrt{v\overline{v}}$. Changing the variable from $t$ to $2t$ results in 
\begin{align}
\label{eq:polysing}
p_{\lambda,k}((u,r\rme^{\rmi t}))&=r^{2nk}g_{\lambda}\left(\frac{u}{r^{2k}},2t\right),\\
p_{\lambda,k}((u,0))&=u^n,
\end{align}
which can be written as a complex polynomial in $u$, $v$ and $\overline{v}$ and is thus a polynomial map $\mathbb{R}^4\to\mathbb{R}^2$. If $\lambda$ is chosen sufficiently small, the vanishing set of $p_{\lambda,k}$ intersects each 3-sphere $S^3_{\rho}$ of radius $\rho\leq1$ in the closure of $B^2$.

Furthermore, $p_{\lambda,k}$ has an isolated singularity if and only if $g_{\lambda}$ has no argument-critical points, which is equivalent to the non-vanishing of $\tfrac{\rmd \arg(v_i(t))}{\rmd t}$ for all $t\in[0,2\pi]$ and all $i=1,2,\ldots,n-1$, where $v_j(t)$, $i=1,2,\ldots,n-1$ are the critical values of the complex polynomial $g_{\lambda}(u,t)$. This inequality has the nice geometric interpretation that for all $j$ the critical value $v_j$ always twists around 0 with the same orientation, either always clockwise or always counterclockwise. We showed in \cite{bode:2016lemniscate, bode:2016polynomial} that homogeneous braids can be parametrised as the roots of $g_{\lambda}$ such that the condition $\tfrac{\rmd \arg(v_i(t))}{\rmd t}\neq 0$ for all $t\in[0,2\pi]$ and all $i=1,2,\ldots,n-1$ is satisfied. For a more recent and complete treatment of the idea for this proof we point the reader to\cite{bode:thesis, bode:braids}. It follows that closures of squares of homogeneous braids are real algebraic. 

%The condition on $\tfrac{\rmd \arg(v_i(t))}{\rmd t}\neq 0$ is equivalent to $g_{\lambda}$ not having any argument-critical points, since $g_{\lambda}$ is holomorphic in $u$.

%The polynomial has an isolated singularity at the origin and the closure of $B^2$ as the link of that singularity if $\lambda$ is chosen sufficiently small (cf. \cite{bode:real}). The fact that $\arg g_\lambda$ is a fibration ensures that the singularity is isolated, while the 2-periodicity of the braid makes $p$ into a polynomial.

If we want to generalise this construction, it suffices to find other braids $B$ that can be parametrised as in Equation (\ref{eq:gpara}) such that the resulting polynomial $g_\lambda$ in Equation (\ref{eq:defg}) has no argument-critical points, i.e., $\arg g_\lambda$ is a fibration. Then the closure of $B^2$ is again real algebraic. This can be summarised in the context of the covering map $\theta_n:Z_n\to V_n$ as follows.
\begin{proposition}
\label{prop:summary}
Let $v(t)=(v_1(t),v_2(t),\ldots,v_{n-1}(t))$, $t\in[0,2\pi]$, be a loop in $V_n$ such that $\tfrac{\partial \arg(v_i(t))}{\partial t}\neq0$ for all $i=1,2,\ldots,n-1$ and all $t\in[0,2\pi]$ and such that one of the lifts of $v(t)$ is a loop $\tilde{\gamma}$ in $Z_n$. We denote the braid that is traced out by the roots of $\tilde{\gamma}$ by $B$. Then the closure of $B^2$ is real algebraic.
\end{proposition}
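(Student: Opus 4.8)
The plan is to reduce the statement to the construction of \cite{bode:real} summarised above, so that the only genuinely new input is the translation of the two hypotheses on $v(t)$ into the language of that construction. First I would read the lift $\tilde\gamma$ as a loop of polynomials. Since a point of $Z_n$ is by definition a monic degree-$n$ polynomial with distinct roots, distinct critical values and vanishing constant term, the loop $\tilde\gamma$ is literally a family $f_t$, $t\in[0,2\pi]$, of such polynomials with $f_0=f_{2\pi}$. By the definition of $B$ in the statement, the roots of $f_t$ trace out $B$; and because $\tilde\gamma$ is a lift of $v(t)$ under $\theta_n$, the critical values of $f_t$ are exactly $(v_1(t),\ldots,v_{n-1}(t))$. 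Thus the hypothesis $\tfrac{\partial\arg(v_i(t))}{\partial t}\neq 0$ says precisely that $f_t$ has no argument-critical points, which is the condition that made $p_{\lambda,k}$ in Equation (\ref{eq:polysing}) carry an isolated singularity. In particular the constant term $0$ is a root throughout, so the flagpole is automatically part of the root-parametrisation.

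The second step is to replace $f_t$ by a family $g_\lambda$ of the form required by Equation (\ref{eq:defg}), that is, one whose roots are given by trigonometric polynomials in $t$ grouped according to the components $C$ of the closure of $B$ as in Equation (\ref{eq:gpara}). This passage is necessary because the recipe producing $p_{\lambda,k}$ yields a genuine polynomial map $\mathbb{R}^4\to\mathbb{R}^2$ only when the coefficients are trigonometric polynomials in $t$, so that the substitution $v=r\rme^{\rmi t}$ together with the homogenising factors $r^{2k}$ turns them into polynomials in $v$ and $\overline v$. Once such a $g_\lambda$ with the no-argument-critical-point property is in hand, I would apply the construction of \cite{bode:real} verbatim: forming $p_{\lambda,k}$ via the substitution $t\mapsto 2t$ for a sufficiently large $k$ and a sufficiently small $\lambda$ gives a polynomial with an isolated singularity whose link is the closure of $B^2$. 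The doubling $t\mapsto 2t$ is harmless for the hypothesis, since $\tfrac{\rmd}{\rmd t}\arg(v_i(2t))=2\,\tfrac{\rmd}{\rmd s}\arg(v_i(s))\big|_{s=2t}\neq 0$, so the isolated-singularity criterion survives.

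The main obstacle is exactly this trigonometric approximation: I must guarantee that $g_\lambda$ simultaneously (a) has roots isotopic to those of $f_t$, so that it still traces out $B$ and the associated link is the closure of $B^2$, and (b) still satisfies $\tfrac{\partial\arg(v_i(t))}{\partial t}\neq 0$ for its critical values. Both properties are open. Being isotopic to $B$ is preserved under any perturbation keeping the roots pairwise distinct on $[0,2\pi]$, and the roots of $f_t$ are distinct with a uniform gap by compactness of $[0,2\pi]$. The nonvanishing of the argument derivatives of the critical values is a $C^1$-open condition which, by hypothesis and compactness, holds with a uniform lower bound; since the critical values depend continuously — indeed smoothly, away from the degenerate configurations excluded in $Z_n$ — on the root-parametrisation in the $C^1$ topology, a sufficiently fine trigonometric approximation of the coefficients of $f_t$ keeps the critical values $C^1$-close to $v(t)$ and preserves both conditions. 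Such approximations exist by density of trigonometric polynomials in $C^1(S^1)$. Verifying this continuous dependence in the appropriate topology, and arranging the approximation so that the constant term stays $0$ (keeping the flagpole fixed), are the points that require care but present no essential difficulty.
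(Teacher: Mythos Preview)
Your proposal is correct and follows essentially the same route as the paper. In the paper, Proposition~\ref{prop:summary} is not given a separate proof; it is stated as a reformulation, in the language of the covering $\theta_n:Z_n\to V_n$, of the construction from \cite{bode:real} summarised in the paragraphs immediately preceding it, with details deferred to \cite{bode:thesis, bode:braids}. Your write-up makes explicit the one step the paper leaves implicit, namely the $C^1$-approximation of the root-parametrisation of $\tilde\gamma$ by trigonometric polynomials so that Equation~(\ref{eq:defg}) applies, together with the observation that both the braid type and the condition $\tfrac{\partial\arg v_i}{\partial t}\neq 0$ are $C^1$-open and hence survive the approximation.
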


%In the case of homogeneous braids we found that this is possible by finding a family of braids that can be parametrised by $(0,v_1(t),v_2(t),\ldots,v_{n-1}(t))$ such that $\tfrac{\partial  \arg v_j}{\partial t}$ never vanishes and identifying $\prod_{j=1}^k \sigma_{i_j}^{\epsilon_{i_j}}$ as one of its lifts in $C_n$ \cite{bode:real}. 
We now focus on the case of $n=3$ and use the computations from Section \ref{sec:comps} to prove Theorem \ref{thm:real}.

\begin{proof}[Proof of Theorem \ref{thm:real}]
The computations of Section \ref{sec:comps} tell us all lifts of any 3-strand braid. We consider the lifts of the following affine braids, all of which can parametrised as $(0,v_1(t),v_2(t),\ldots,v_{n-1}(t))$ such that for all $i$ we have $\tfrac{\partial\arg (v_i(t))}{\partial t}>0$ for all $t$ or $\tfrac{\partial \arg(v_i(t))}{\partial t}=0$ for all $t$:
\begin{itemize}
\item $\beta_1=\sigma_1^2$,
\item $\beta_2=\sigma_2^{-1}\sigma_1^2\sigma_2$,
\item $\beta_3=\sigma_1^2\sigma_2$,
\item $\beta_4=\sigma_1^2\sigma_2^2$,
\item $\beta_5=\sigma_2\sigma_1^2\sigma_2$.
\end{itemize} 
Figure \ref{fig:figfig} depicts a parametrisation of the desired form for each of these braids.

We focus on the lifts of each $\beta_i$ that starts at $z_1$ (cf. Section \ref{sec:comps}). In general, this is not a loop in $Z_3$, but from the previous computations we can easily find the smallest power of each $\beta_i$ for which it is. We thus obtain the following five braids whose lift that starts at $z_1$ is a loop:
\begin{itemize}
\item $\beta_{1}=\sigma_1^2$,
\item $\beta_{2}^2=(\sigma_2^{-1}\sigma_1^2\sigma_2)^2$,
\item $\beta_3^6=(\sigma_1^2\sigma_2)^6$,
\item $\beta_4^6=(\sigma_1^2\sigma_2^2)^6$,
\item $\beta_5^3=(\sigma_2\sigma_1^2\sigma_2)^3$.
\end{itemize}

We now concatenate these braids in such a way that every non-zero strand moves at some point, i.e., for all $i$ we have $\tfrac{\partial v_i}{\partial t}(h)\neq0$ for some $h$. This means that we either need to use $\beta_3^6$ (because it moves both non-zero strands) or at least one of $\beta_1$ and $\beta_4^6$ (which move one strand) and at least one of $\beta_2^2$ and $\beta_5^3$ (which move the other strand). Every braid that results from such a concatenation can be deformed slightly so that the parametrisation satisfies the conditions on $\gamma$ in Proposition \ref{prop:summary}. Therefore the closure of the square of the braid that corresponds to the lift at $z_1$ is real algebraic.

The relevant lifts are given by:
\begin{itemize}
\item $w_1=\beta_{1_{1,1}}=\sigma_2$,
\item $w_2=(\beta_2^2)_{1,1}=\sigma_1^2$,
\item $w_3=(\beta_3^6)_{1,1}=(\sigma_1\sigma_2\sigma_1)^2$,
\item $w_4=(\beta_4^6)_{1,1}=(\sigma_2\sigma_1\sigma_2^{-1}\sigma_1\sigma_2)^2$,
\item $w_5=(\beta_5^3)_{1,1}=\sigma_2^{-1}\sigma_1\sigma_2^2\sigma_1$.
\end{itemize}

This means that the square of any concatenation of the $w_i$s, $B=\prod_{j=1}^{\ell}w_{i_j}$ (subject to the extra condition on the $i_j$s), closes to a real algebraic link by Proposition \ref{prop:summary}. The condition on the $i_j$s in the statement of the theorem is a consequence of the earlier remark on the requirement that every non-zero strand of the braid of critical values has to move at some point.
The equivalent statement for $\epsilon=-1$ follows from the same argument applied to the inverses of the $\beta_i$s.\end{proof}

%Some of these are only paths in $V_3$ and not loops and are therefore of no use for us. However, we find that the lift of $\sigma_1^4$ that starts at $z_{10}$ is a loop and forms the braid $(\sigma_1^4)_{1,10}=\sigma_1^2$. Furthermore, the lift of $Y_2=\sigma_2^{-1}\sigma_1^2\sigma_2$ that starts at $z_{10}$ is also a loop and forms the braid $\sigma_2^{-1}\sigma_1\sigma_2$. Tus the lift of $\prod_{j=1}^k \sigma_1^{4k_j}Y_2^{l_j}$ that starts at $z_{10}$ is a loop in $V_3$ and the corresponding braid is $B=\prod_{j=1}^k \sigma_1^{2k_j}\sigma_2^{-1}\sigma_1^{l_j}\sigma_2$.

%Note that $\prod_{j=1}^k \sigma_1^{4k_j}Y_2^{l_j}$ can be parametrised as desired by $(0,v_1(t),v_2(t))$ such that $\tfrac{\partial \arg v_j}{\partial t}$ never vanishes as long as all $k_j$ have the same sign and all $l_j$ have the same sign. It can for example be arranged that each $v_j$ moves on an ellipse as in Figure \ref{fig:ellipse}, where the direction of movement (clockwise or anti-clockwise) is determined by the sign of $k_j$ (for $v_1$) and the sign of $l_j$ (for $v_2$). Thus the arguments from \cite{bode:real} apply and hence the closure of $B^2$ is real algebraic.

\begin{figure}
\centering
\labellist
\large
\pinlabel a) at 0 270
\pinlabel b) at 0 -30
\pinlabel c) at 0 -320
\pinlabel d) at 0 -650
\pinlabel e) at 0 -970
\endlabellist
\includegraphics[height=3.5cm]{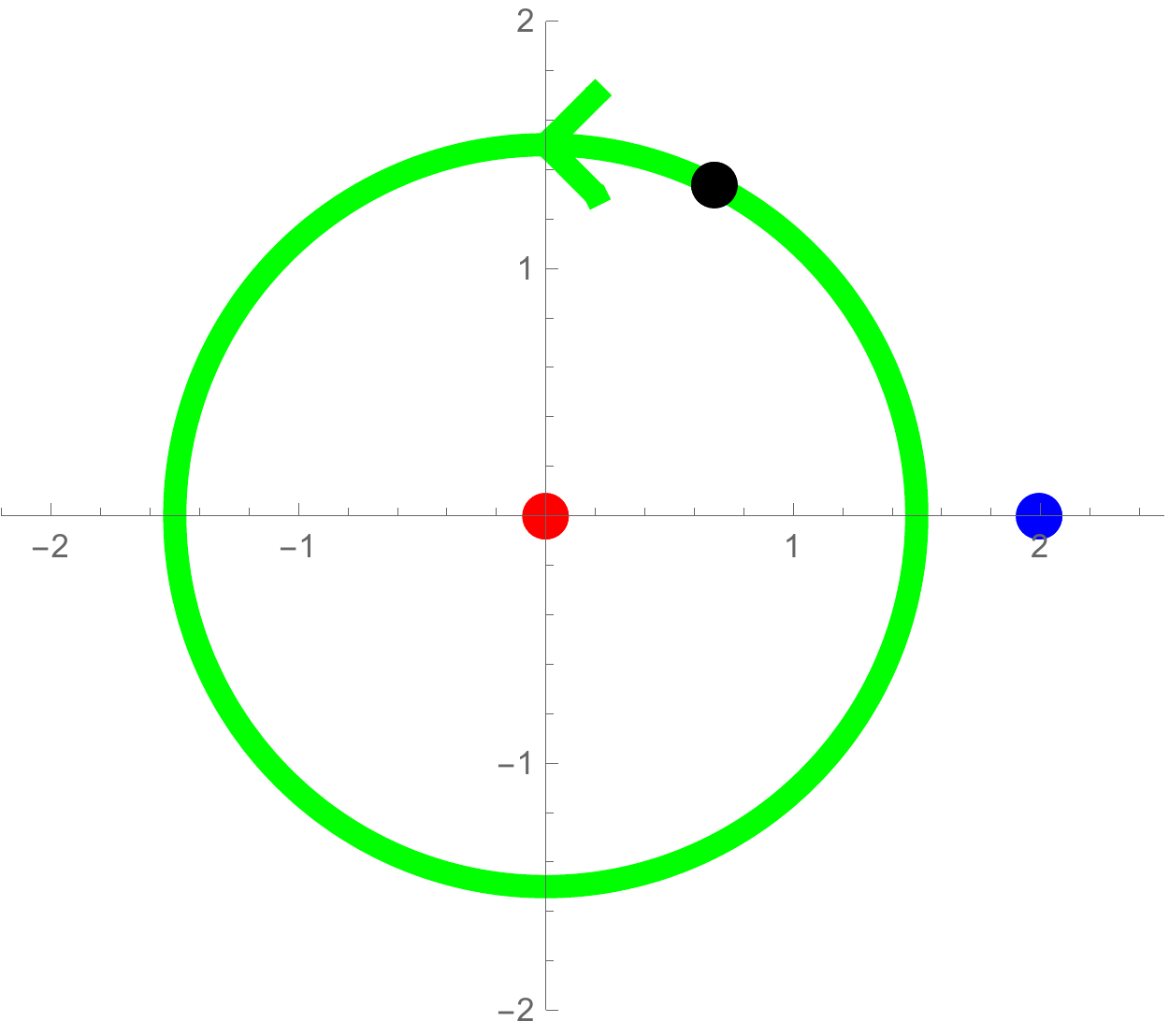}\qquad
\includegraphics[height=3.5cm]{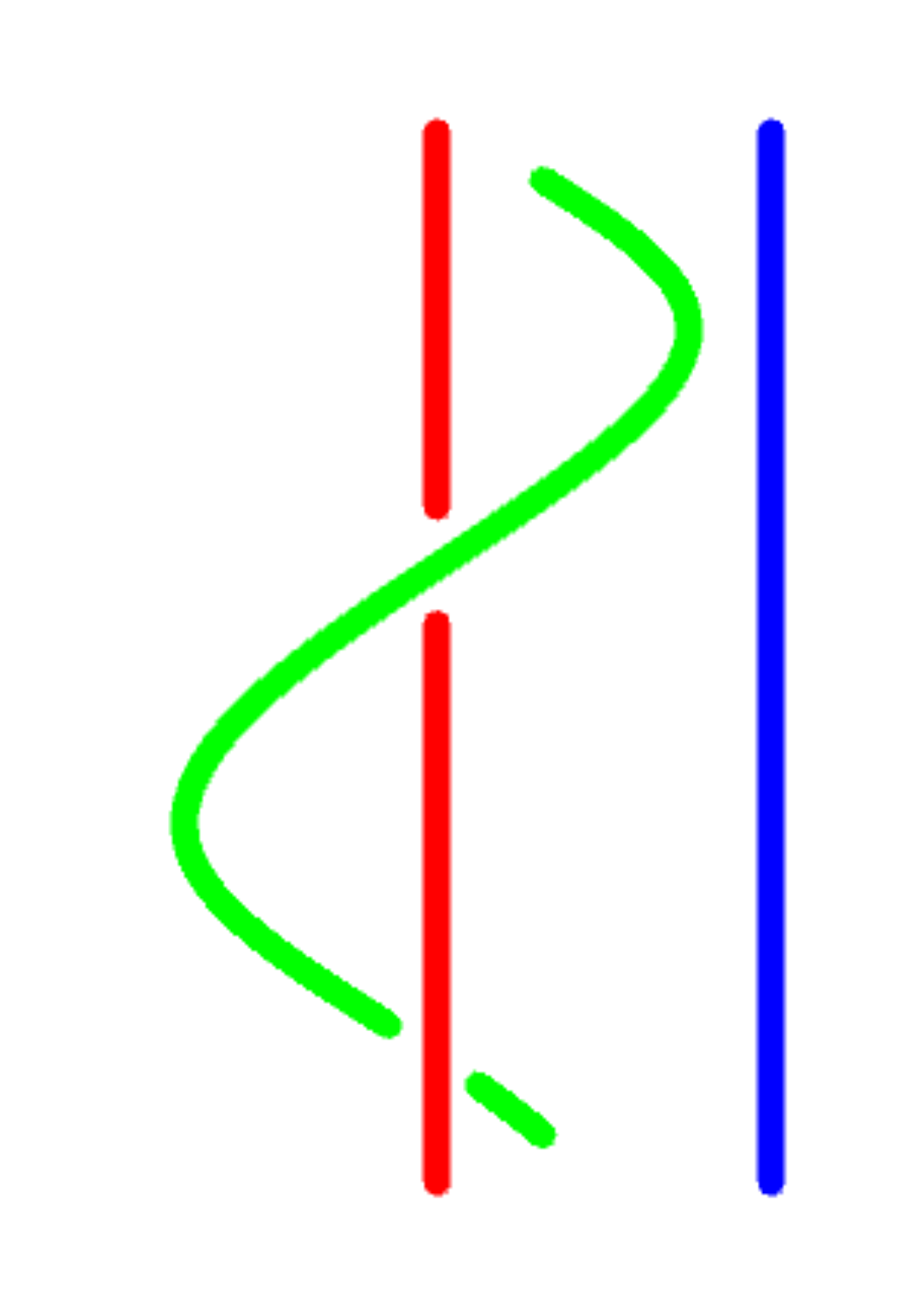}\\
\includegraphics[height=3.5cm]{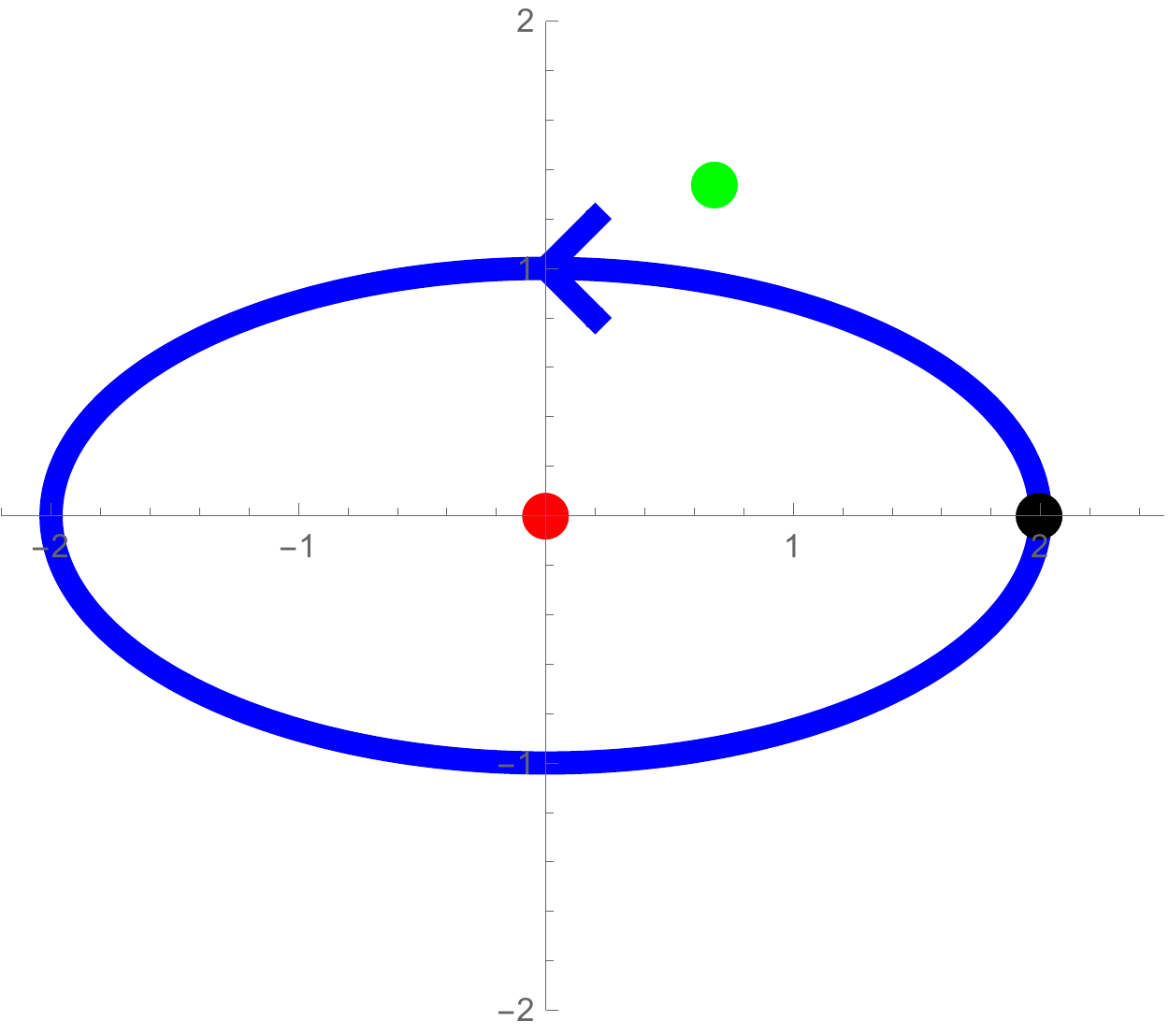}\qquad
\includegraphics[height=3.5cm]{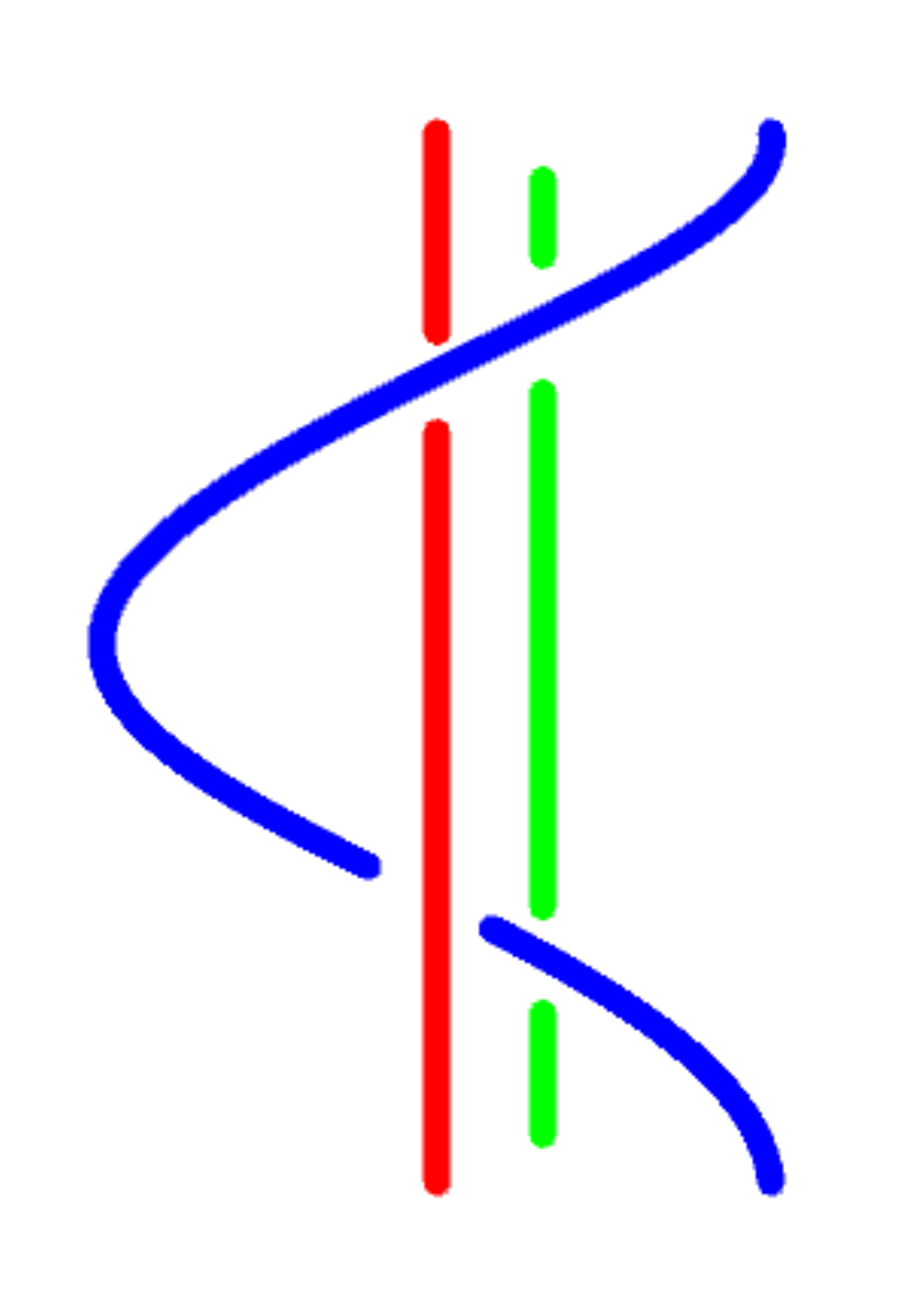}\\
\includegraphics[height=3.5cm]{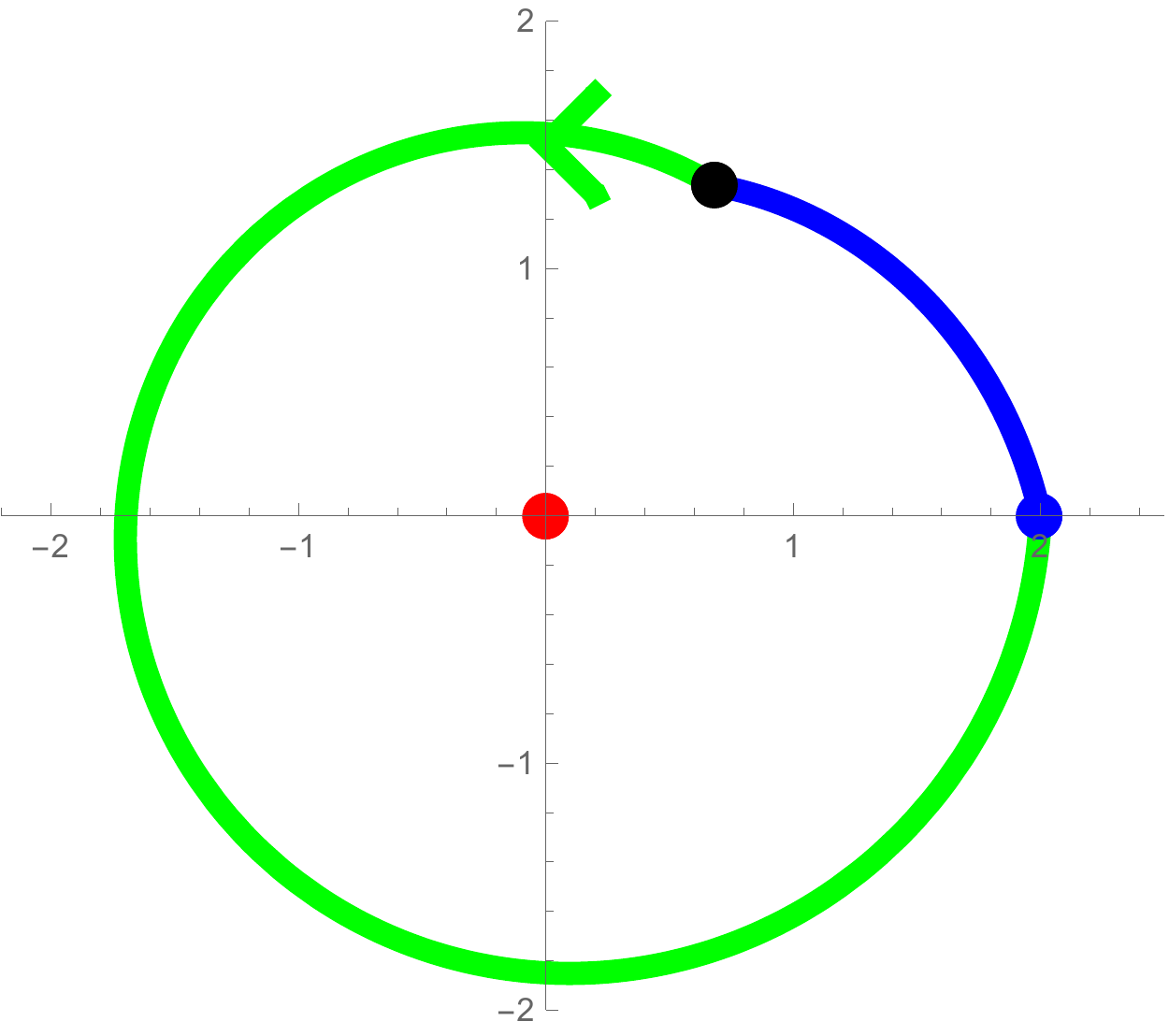}\qquad
\includegraphics[height=3.5cm]{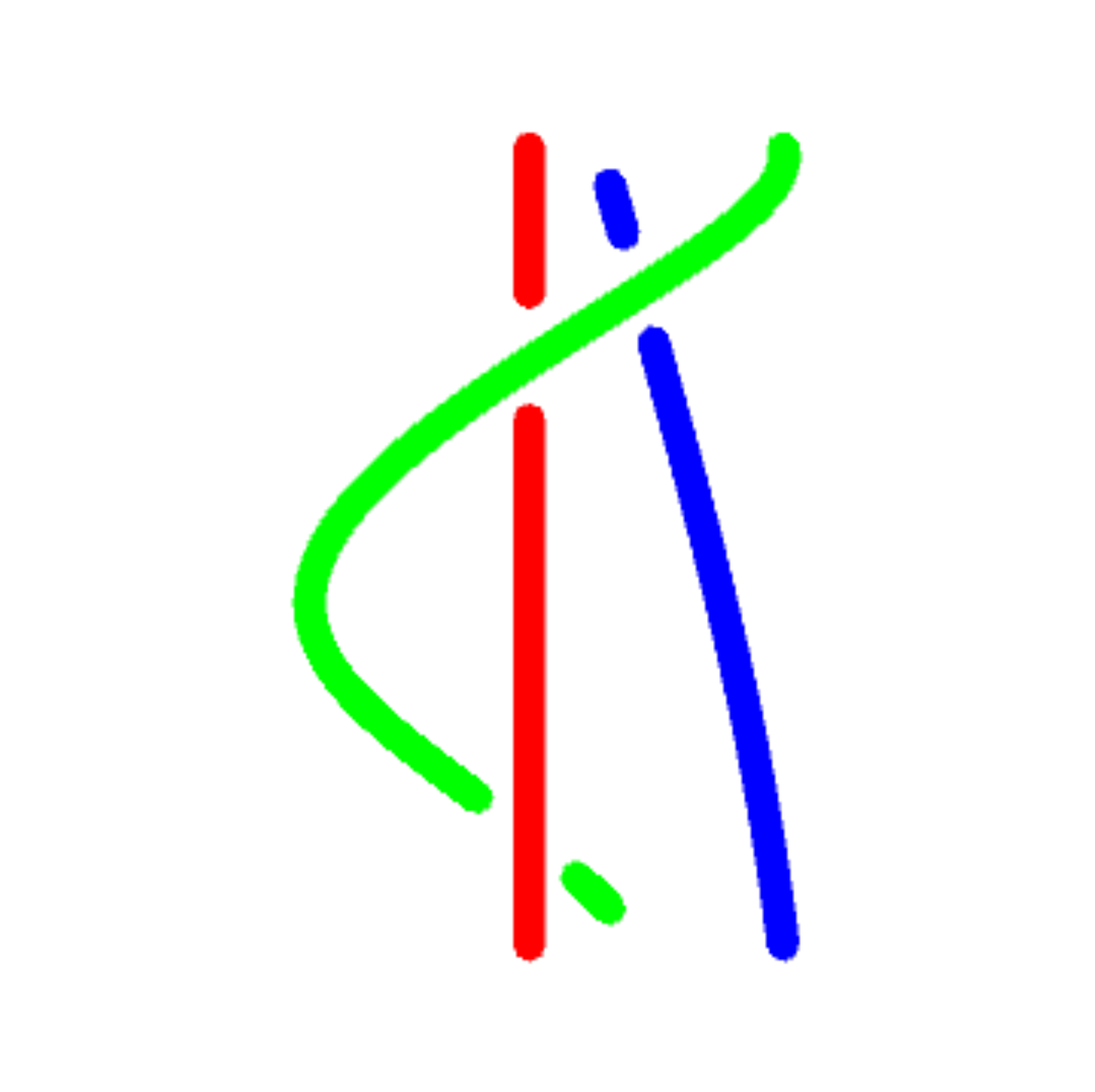}\\
\includegraphics[height=3.5cm]{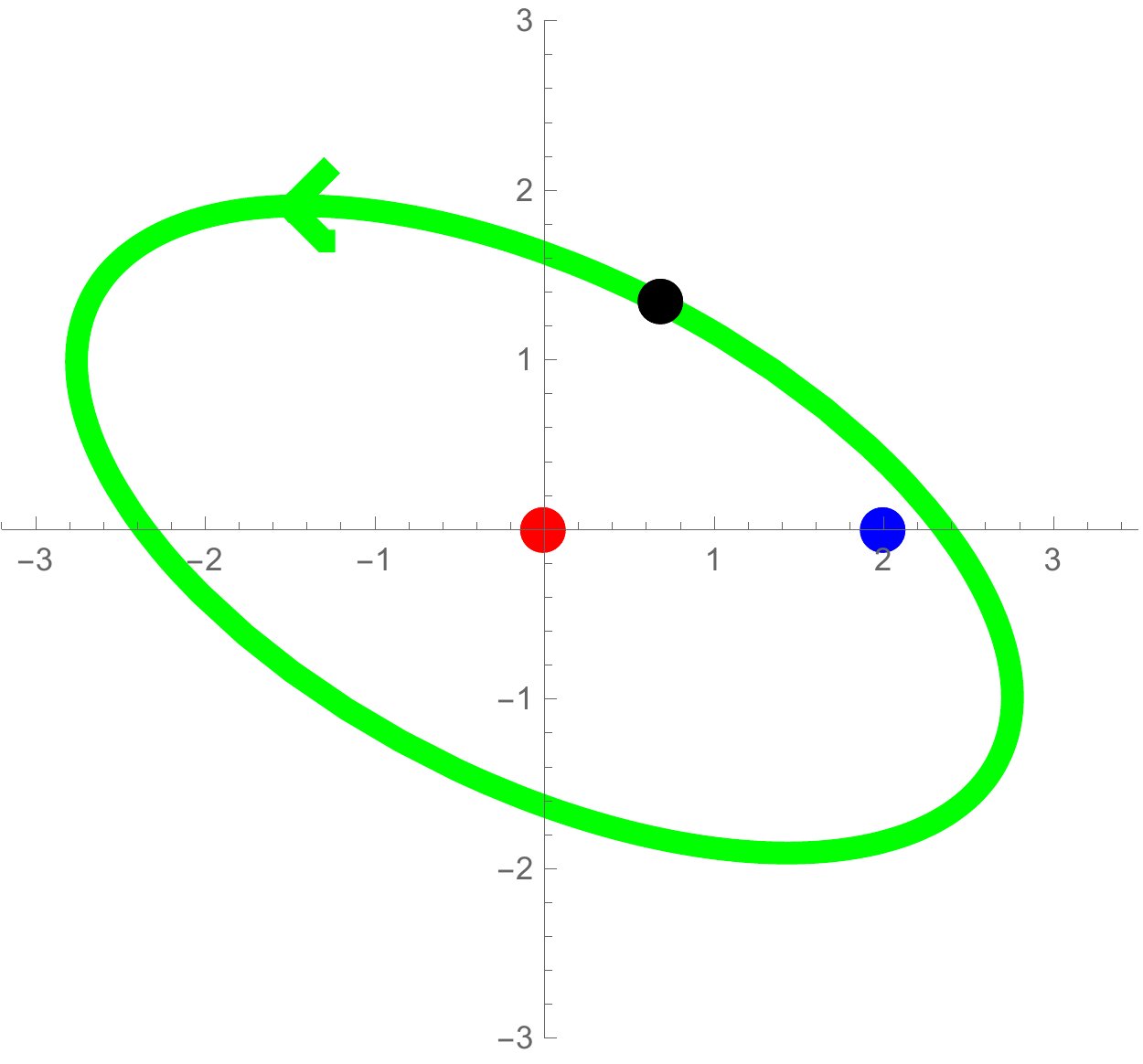}\qquad
\includegraphics[height=3.5cm]{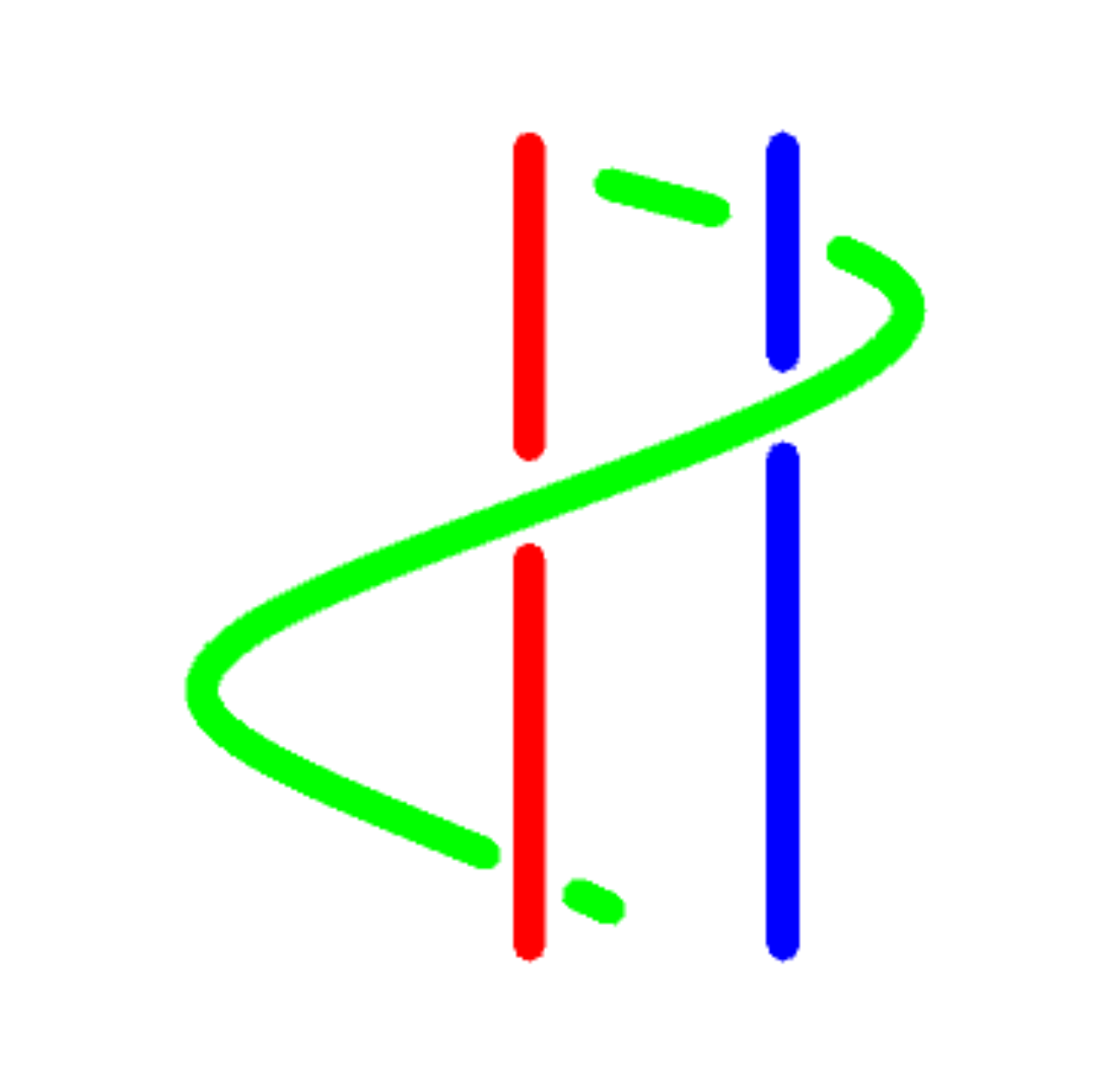}\\
\includegraphics[height=3.5cm]{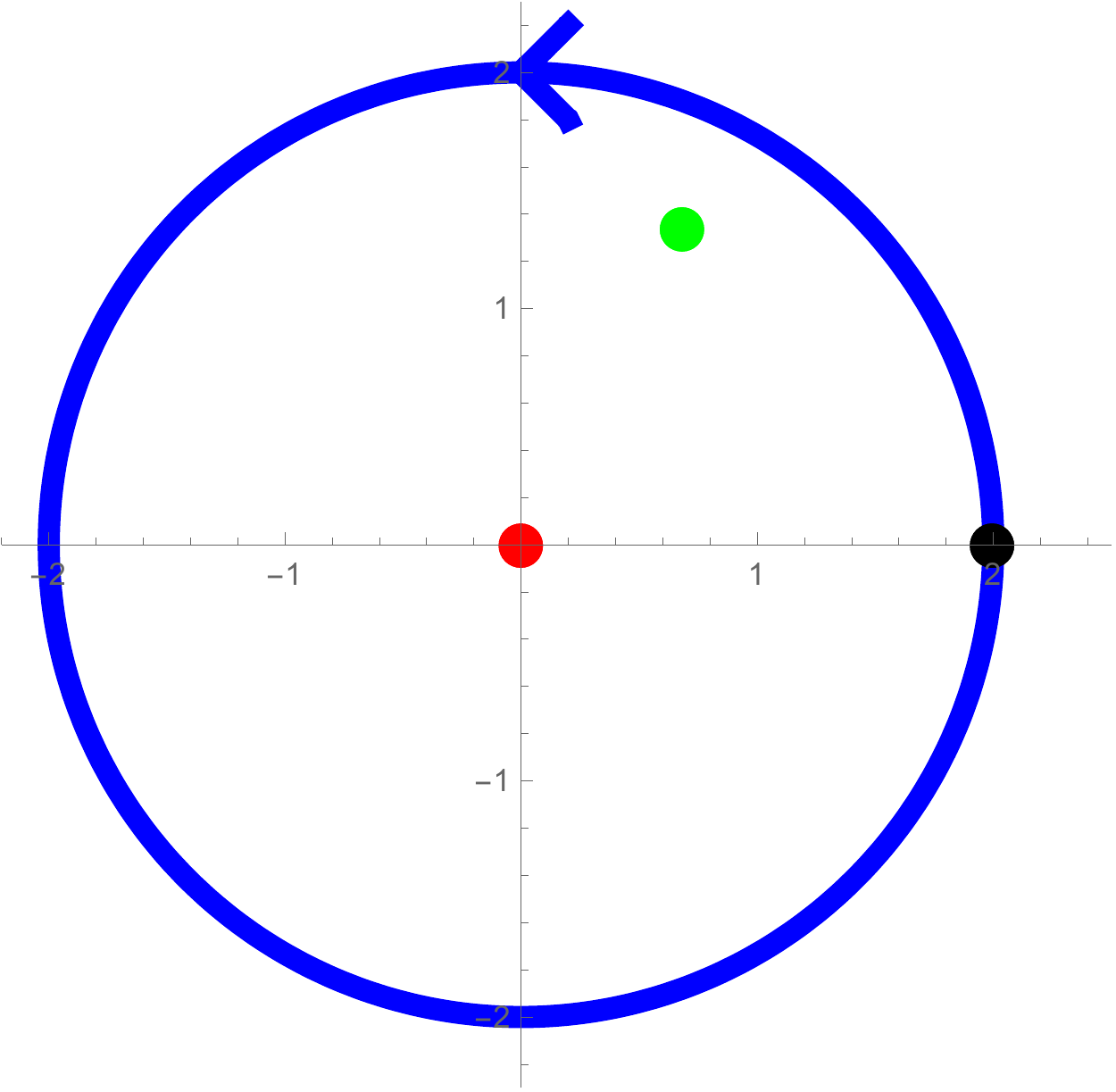}\qquad
\includegraphics[height=3.5cm]{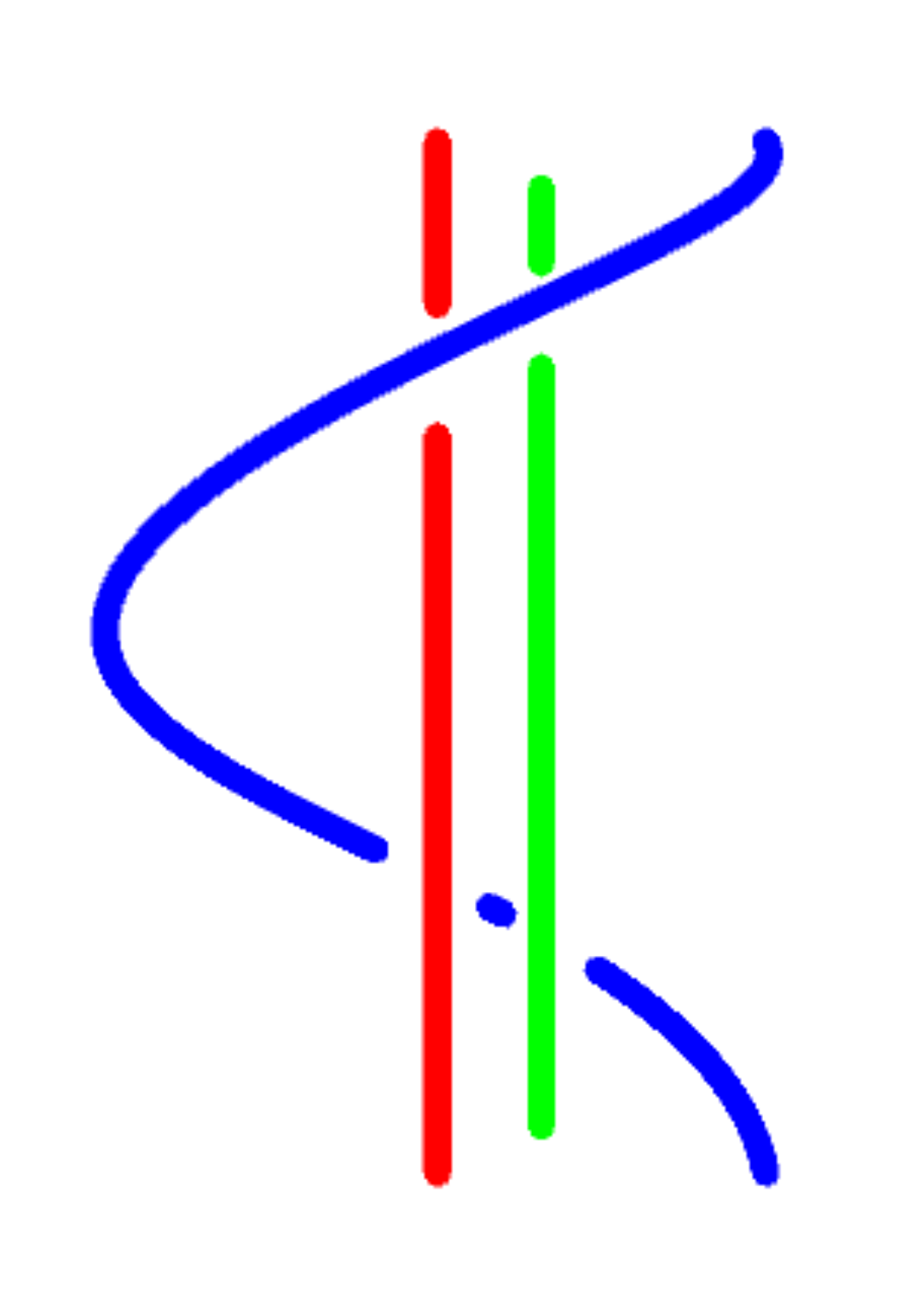}
\caption{Braid parametrisations of the $\beta_i$s depicted as movements of points in the complex plane and as the projection on the $xz$-plane resulting in braid diagrams. a) $\beta_1$. b) $\beta_2$. c) $\beta_3$. d) $\beta_4$. e) $\beta_5$. \label{fig:figfig}}
\end{figure}

Theorem \ref{thm:real} only considers the special case where $\tfrac{\partial \arg v_i(t)}{\partial t}$ has the same sign for all $i$ and $t$, while in general it suffices to have $\tfrac{\partial \arg v_i(t)}{\partial t}\neq0$, i.e., we could have that $\text{sign}\left(\tfrac{\partial \arg v_i(t)}{\partial t}\right)=-\text{sign}\left(\tfrac{\partial \arg v_j(t)}{\partial t}\right)$ for different $i$ and $j$. It also only considers the lifts that start at $z_1$. There should be 26 other similar families of real algebraic links corresponding to the lifts that start at the other $z_i$s.

%We expect by no means that the braids in Proposition \ref{thm:real} are the only ones that can be shown to have real algebraic closures with this method. 

A more systematic study of which affine braids have a parametrisation without turning points and that have a loop as one of their lifts is an ongoing project.

\subsection{The closure of $((w_5^{-2}w_1^{-1})^2w_2^{-1})^2$}
\begin{theorem}
\label{thm:newnew}
Let $B=(w_5^{-2}w_1^{-1})^2w_2^{-1})^2=((\sigma_1^{-1}\sigma_2^{-2}\sigma_1^{-1}\sigma_2\sigma_1^{-1}\sigma_2^{-2}\sigma_1^{-1})^2\sigma_1^{-2}$. Then the closure of $B^2$ is real algebraic, but is not in any of the families that have already been known to be real algebraic (as listed in Remark \ref{rem:list}). 
\end{theorem}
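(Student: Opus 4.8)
The plan splits along the two assertions, of which only the second is substantive.

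For real algebraicity I would simply invoke Theorem~\ref{thm:real}. Expanding $B=((w_5^{-2}w_1^{-1})^2w_2^{-1})^2$ as a negative word $\prod_j w_{i_j}^{-1}$, the index sequence is $(5,5,1,5,5,1,2,5,5,1,5,5,1,2)$, so the set of indices occurring is $\{1,2,5\}$. Reducing modulo $3$ these hit the residues $\{1,2\}$ and miss exactly the class of $0$ (the class of the index $3$). Hence $B$ satisfies the hypotheses of Theorem~\ref{thm:real} with $\epsilon=-1$: no $i_j=3$, and there is exactly one missed residue class; equivalently both non-zero strands of the critical-value braid move, since $w_1$ moves one strand and $w_2,w_5$ move the other. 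By Proposition~\ref{prop:summary} the closure of $B^2$ is therefore real algebraic, and this half is immediate.

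The real content is that $L:=\widehat{B^2}$ lies in none of the families of Remark~\ref{rem:list}. First I would record the basic data of $L$ from the explicit $3$-braid word: its permutation is trivial, so $B^2$ is a pure braid and $L$ has three components, and I would compute the pairwise linking numbers, the one-variable and multivariable Alexander polynomials, the signature, and — since $L$ is fibered — the Euler characteristic of its fiber. The one-variable Alexander data is accessible through the reduced Burau matrix of $B^2$, the multivariable data through the Gassner/Torres route, and the fiber Euler characteristic through a Seifert surface of the closed braid. These invariants are the tools used to eliminate the families one at a time. Eliminating the complex-algebraic and Pichon--Weber families should be the routine part: links of isolated singularities of holomorphic maps are iterated torus links, so their components are iterated torus knots, all pairwise linking numbers are positive and prescribed by the Eisenbud--Neumann splice diagram, and the multivariable Alexander polynomial factors accordingly; I would exhibit a mismatch (a non-positive or vanishing linking number, a component that is not an iterated torus knot, or an Alexander polynomial of the wrong shape). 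The Pichon--Weber links $L_{f\cup g}$ are unions of two such algebraic sublinks, so it suffices to check that no partition of the three components of $L$ into two sublinks yields two algebraic links, again via sublink invariants.

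The two hard exclusions are the odd links and the closures of squares of homogeneous braids, and the latter is the main obstacle. For odd links I would use the defining antipodal symmetry: an odd fibered link is invariant under the free orientation-preserving involution of $S^3$, and I would argue that the triple of component knot types together with the linking matrix of $L$ admits no such equivariant symmetry. The genuinely delicate step is ruling out \emph{every} closure of a square of a homogeneous braid, since this is an infinite, unclassified family. Here I would exploit fiberedness: the fiber of the closure of a homogeneous $m$-braid with $c$ bands is its Bennequin surface, of Euler characteristic $m-c$, and for a square one needs $c$ even with a repeated word. Fixing the fiber Euler characteristic and the number of components of $L$ constrains the admissible pairs $(m,c)$; after reducing to the essential, non-stabilised representatives this should leave only finitely many candidate homogeneous squares, whose closures can be compared directly against the invariants of $L$. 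Making this finiteness genuinely rigorous — controlling stabilisation and certifying that no homogeneous square realises $L$'s invariants — is the step I expect to require the most care, and a homogeneity-sensitive invariant, or an appeal to the specific (likely hyperbolic) geometric type of $L$, may ultimately be the cleanest route.
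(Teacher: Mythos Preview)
Your first paragraph (real algebraicity via Theorem~\ref{thm:real}) is correct and is exactly what the paper does.

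For the exclusions, however, your plan diverges from the paper's, and in two of the three cases your route is either vague or genuinely incomplete, while the paper has short direct arguments.

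\textbf{Algebraic and $f\bar g$ links.} You propose to match $L$ against the structural features of iterated torus links (positive linking, splice-diagram Alexander shape, etc.). The paper instead proves in one stroke that $L$ is not quasipositive: the braid index of $L$ is $3$, so by Hayden a quasipositive representative would be a quasipositive pure $3$-braid, whose exponent sum equals twice the total linking number and is hence positive; but the total linking number of $L$ is $-16$. Since both algebraic links and the $L_{f\cup g}$ links of Pichon are quasipositive (the latter are transverse $\mathbb{C}$-links), this disposes of both families simultaneously. Your approach could be made to work, but it is strictly more laborious.

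\textbf{Odd links.} Saying that ``the triple of component knot types together with the linking matrix admits no equivariant symmetry'' is not yet an argument: nothing prevents the antipodal involution from fixing each component setwise, and indeed the paper observes that since the three pairwise linking numbers are distinct it \emph{must} do so. The paper then invokes Hartley's criterion for freely $2$-periodic links, which forces a factorisation $\Delta_1(L)(t_1t_2t_3,t_2,t_3)\doteq f(t_1,t_2,t_3)f(-t_1,t_2,t_3)$ of the multivariable Alexander polynomial, and checks by computation that this fails. You should replace your symmetry sketch by this concrete obstruction.

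\textbf{Closures of squares of homogeneous braids.} This is where your proposal has a real gap, which you yourself flag. Your finiteness reduction (``fixing $\chi$ and the number of components constrains $(m,c)$'') does not bound $m$ without further input, and the subsequent case analysis is left open. The paper avoids all of this with a single inequality. Bell's result gives, for a homogeneous braid closure, $\deg\nabla(L)=m-n+1$ and $n\le \deg\nabla(L)+1$; combined with the obvious lower bound $m\ge 2\sum_{i<j}|\mathrm{lk}(L_i,L_j)|+n-k$ on the word length one gets
\[
\deg\nabla(L)\ \ge\ 2\sum_{i<j}|\mathrm{lk}(L_i,L_j)|-k+1.
\]
For $L$ this reads $32\ge 2(10+8+2)-3+1=38$, a contradiction. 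So $L$ is not the closure of \emph{any} homogeneous braid, in particular not of a square of one. This replaces your entire finiteness-and-search programme by a two-line computation.
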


We establish this result through a sequence of lemmas.
\begin{lemma}
The closure $L$ of the braid $((\sigma_1^{-1}\sigma_2^{-2}\sigma_1^{-1}\sigma_2\sigma_1^{-1}\sigma_2^{-2}\sigma_1^{-1})^2\sigma_1^{-2})^2$ is not quasipositive.
\end{lemma}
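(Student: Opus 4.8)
The plan is to rule out quasipositivity by an elementary HOMFLY-degree argument that combines the Morton--Franks--Williams inequality with the Euler characteristic of a quasipositive surface, so that no gauge-theoretic slice-genus input is required. The only feature of the braid that enters is its exponent sum. Reading off the word, the block $\sigma_1^{-1}\sigma_2^{-2}\sigma_1^{-1}\sigma_2\sigma_1^{-1}\sigma_2^{-2}\sigma_1^{-1}$ has exponent sum $-7$, so its square followed by $\sigma_1^{-2}$ has exponent sum $-16$, and the outer square yields a $3$-braid $\beta$ with exponent sum $e=-32$ whose closure is $L$. Applying the Morton--Franks--Williams inequality to this representative bounds the framing degrees of the HOMFLY polynomial of $L$ from above: $\max\deg_v P_L \le e+n-1 = -30$.

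Next I would confront this with the hypothesis of quasipositivity. Assume $L=\widehat{\gamma}$ for some quasipositive braid $\gamma=\prod_{j=1}^{k} w_j\sigma_{i_j}w_j^{-1}$ on $m$ strands. Conjugation leaves exponent sums unchanged, so $\gamma$ has exponent sum exactly $k$. Rudolph's associated quasipositive surface is assembled from $m$ disks and $k$ bands, hence has Euler characteristic $m-k$; since each of its connected pieces contains at least one of the disks and therefore has non-empty boundary, each such piece has Euler characteristic at most $1$, and there are at most $\mu$ of them, where $\mu$ is the number of components of $L$. This gives $m-k \le \mu$, and as $L$ is the closure of a $3$-braid we have $\mu\le 3$. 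The Morton--Franks--Williams lower bound applied to $\gamma$ then yields $\min\deg_v P_L \ge k-m+1 \ge -\mu+1 \ge -2$.

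Since $\min\deg_v P_L \le \max\deg_v P_L$, the two estimates combine to the impossible inequality $-2 \le -30$, so no quasipositive representative can exist and $L$ is not quasipositive. I expect the delicate points to be bookkeeping rather than conceptual: fixing a single normalisation of the HOMFLY polynomial so that both Morton--Franks--Williams bounds carry consistent signs, and verifying the elementary surface inequality $m-k\le\mu$ for possibly disconnected quasipositive surfaces. The gap between $-2$ and $-30$ is so large that the argument is insensitive to the exact value of $\mu$, and in particular it requires neither an explicit evaluation of $P_L$ nor the computations of Section \ref{sec:comps}; the exponent sum of $\beta$ alone forces the conclusion. As a sanity check one can phrase the same obstruction transversally: a quasipositive link admits a transverse representative whose self-linking number equals $-\chi_4(L)\ge -\mu$, a value that the overwhelmingly negative braid $\beta$ cannot approach.
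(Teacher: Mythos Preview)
Your argument is correct and takes a genuinely different route from the paper's. The paper argues as follows: since $L$ has three components and is presented on three strands, its braid index is $3$; by Hayden's theorem a quasipositive link admits a quasipositive representative at minimal braid index, so $L$ would be the closure of a quasipositive \emph{pure} $3$-braid; but for a pure braid the exponent sum equals twice the total linking number, which here is $-16$, contradicting positivity of the exponent sum of a quasipositive word.

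Your proof replaces the appeal to Hayden's minimal-braid-index theorem by the Morton--Franks--Williams inequality together with the elementary Euler-characteristic bound $m-k\le\mu$ for Rudolph's band surface. Both arguments ultimately exploit the same numerical fact---the overwhelmingly negative exponent sum of the given $3$-braid---but yours is more self-contained: it needs neither the braid-index computation nor the (comparatively recent) result of Hayden, and it would go through for any braid representative with sufficiently negative writhe. The paper's route, on the other hand, is slightly more concrete in that it reads the obstruction off the linking numbers directly, without invoking HOMFLY at all. One small point of phrasing: the reason each connected piece of the quasipositive surface has non-empty boundary is not merely that it ``contains at least one of the disks'' but that a surface built only from $0$- and $1$-handles cannot be closed; you might want to say this explicitly.
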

\begin{proof}
Suppose that $L$ were quasipositive. The braid index of $L$ is 3, since the braid above has three strands and $L$ consists of 3 components. Then by \cite{hayden} it is the closure of a quasipositive braid on three strands. Since $L$ has 3 components, this braid must be a pure braid. The sum of the linking numbers of the components is $-10-8+2=-16$, which is independent of the link diagram representing $L$. For a pure braid the sum of the linking numbers is half of the exponent sum of the braid, which for a quasipositive braid is positive. Hence $L$ is not quasipositive.  
\end{proof}
It follows that $L$ is not algebraic, since algebraic links are quasipositive. Furthermore, since for complex polynomials $f, g$ with isolated singularities the link of the singularity of $f\overline{g}$ is isotopic to $L_{fg}=fg^{-1}(0)\cap S_{\epsilon}^3$, links of this form are transverse $\mathbb{C}$-links and hence quasipositive too. Thus $L$ cannot be shown to be real algebraic using Pichon's result.

\begin{lemma}
The closure $L$ of the braid $((\sigma_1^{-1}\sigma_2^{-2}\sigma_1^{-1}\sigma_2\sigma_1^{-1}\sigma_2^{-2}\sigma_1^{-1})^2\sigma_1^{-2})^2$ is not odd.
\end{lemma}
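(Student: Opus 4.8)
The plan is to use the pairwise linking numbers, which the previous lemma computes to be $-10$, $-8$ and $2$ for the three components $C_1,C_2,C_3$ of $L$, together with the defining symmetry of an odd link. If $L$ were odd, then by definition there is a free, orientation-preserving involution $i$ of $S^3$ (the antipodal map, which is free and orientation-preserving on $S^3$) with $i(L)=L$ and with the Milnor fibration carried to itself. I would exploit this involution in two stages: first to pin down how $i$ permutes the components, and then to extract a parity obstruction on the linking numbers of the invariant components.

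First I would record that $i$ induces a permutation $\sigma$ of the three components of $L$, and that, being a homeomorphism of $S^3$, it satisfies $|lk(C_{\sigma(a)},C_{\sigma(b)})|=|lk(C_a,C_b)|$ for all $a,b$. Since the three linking numbers have pairwise distinct absolute values $10$, $8$ and $2$, the permutation $\sigma$ can only be the identity. Hence every component $C_k$ is invariant under $i$, and $i|_{C_k}$ is a fixed-point-free involution of a circle, i.e.\ (up to conjugacy) the antipodal rotation, which is orientation-preserving; in particular $i_*[C_k]=[C_k]$.

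The heart of the argument is the following parity lemma: if two distinct components $C_j,C_k$ are both invariant under a free orientation-preserving involution $i$ of $S^3$, then $lk(C_j,C_k)$ is odd. To prove it I would pick a Seifert surface $\Sigma_j$ for $C_j$; because $i_*[C_j]=[C_j]$, the $2$-chain $\widehat\Sigma=\Sigma_j-i(\Sigma_j)$ is a cycle and hence bounds a $3$-chain $W$ in $S^3$. Writing $C_k=\gamma\cup i(\gamma)$ for an arc $\gamma$ from a point $x_0$ to $i(x_0)$ and reducing everything modulo $2$, a standard intersection-theoretic Stokes computation gives
$$ lk(C_j,C_k)\equiv \#\bigl(\gamma\cap(\Sigma_j+i(\Sigma_j))\bigr)\equiv W(x_0)+W(i(x_0)) \pmod 2, $$
where $W(\cdot)$ denotes the mod-$2$ multiplicity of $W$. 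The orientation identity $i(\widehat\Sigma)=-\widehat\Sigma$ shows that $i$ interchanges the two sides of the closed surface $\Sigma_j\cup_{C_j}i(\Sigma_j)$, i.e.\ $i(W)=S^3\setminus W$ modulo $2$; therefore $W(i(x_0))=W(x_0)+1$ and $lk(C_j,C_k)\equiv 1\pmod 2$.

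Combining the two stages, all three pairwise linking numbers of $L$ would have to be odd, contradicting the computed values $-10$, $-8$, $2$, all of which are even. This contradiction shows that $L$ is not odd. The main obstacle is the parity lemma, and within it the verification that $i$ swaps the two complementary regions of $\widehat\Sigma$; the orientation bookkeeping, namely that $i$ preserves the orientation of each invariant circle, forces the minus sign in $\widehat\Sigma=\Sigma_j-i(\Sigma_j)$ and hence the identity $i(\widehat\Sigma)=-\widehat\Sigma$, which is exactly what makes the parity come out odd rather than even. As a consistency check, the odd link $T(3,3)$, the link of the odd polynomial $z^3+w^3$, has all three pairwise linking numbers equal to $1$, in agreement with the lemma.
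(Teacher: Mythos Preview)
Your route is quite different from the paper's and considerably more elementary. The paper argues, as you do, that the distinct linking numbers force the antipodal map to fix each component setwise, but then invokes Hartley's factorisation constraint on the multivariable Alexander polynomial of a freely $2$-periodic link and obtains the contradiction from a Mathematica computation of $\Delta_1(L)$. You bypass both the Alexander polynomial and the computer by extracting a parity obstruction directly from the linking numbers.

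The parity lemma you state is correct---it is essentially the non-triviality of the torsion linking form on $\mathbb{RP}^3=S^3/i$---but your sketch has a real gap at exactly the step you flag. From $i(\widehat\Sigma)=-\widehat\Sigma$ you obtain $i_*W+W=m[S^3]$ for some integer $m$, and your Stokes computation correctly gives $lk(C_j,C_k)\equiv m\pmod 2$; the problem is showing $m$ is odd. Your justification (``$i$ interchanges the two sides of $\Sigma_j\cup_{C_j}i(\Sigma_j)$'') is only valid when that union is an \emph{embedded} closed surface, i.e.\ when $\mathrm{int}\,\Sigma_j$ and $i(\mathrm{int}\,\Sigma_j)$ are disjoint; you have not arranged this, and for a $2$-cycle that is merely immersed there are no ``two sides'' to swap. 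The orientation identity $i(\widehat\Sigma)=-\widehat\Sigma$ alone only yields $\partial(i_*W+W)=0$, which you already knew. A clean fix is to pass to the quotient: the images $\bar C_j,\bar C_k\subset\mathbb{RP}^3$ both represent the generator of $H_1(\mathbb{RP}^3;\mathbb{Z})\cong\mathbb{Z}/2$; if $F$ is an integral $2$-chain with $\partial F=2\bar C_k$ then $p^{-1}(F)$ bounds $2C_k$ in $S^3$, and counting sheets gives $lk_{S^3}(C_j,C_k)=\bar C_j\cdot F$, which is odd because the torsion linking form of the lens space $L(2,1)=\mathbb{RP}^3$ takes the value $\tfrac12$ on the generator. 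With this repair your argument is complete and gives a pleasant computation-free alternative to the paper's proof.
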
  
\begin{proof}
Suppose that $L$ were invariant under the antipodal map 
\begin{equation}
i:(x_1,x_2,x_3,x_4)\mapsto(-x_1,-x_2,-x_3,-x_4).
\end{equation} 
Since the pairwise linking numbers are different for each pair of components, the map $i$ has to fix the three components of $L$ setwise. Odd links are \textit{freely periodic} links of period 2 (cf. \cite{kawauchi} chapter 10.2). Hartley \cite{hartley} studied properties of the Alexander polynomial of freely periodic knots and his result generalizes to the multivariably Alexander polynomial $\Delta_1$ of links if the periodic map fixes all components setwise as is the case here. For a freely periodic 3-component link with period 2 states that there exists a polynomial in $\mathbb{Z}[t_1^{\pm1/2},t_2^{\pm1/2},t_3^{\pm1/2}]$ such that up to multiplication by units
\begin{equation}
\Delta_1(L)(t_1 t_2 t_3, t_2,t_3)=f(t_1,t_2,t_3)\times f(-t_1,t_2,t_3)
\end{equation}
and analogous statements for the other components \cite{hillman}. Computing $\Delta_1(L)$ using the KnotTheory package \cite{katlas} in Mathematica and factorising it shows that this condition is not satisfied. Hence $L$ is not freely periodic and therefore not odd.
\end{proof}
This shows that $L$ cannot shown to be real algebraic using Looijenga's construction.

\begin{lemma}
\label{lem:degcon}
Let $B$ be a homogeneous braid and $L=\cup_{i=1}^k L_i$ be its closure, where each $L_i$ is a knot. Then
\begin{equation}
\deg\nabla(L)\geq 2\sum_{i<j}|\text{lk}(L_i,L_j)|-k+1,
\end{equation} 
where $\nabla(L)$ is the Conway polynomial of $L$ and $\text{lk}(L_i,L_j)$ is the linking number of $L_i$ with $L_j$.
\end{lemma}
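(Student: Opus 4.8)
The plan is to combine Stallings' theorem on the fibredness of homogeneous braid closures with the classical identity between the Conway polynomial and the first Betti number of the fibre, and then to bound that Betti number below in terms of the linking numbers by a crossing-counting argument.

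First I would recall that, by Stallings, the closure $L$ of a homogeneous braid $B$ is a fibred link whose fibre is the canonical Seifert surface $F$ produced by Seifert's algorithm; in particular $F$ has minimal genus. Writing $B$ on $n$ strands with $c$ crossings (letters), the surface $F$ consists of $n$ disks joined by $c$ bands, so $\chi(F)=n-c$ and $b_1(F)=c-n+1$, provided every generator $\sigma_i$ occurs, so that $F$ is connected (equivalently $L$ is non-split). I would make this a standing assumption, the complementary split case being degenerate since then $\nabla_L=0$. As $L$ is fibred, its Alexander polynomial is, up to units, the characteristic polynomial of the monodromy acting on $H_1(F)$, hence monic of degree $b_1(F)$; passing through the substitution $z=t^{1/2}-t^{-1/2}$ then gives $\deg_z\nabla_L=b_1(F)=c-n+1$ \emph{exactly}. (For a non-fibred link only $\deg\nabla_L\le b_1$ holds in general, which is precisely why homogeneity is essential here.)

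With the degree pinned down, the statement reduces to the purely combinatorial inequality
\begin{equation}
2\sum_{i<j}|\text{lk}(L_i,L_j)|\le c-(n-k).
\end{equation}
To obtain this I would split the $c$ crossings into self-crossings (both strands in the same component) and inter-component crossings. Since $\text{lk}(L_i,L_j)$ is half the signed count of crossings between $L_i$ and $L_j$, we have $2|\text{lk}(L_i,L_j)|\le(\text{number of crossings between }L_i\text{ and }L_j)$, so $2\sum_{i<j}|\text{lk}(L_i,L_j)|$ is at most the total number of inter-component crossings, namely $c$ minus the number of self-crossings. It therefore suffices to show that the number of self-crossings is at least $n-k$.

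The heart of the argument, and the step I expect to be the main obstacle, is this self-crossing bound. I would prove it one component at a time. For a component $K_c$ occupying $m_c$ of the strands (so $\sum_c m_c=n$ with $k$ components), deleting all other strands from $B$ leaves a sub-braid $B_{K_c}$ on $m_c$ strands whose crossings are exactly the self-crossings of $K_c$ and whose closure is the knot $K_c$. Because $K_c$ is connected, the permutation of $B_{K_c}$ is a single $m_c$-cycle; hence no generator can be absent, since an absent generator $\sigma_p$ would leave the positions $\{1,\dots,p\}$ invariant and force at least two cycles. Consequently the Seifert graph of $B_{K_c}$ is connected on $m_c$ vertices and has at least $m_c-1$ edges, so $K_c$ contributes at least $m_c-1$ self-crossings. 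Summing over the $k$ components yields at least $\sum_c(m_c-1)=n-k$ self-crossings. Combining this with the displayed inequality and the degree formula gives $\deg\nabla(L)=c-n+1\ge 2\sum_{i<j}|\text{lk}(L_i,L_j)|-k+1$, as claimed.
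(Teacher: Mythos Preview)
Your proof is correct and follows essentially the same approach as the paper. Both arguments establish the exact identity $\deg\nabla(L)=c-n+1$ (the paper cites Bell for this, while you derive it from Stallings' fibredness theorem and the monodromy interpretation of the Alexander polynomial), and then both bound the crossing number $c$ from below by $2\sum_{i<j}|\mathrm{lk}(L_i,L_j)|+(n-k)$ via the same decomposition into inter-component and self-crossings; your treatment of the self-crossing bound (each component on $m_c$ strands contributes at least $m_c-1$ self-crossings because its permutation is an $m_c$-cycle) is more explicit than the paper's, which simply asserts it.
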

\begin{proof}
By Bell \cite{bell} it is $m=\deg\nabla (L)+n-1$, where $m$ is the length of the homogeneous braid word $B$ and $n$ is its number of strands and also $n\leq\deg\nabla (L)+1$. The number of crossings between strands from differenct components $L_i$, $L_j$, $i\neq j$ is at least $2|\sum_{i<j}\text{lk}(L_i,L_j)|$ and the number of crossings of strands from the same component is at least $n-k$, where $k$ is the number of components of $L$. Hence $m\geq 2|\sum_{i<j}\text{lk}(L_i,L_j)|+n-k$

Therefore, we find that 
\begin{align}
\deg\nabla (L)&=m-n+1\nonumber\\
&\geq 2|\sum_{i<j}\text{lk}(L_i,L_j)|+n-k-n+1\nonumber\\
&=2\sum_{i<j}|\text{lk}(L_i,L_j)|-k+1.
\end{align}
\end{proof}

\begin{lemma}
The closure $L$ of the braid $((\sigma_1^{-1}\sigma_2^{-2}\sigma_1^{-1}\sigma_2\sigma_1^{-1}\sigma_2^{-2}\sigma_1^{-1})^2\sigma_1^{-2})^2$ is not the closure of a homogeneous braid.
\end{lemma}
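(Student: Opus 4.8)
The plan is to apply the degree bound of Lemma \ref{lem:degcon}, which constrains the Conway polynomial of any homogeneous braid closure in terms of the pairwise linking numbers of its components. Write $L = L_1 \cup L_2 \cup L_3$ for the three knot components of $L$. If $L$ were the closure of a homogeneous braid, then Lemma \ref{lem:degcon} would force
\[
\deg \nabla(L) \geq 2\sum_{i<j}|\text{lk}(L_i,L_j)| - 3 + 1.
\]
The strategy is therefore to evaluate the two sides of this inequality independently and exhibit a contradiction. Since the inequality is an obstruction rather than a construction, it suffices to show that the actual $\deg \nabla(L)$ falls short of the bound that homogeneity would impose.

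First I would record the pairwise linking numbers, which are link invariants and hence independent of any chosen diagram. These were already determined in the proof that $L$ is not quasipositive, where the three pairwise linking numbers were found to be $-10$, $-8$ and $2$ (and were noted to be pairwise distinct in the proof that $L$ is not odd). Consequently $\sum_{i<j}|\text{lk}(L_i,L_j)| = 10 + 8 + 2 = 20$, and a hypothetical homogeneous presentation of $L$ would require
\[
\deg \nabla(L) \geq 2\cdot 20 - 3 + 1 = 38.
\]

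Next I would compute the Conway polynomial $\nabla(L)$ directly, either by extracting it from the multivariable Alexander polynomial already obtained for $L$ via the \texttt{KnotTheory} package, or by computing $\nabla$ from the given braid word and reading off its degree. Checking that this degree is strictly smaller than $38$ then contradicts the displayed inequality, so $L$ cannot be the closure of a homogeneous braid. The only genuine difficulty is computational reliability: the braid word is long, so one must ensure that the machine computation of $\nabla(L)$ (and the consistency check on the linking numbers) is carried out correctly. Conceptually, however, no new idea beyond Lemma \ref{lem:degcon} is needed, and this lemma completes the verification that $L$ avoids each of the four families listed in Remark \ref{rem:list}.
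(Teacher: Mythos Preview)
Your proposal is correct and follows exactly the paper's argument: apply Lemma \ref{lem:degcon} with the pairwise linking numbers $(-10,-8,2)$ to obtain the lower bound $38$, then compare with the actual degree of $\nabla(L)$. The paper simply records that $\deg\nabla(L)=32$, giving the contradiction $32\geq 38$; once you carry out the Conway polynomial computation you indicated, your proof is identical.
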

\begin{proof}
Suppose $L$ were the closure of the square of a homogenous braid. It is 3-component link with pairwise linking numbers $(-10,-8,2)$. The degree of its Conway polynomial is 32 and hence by Lemma \ref{lem:degcon} we have $32\geq 2(10+8+2)-3+1=38$, which is clearly a contradiction
\end{proof}

Since every square of a homogeneous braid is also homogeneous, this shows that $L$ is not in the family shown to be real algebraic in \cite{bode:real}. This finishes the proof of Theorem \ref{thm:newnew}.

%sequence of nested normal subgroups, trivial if and only if faithful action

%transitive if and only if $Z_n^j$ is always path-connected

%lifting sequence of invariants, stabilization, word/conjugacy problem

%\begin{acknowledgements}
%blabla
%\end{acknowledgements}

%\disclaimer{Insert disclaimer text here.}

%%%%%%%%%% Insert bibliography here %%%%%%%%%%%%%%

\end{document}